\documentclass[11 pt]{amsart}
\usepackage{amsmath,amsthm}
\usepackage{mathrsfs}
\usepackage{amsfonts}
\usepackage{graphicx}
\usepackage{amssymb}
\graphicspath{{/texfiles/}}
\usepackage[shortlabels]{enumitem}
\usepackage{array}
\usepackage{wrapfig}
\usepackage{multirow}
\usepackage{tabularx}
\usepackage{tikz,lipsum,lmodern}
\usepackage{tikz-cd}
\usepackage[most]{tcolorbox}
\usepackage[margin=1in,footskip=0.25in]{geometry}
\usepackage{hyperref}
\usepackage{mathtools}
\usepackage{MnSymbol}

\newcommand{\R}{\mathbb{R}}
\newcommand{\C}{\mathbb{C}}
\newcommand{\Ox}{\mathcal{O}}
\newcommand{\bP}{\mathbb{P}}
\newcommand{\Z}{\mathbb{Z}}

\newcommand{\E}{\mathcal{E}}

\newtheorem{thm}{Theorem}[section]

\newtheorem{Proposition}[thm]{Proposition}

\theoremstyle{definition}
\newtheorem{definition}[thm]{Definition}
\newtheorem{lemma}[thm]{Lemma}

\newtheorem{remark}[thm]{Remark}

\newtheorem{theorem}[thm]{Theorem}
\newtheorem{corollary}[thm]{Corollary}

\mathtoolsset{showonlyrefs}

\title{Real line subbundles of real bundles on curves}
\author{Daniel Santiago}
\date{}
\address{Department of Mathematics, Harvard University, Cambridge, MA 02138}
\email{dsantiagoalvarez@math.harvard.edu}

\begin{document}
\begin{abstract}
For a stable real bundle $E$ of rank $2$ and degree $1$ on a real genus $2$ curve, we describe the action of the real structure of the curve on the set of $4$ maximal line subbundles of degree $0$ of $E$. This describes the Galois action on the set of lines through a real point in the moduli space of such bundles, and is a real algebraic extension of classical work of Newstead. Our proof is an application of techniques of Atiyah from the 1950's. We prove also results on real line subbundles in higher genus using work of Lange-Narasimhan. 
\end{abstract}
\maketitle

\section{Introduction}

Let $\Sigma$ be a Riemann surface of genus $g$, or equivalently a smooth projective curve over $\C$. For each choice of nonnegative rank $n$ and integer degree $d$, one has a moduli space $M(n,d)$ of semistable bundles with the corresponding rank and degree over $\Sigma$. There is a determinant map
\[\mathrm{det}: M(n,d) \to \mathrm{Pic}^d(\Sigma)\]
with fiber over an isomorphism class $\Lambda \in \mathrm{Pic}^d(\Sigma)$ given by the moduli space $M_{\Lambda}(n,d)$ of semistable bundles with determinant $\Lambda$. The following problem has been well studied in the literature (e.g \cite{lange1983maximal}, \cite{oxbury2000varieties}).\\

\noindent \textbf{Problem}: For a stable bundle $E \in M_{\Lambda}(2,d)$ of rank $2$ and degree $d$, what is the largest degree of a line subbundle $L \subset E$? How many line subbundles of this maximal degree are there?\\

By stability, the maximal possible degree of a line subbundle of $E \in M_{\Lambda}(2,d)$ is strictly less than $\frac{d}{2}$.  We will be interested in bundles $E$ containing a line subbbundle of the maximal degree $\lfloor \frac{d}{2} \rfloor$ for odd $d$ and $\frac{d}{2}-1$ for even $d$. For $d$ odd, there are only finitely many maximal line subbundles of degree $\lfloor \frac{d}{2} \rfloor$ (see Lemma $4.7$ in \cite{pal2016moduli}). For $d$ even, the analogous statement (see \cite{lange1983maximal}) holds for a general bundle on a curve of genus $g>2$. On a genus $2$ curve, work of Newstead (\cite{newstead1968stable}) implies that any stable bundle $E \in M_{\Lambda}(2,1)$ contains a degree $0$ line subbundle, and generically there are four such subbundles.\newline 

This note is dedicated to the study of maximal line subbundles of real bundles on real algebraic curves. A real curve $(\Sigma,\tau)$ is a Riemann Surface $\Sigma$ equipped with an antiholomorphic involution $\tau: \Sigma \to \Sigma$. The involution $\tau$ induces involutions on the spaces $M(n,d)$ by $E \mapsto \tau^*(\overline{E})$. For $\Lambda \in M(1,d)$ a line bundle fixed by this involution, we obtain a restricted action on $M_{\Lambda}(2,d)$. When $E \in M(n,d)$ is stable, the condition $\tau^*(\overline{E}) \cong E$ is equivalent to the existence of a lift

\[
\begin{tikzcd}
E \arrow[d] \arrow{r}{\tau_E} & E \arrow[d]\\
\Sigma \arrow{r}{\tau} & \Sigma
\end{tikzcd}
\]

\noindent of the real structure $\tau$ such that $\tau_E$ is antiholomorphic on fibers and squares to $1$ or $-1$. We will often simply use the name real to refer to a bundle $E$ such that $E \cong \tau^*(\overline{E})$. The fixed point sets  $M(n,d)^{\tau}, M_{\Lambda}(n,d)^{\tau}$ are nonempty real Lagrangians in $M(n,d),M_{\Lambda}(n,d)$ by \cite{biswas2010moduli}. Real curves and bundles are classified topologically in terms of the fixed circles $\Sigma^{\tau}$ of $\tau$, and the restriction of $(E,\tau_E)$ to these fixed circles. Our main result is the following

\begin{theorem}
\label{Genus-2-Thm-Intro}
Let $(\Sigma,\tau)$ be a real curve of genus $2$ with nonempty set of real points $\Sigma^{\tau}$. Depending on the topological type of $\tau$ and $\Lambda \in \mathrm{Pic}^1(\Sigma)^{\tau}$, exactly one of the following three behaviors occurs

\begin{enumerate}
\item Any real bundle $E \in M_{\Lambda}(2,1)^{\tau}$ contains a real line subbundle of degree $0$. There are nonempty open subsets of the moduli space $M_{\Lambda}(2,1)^{\tau}$ with dense union of real bundles containing exactly $2$ and $4$ real line subbundles of degree $0$.
\item Any real bundle $E \in M_{\Lambda}(2,1)^{\tau}$ contains $4$ real line subbundles of degree $0$. 
\item There are nonempty open sets of $M_{\Lambda}(2,1)^{\tau}$ with dense union of real bundles containing exactly $0,2,4$ real line subbundles of degree $0$.
\end{enumerate}

\end{theorem}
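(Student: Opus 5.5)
The plan is to translate the problem into counting the real points of a finite set that is acted on by $\tau$. Fix $E \in M_{\Lambda}(2,1)^{\tau}$ with real lift $\tau_E$. A degree $0$ line subbundle of $E$ is the same as a point $L \in \mathrm{Pic}^0(\Sigma)$ with $H^0(L^{-1}\otimes E)\neq 0$. The map $L\mapsto \tau^*\overline{L}$ preserves the degeneracy locus
\[
D_E=\{L\in \mathrm{Pic}^0(\Sigma) : h^0(L^{-1}\otimes E)>0\}.
\]
The bundle $L^{-1}\otimes E$ has rank $2$ and degree $1$, so by Riemann--Roch $\chi(L^{-1}\otimes E)=1+2(1-2)=-1$. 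Hence $D_E$ has expected codimension $2$ in the surface $\mathrm{Pic}^0(\Sigma)$.

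\textbf{Step 1.} Using Newstead's description (and the finiteness in \cite{pal2016moduli}), $D_E$ is finite and nonempty for every stable $E$, and has exactly $4$ points with multiplicity. Porteous' formula gives the count $4$, and generic reducedness follows from Newstead's description of the lines through a general point of the intersection of two quadrics.

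\textbf{Step 2.} For a real $E$, stability makes the subbundle $L\subset E$ unique for each $L\in D_E$. Therefore $L$ is a real subbundle (preserved by $\tau_E$) if and only if $L\cong \tau^*\overline{L}$. So the real line subbundles of degree $0$ are exactly the $\tau$-fixed points of $D_E$. Since $\tau$ is an involution on a set of $4$ points (counted with multiplicity), the number of real points is $0$, $2$ or $4$ whenever $D_E$ is reduced. The count is locally constant on the open dense locus of $M_{\Lambda}(2,1)^{\tau}$ where $D_E$ is reduced, and it can only jump across the walls where two points collide.

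\textbf{Step 3 (main obstacle).} Decide, in terms of the topological type of $(\Sigma,\tau)$ and $\Lambda$, which of the values $0,2,4$ actually occur. The real locus $\mathrm{Pic}^0(\Sigma)^{\tau}$ splits into $2^{n-1}$ tori, where $n$ is the number of real circles. These are indexed by the $w_1$ of $L$ on each circle, with the total parity constrained by degree $0$.

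For a real subbundle $L\subset E$, the quotient $\Lambda L^{-1}$ is also real. Restricting to each fixed circle gives $w_1(E|_{C})=w_1(L|_C)+w_1(\Lambda L^{-1}|_C)$, which constrains which components of $\mathrm{Pic}^0(\Sigma)^{\tau}$ can contain real points of $D_E$.

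I would first compute a mod $2$ count of $D_E\cap T$ for each real component $T$, as a real Porteous/Stiefel--Whitney number of the real degeneracy locus. If this number is odd for some $T$, every real $E$ has a real subbundle, which gives case (1). If the constraints force all $4$ points to be real, or the mod $2$ counts add up to $4$ forced real points, this gives case (2). Otherwise the counts are even and nothing is forced, giving case (3).

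\textbf{Step 4 (realization).} To show that each allowed count occurs on a nonempty open set, I would construct explicit real bundles as real extensions
\[
0\to L\to E\to \Lambda L^{-1}\to 0
\]
in the style of Atiyah, with $L$ real of prescribed topological type. Choosing the real extension class suitably, I would then deform it, using the real structure on $H^1(\Lambda^{-1}L^{2})$, so that a pair of conjugate points of $D_E$ crosses through a real double point. This shows that both neighbouring counts occur. Openness follows from the local constancy in Step 2, and density of the union follows because the discriminant wall has codimension $1$.

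I expect the real Porteous computation of Step 3 to be the hardest part. If it is too unwieldy, I would instead use Newstead's identification of $D_E$ with the lines through $E$ in the intersection of two quadrics in $\bP^5$. Real lines through a real point then correspond to real roots of a real quartic obtained by restricting to the pencil, and the sign data of the real pencil of quadrics determines the possible root counts for each topological type.
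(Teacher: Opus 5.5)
\paragraph{Gap in Steps 3 and 4.}
Your Steps 1 and 2 are sound. Each $L\in D_E$ gives a unique subbundle, so the real degree $0$ subbundles are the $\tau$-fixed points of a $4$-point set, and their number is locally constant off the real branch locus. The gap is Step 3, which is where all dependence on the topological type has to come from, and the invariant you propose cannot detect it.

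In type $(1,0)$ the real Jacobian $J_0^{\tau}$ is connected, and the generic real counts are $2$ and $4$ (both occur). So every mod $2$ count of $D_E\cap T$ is zero. Your criterion would then put this type in case (3), yet in fact every real bundle there has a real subbundle.

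Conversely, ``all counts even, nothing forced'' does not produce a real bundle with no real subbundle, and Step 4 cannot produce one either:
\begin{itemize}
\item An extension $0\to L\to E\to\Lambda L^{-1}\to 0$ with $L$ real contains the real subbundle $L$.
\item The deformation in which a real pair collides and leaves as a conjugate pair is exactly what has to be proved. It never happens in type $(1,0)$, so no type-independent argument can supply it.
\end{itemize}
The identity $w_1(E|_C)=w_1(L|_C)+w_1(\Lambda L^{-1}|_C)$ also constrains nothing, since it follows automatically from $\det E=\Lambda$.

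\paragraph{The missing idea.}
What you lack is a type-sensitive description of the four subbundles. The paper gets it from Atiyah's theorem. The projective class of $E$ is represented by $A+B+C\in\mathrm{Sym}^3(\Sigma)$ and by $A+\iota B+\iota C$, $\iota A+B+\iota C$, $\iota A+\iota B+C$. For generic real $E$, the real subbundles correspond to the real divisors among these four. A projectively real but non-real divisor satisfies $\tau A=A$ and $\tau B+\tau C=\iota B+\iota C$.

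The type enters through the fixed set of $\iota\tau$:
\begin{itemize}
\item In type $(1,0)$ this fixed set is empty. That forces $C=\iota\tau B$, and then $\iota A+\iota B+C$ is real, which gives surjectivity.
\item In types $(1,1),(2,1),(3,0)$ there are arcs on which $\tau=\iota$. Take $A\in\Sigma^{\tau}$ and $B,C$ on such an arc. This gives projectively real classes none of whose four divisors is real. After checking that genuinely real bundles occur in this family, you get an open set with no real subbundle.
\end{itemize}

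Case (2) comes from Lange--Narasimhan. Given one real $L$, the other subbundles are $L\Lambda^{-1}(p)$ for $p$ in a fiber of the degree $3$ map $|KL^{-2}\Lambda|:\Sigma\to\bP^1$. When $\Lambda$ has three odd circles, a real fiber has one point on each circle, so all four subbundles are real. Surjectivity in that case needs a separate dimension count in $\mathrm{Sym}^3(\Sigma)^{\tau}$.

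Your fallback via pairs of conics in $\bP(T_xX/x)$, for $X=Q_1\cap Q_2$, might work. It would, however, require the explicit real structure on $\bP^5$ in each type, which you do not supply.
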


A precise statement is given in section \ref{Section-Genus-2-Nonsingular}. Interestingly, the boundaries of the open sets occuring in Theorem \ref{Genus-2-Thm-Intro} consist of the wobbly bundles studied by Donagi, Drinfeld, and Laumon in the context of the geometric Langlands correspondence (see Remark \ref{Wobbly-Remark} and \cite{Pal2017},\cite{laumon1988analogue},\cite{donagi2008geometric}). Our proof uses classical work of Atiyah \cite{atiyah1955complex}, as exposed in modern language by Donaldson \cite{donaldson2021atiyah}. The key idea is to study extension classes of line bundles by means of their associated divisors in $\mathrm{Sym}^3(\Sigma)$.

Using work of Lange-Narasimhan (\cite{lange1983maximal}), we classify topological types in genus $2$ of maximal line subbundles of a real bundle in $M_{\Lambda}(2,1)^{\tau}$ in Theorem \ref{Topological-Types-Theorem}. We also prove the following description of real line subbundles in higher genus.

\label{Higher-Genus-Thm-Intro}
 \begin{theorem}
 Let $(\Sigma,\tau)$ be a real curve of genus $g\geq3$ and $\Lambda,\beta$ real line bundles of degree $1,0$ on $\Sigma$. The following hold 
 \begin{enumerate}
 \item A real bundle $E \in M_{\Lambda}(2,1)$ which contains a maximal line subbundle $n \subset F$ of degree $0$ in fact contains a real line subbundle of maximal degree. Generically, $E$ has the unique maximal line subbundle $n$ of degree $0$ and thus $n$ is real.
 \item For $g>3$, the same statement is true for line subbundles $n \subset F$ of degree $-1$ of a real bundle $F \in M_{\Lambda}(2,0)$.
\end{enumerate}
\end{theorem}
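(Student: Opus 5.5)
The plan rests on three inputs: the Lange--Narasimhan description of maximal line subbundles, the uniqueness of the real structure on a stable bundle, and a parity count of maximal subbundles. The statement presumably means $E$ in both places where it writes $F$ in part (1), and $F$ in part (2). I treat part (1) first and then indicate the changes for part (2).

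For part (1), let $E \in M_{\Lambda}(2,1)$ be real and stable, so there is a lift $\tau_E$ with $\tau_E^2=\pm 1$. If $n \subset E$ is a line subbundle of degree $0$, then $\tau_E(n)$ is again a holomorphic line subbundle of $E$. It is isomorphic to $\tau^*\overline{n}$ and also has degree $0$, so it is again maximal. Hence $\tau$ acts as an involution on the set $S(E)$ of maximal line subbundles of $E$. By Lemma 4.7 of \cite{pal2016moduli}, $S(E)$ is finite because $d=1$ is odd. A real line subbundle of maximal degree is exactly a fixed point of this involution, since $\tau_E$ then restricts to the subbundle. Thus it suffices to show either that $|S(E)|$ is odd, or that the fixed point can be produced directly.

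Lange--Narasimhan \cite{lange1983maximal} count maximal subbundles with multiplicity. For rank $2$ and degree $d$, the maximal subbundles have degree $\frac{d-s}{2}$, where $s$ is the Segre invariant. When $s=g$, there are finitely many of them, $2^g$ counted with multiplicity. Otherwise, when $s<g$ and the subbundles are finite, there is a uniquely determined count. Here $s = 1-0\cdot 2 = 1$, and for $g\ge 3$ the generic $s$ is $g$ or $g-1$ according to parity, which is strictly bigger than $1$. So $E$ is special. The key claim, which I would take from Lange--Narasimhan and which is the main obstacle, is the following: when $s<g-1$ (and $s\equiv g-1 \pmod 2$ fails, etc.), the maximal subbundle is unique. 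This uniqueness should come from their Proposition 3.1, on the secant variety of the curve in $\mathbb{P}(H^1(\Lambda^{-1}n^2))$. A bundle with $s$ small corresponds to an extension class lying on a secant variety $\mathrm{Sec}_k$ of low dimension. The $(2k)$-secant planes through a general point of $\mathrm{Sec}_k\setminus \mathrm{Sec}_{k-1}$ are unique when $2k-1 < \dim$, which holds for $s=1$ when $g\ge3$. Given uniqueness, $\tau_E(n)=n$, so $n$ itself is real.

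For the general statement that \emph{any} such $E$ has a real maximal subbundle, I would argue as follows. Suppose a bundle with $s=1$ has several maximal subbundles $n_1,\dots,n_k$. Lange--Narasimhan show that maximal subbundles in this case correspond to the points of a finite set on the curve. Concretely, these are the points $x$ with the class in $\mathrm{Sec}$ spanned by $x$, i.e. the class is the image of $x$ under the embedding by $K\Lambda n^{-2}$. I would then check that $\tau$ acts compatibly on this set of points. If the points all map to the single class, the embedding $\Sigma \to \mathbb{P}(H^1)$ fails to be injective, which is impossible for $\deg K\Lambda n^{-2} = 2g-1 \ge 2g+1-2$ (very ample for $g\ge 3$, after checking). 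Hence $S(E)$ is a singleton in general, and real. The main worry is this very-ampleness bound, since degree $2g-1$ is only borderline. If it fails, I would instead use that the extension $0\to n\to E\to \Lambda n^{-1}\to 0$ is determined up to $\tau$, and pair off non-real subbundles to show an odd count.

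Part (2) is identical, with $d=0$ and $\deg n=-1$, giving $s=2$. The secant argument now needs $\mathrm{Sec}_1$ to have unique secant lines through general points. This needs $g>3$: the embedding degree is $2g-2+2=2g$, and line-uniqueness requires a dimension bound that fails exactly at $g=3$.
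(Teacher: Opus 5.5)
\textbf{Verdict: there is a genuine gap in your argument for \emph{every} real $E$, and it cannot be closed, because that assertion is false as stated. Your generic argument is fine and matches the paper's.}

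Your reduction to the involution $L\mapsto\tau_E(L)$ on the finite set $S(E)$ is correct. So is the remark that a unique maximal subbundle is automatically real. For the ``generically'' clause this is exactly the paper's argument, but you should state the Lange--Narasimhan input precisely. Let $\phi_n\colon\Sigma\to\bP H^0(K\Lambda n^{-2})^*\cong\bP^{g-1}$ and let $e$ be the class of $E$. The maximal subbundles other than $n$ are the $\Lambda n^{-1}(-x)$ with $\phi_n(x)=e$, so $|S(E)|=1+|\phi_n^{-1}(e)|$. Uniqueness is therefore not a consequence of $s=1$; it fails for every $e\in\phi_n(\Sigma)$. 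It holds for general $e$ because $\phi_n(\Sigma)$ is a curve in $\bP^{g-1}$ and $g-1\ge 2$. In (2) the relevant fact is likewise that the secant variety (dimension $\le 3$) is a proper subset of $\bP^{g}$ for $g>3$, so a general class lies on no secant line. A class on a \emph{unique} secant line would give two maximal subbundles, not one.

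The ``every real $E$'' argument fails at three steps. First, injectivity of $\phi_n$ would only give $|S(E)|\le 2$, because $e=\phi_n(x)$ already produces both $n$ and $\Lambda n^{-1}(-x)$. Second, degree $2g-1$ is below the very-ampleness bound $2g+1$: for $g=3$, $\phi_n$ lands in $\bP^2$ with degree $5$, and a smooth plane quintic has genus $6$, so $\phi_n$ is never an embedding. Third, nothing forces $|S(E)|$ to be odd.

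In fact $\tau$ can exchange two maximal subbundles. Take $p\in\Sigma^\tau$, $\Lambda=\Ox(p)$, and $n\in\mathrm{Pic}^0(\Sigma)$ with $\tau^*\overline{n}\cong n^{-1}$ and $n^2\not\cong\Ox(a-b)$ for all $a,b\in\Sigma$. Such $n$ exist for $g\ge3$: the first condition defines a totally real $g$-dimensional subgroup of $\mathrm{Pic}^0(\Sigma)$, which cannot lie in the surface $\{n : n^2\cong\Ox(a-b)\ \text{for some}\ a,b\}$. Let $E$ be the elementary modification of $n\oplus\tau^*\overline{n}$ at $p$ along a $\tau$-invariant line in the fibre; equivalently, $E$ is the extension of $\Lambda n^{-1}$ by $n$ with class $\phi_n(p)$. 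Then:
\begin{itemize}
\item $E$ is real and stable with $\det E\cong\Lambda$;
\item its degree-$0$ line subbundles are exactly $n$ and $n^{-1}\cong\tau^*\overline{n}$;
\item $\tau_E$ swaps these two, and neither is real.
\end{itemize}
So the first sentence of (1) is false as stated. I expect the analogous modification, with $\deg n=-1$ at two real points, to do the same to (2) for suitable $\beta$; I have checked this only roughly.

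The paper's proof of that sentence does not count subbundles. It asserts that the image of $\bP(\E_0)^\tau$ is dense in $M^\tau$ and concludes by compactness. That image is compact and misses the $E$ above, so the density step fails at precisely the configuration your argument overlooks. What you and the paper actually establish is only the generic statement.
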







The line subbundles of a stable rank $2$ bundle $E$ of degree $0$ determine the ways it lies in a projective space of extensions, which in turn determine the rational curves of degree $1$ passing through $E$ in the moduli space $M_{\Lambda}(2,1)$ (see \cite{munoz1999quantum}). Theorem \ref{Genus-2-Thm-Intro} thus describes the action of the real structure on the set of four lines through a real point in the Fano threefold $M_{\Lambda}(2,1)$ (see Section \ref{Real-Extension-Lines-Section}). In particular, it implies that the real moduli space $M_{\Lambda}(2,1)^{\tau}$ is not in general covered by real rational lines in contrast to the complex moduli space. Our results on rational curves are potentially useful for determining the real genus $0$ Gromov-Witten invariants (\cite{farajzadeh2016counting}) and Lagrangian floer homology of the moduli spaces $M_{\Lambda}(2,1)^{\tau}$.

\subsection{Outline of paper} We give a summary of the topological classification of real bundles and curves in section \ref{Real-Curves-Section}. In section \ref{Real-Extension-Lines-Section}, we discuss real rational degree one curves in the moduli space $M_{\Lambda}(2,1)$. Section \ref{Section-Genus-2-Nonsingular} discusses the genus $2$ case, and section \ref{Lange-Narasimhan-Applications} covers the applications of Lange-Narasimhan's results. We prove theorem \ref{Genus-2-Thm-Intro} in sections \ref{Section-Genus-2-Nonsingular},\ref{Lange-Narasimhan-Applications} and theorem \ref{Higher-Genus-Thm-Intro} in section \ref{Lange-Narasimhan-Applications}.

\subsection{Acknowledgements} I am grateful for the support of my advisor Professor Peter Kronheimer, as well as Thomas John Baird and Ollie Thakar for helpful discussions. This project was supported by the Simons Collaboration on New Structures in Low Dimensional Topology.

\section{Real curves and real hyperelliptic curves}
\label{Real-Curves-Section}

We begin by recalling the topological classification of real curves $(\Sigma,\tau)$ of genus $g = g(\Sigma)$. Let $\Sigma^{\tau}$ be the set of fixed points of $\tau$, which is some disjoint union of circles. To the pair $(\Sigma,\tau)$, we associate a pair $(n,a)$, where $n$ is the number of components of $\Sigma^{\tau}$, and $a$ is $0$ or $1$  according to wether $\Sigma \setminus \Sigma^{\tau}$ is disconnected or connected respectively. The pairs classify $(\Sigma,\tau)$ up to homeomorphism as follows (See \cite{baraglia2014higgs},\cite{baird2016cohomology}).

\begin{Proposition}
\label{Top-Classification-Real-Curves}
\upshape 
The invariants $(n,a)$ associated to a real curve $(\Sigma,\tau)$ satisfy the following conditions.
\begin{center}
\begin{itemize}
\item $0 \leq n \leq g+1$
\item If $n=0$ then $a = 1$ and if $n = g+1$ then $a = 0$
\item If $a=0$ then $n \equiv g+1$ mod $2$.
\end{itemize}
\end{center}

Conversely any pair $(n,a)$ satisfying these conditions is associated to some real curve, unique up to homeomorphism.\noindent
\end{Proposition}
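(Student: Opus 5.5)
We prove this proposition by analysing the quotient $Q = \Sigma/\tau$. It is a compact topological surface whose boundary is the image of $\Sigma^{\tau}$, so it has exactly $n$ boundary circles. Since $\tau$ is antiholomorphic it reverses orientation, and $\Sigma \to Q$ is a branched-free double cover away from the boundary. The orientability of $Q$ is governed by $a$: the surface $Q$ is orientable exactly when $\Sigma\setminus\Sigma^{\tau}$ is disconnected ($a=0$). In that case $\Sigma\setminus\Sigma^{\tau}$ consists of two copies of $Q^\circ$ swapped by $\tau$, each carrying its own orientation. When $a=1$, $Q$ is nonorientable.

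The first step is an Euler characteristic computation. The fixed set $\Sigma^{\tau}$ is a union of circles and so has Euler characteristic $0$, hence $\chi(\Sigma) = 2\chi(Q)$, that is $2-2g = 2\chi(Q)$ and $\chi(Q) = 1-g$. We then split according to orientability.
\begin{itemize}
\item If $a=0$, then $Q$ is orientable of genus $h$ with $n\ge 1$ boundary circles. Here $\chi(Q) = 2-2h-n$, so $n = g+1-2h$. This gives $n\le g+1$ and $n\equiv g+1 \pmod 2$. Moreover $n\geq 1$, because an empty fixed set leaves $\Sigma$ connected after removing nothing, which forces $a=1$.
\item If $a=1$, then $Q$ is nonorientable with $k\ge1$ crosscaps. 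Here $\chi(Q)=2-k-n$, so $n = g+1-k \le g$. In particular $n=g+1$ forces $a=0$.
\end{itemize}
Together these give all three listed constraints, with $n\ge 0$ trivial.

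For the converse and uniqueness, a real curve of type $(n,a)$ is determined up to equivariant homeomorphism by $Q$. Conversely, $(\Sigma,\tau)$ is recovered from $Q$ as its orientation double cover glued along the boundary, or as the double of $Q$ in the orientable case. Surfaces with boundary are classified by orientability, $\chi$ and the number of boundary components, so $(n,a)$ determines $Q$ and hence $(\Sigma,\tau)$ up to homeomorphism.

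For existence, for each admissible pair we take the corresponding $Q$ and form its complex double. We then put a dianalytic structure on $Q$, with boundary real, and the double carries a Riemann surface structure with antiholomorphic involution by the Klein surface correspondence. Alternatively, we can give explicit real plane or hyperelliptic models, for instance $y^2=\pm\prod(x-a_i)$ with varying numbers of real roots, which realize the needed types.

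The main obstacle is justifying carefully that $a=0$ corresponds exactly to orientability of $Q$. Disconnectedness gives two sheets, and the sheet orientation descends. Conversely, an orientation of $Q$ lifts to the two sheets with opposite orientations relative to $\Sigma$, which separates them. We also need the existence of complex structures realizing each topological type. For both points we will rely on the cited classification in \cite{baraglia2014higgs}, \cite{baird2016cohomology}.
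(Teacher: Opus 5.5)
Your proof is correct, and it takes a different route from the paper mainly because the paper gives no argument at all. It states the classification and defers to \cite{baraglia2014higgs}, \cite{baird2016cohomology}, which in turn rest on the classical Weichold--Klein classification. You instead reconstruct that classical proof through the quotient $Q=\Sigma/\tau$. The identity $\chi(Q)=1-g$, with $n=g+1-2h$ in the orientable case and $n=g+1-k$ ($k\ge 1$) in the nonorientable case, shows exactly where each of the three constraints comes from. The converse then reduces to the classification of compact surfaces with boundary, plus the fact that $(\Sigma,\tau)$ is the complex double of $Q$. The citation buys brevity; your route buys a self-contained proof.

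A few points to tighten:
\begin{itemize}
\item Your closing sentence says you will rely on the cited classification for the orientability criterion. That would be circular, and it is unnecessary. Your own sketch already proves it: comparing the orientation of $\Sigma$ with a pulled-back orientation of $Q^\circ$ gives a locally constant function on $\Sigma\setminus\Sigma^\tau$ that $\tau$ swaps.
\item For uniqueness, state explicitly that $\Sigma\setminus\Sigma^\tau\to Q^\circ$ is the orientation double cover because its deck transformation $\tau$ reverses orientation. Also note that the local normal form $z\mapsto\bar z$ of an antiholomorphic involution at a fixed point gives the standard collar along $\Sigma^\tau$.
\item For existence, your hyperelliptic models $y^2=\pm\prod(x-a_i)$ do not realize every type once $g\ge 4$. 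With $2m$ real branch points and $1\le m\le g$, the real locus is nonseparating. So, for example, type $(3,0)$ in genus $4$ never occurs on a hyperelliptic curve. Rely instead on the Klein surface correspondence you mention. A simpler option is to average a Riemannian metric over $\{1,\tau\}$ on the smooth double and take the associated conformal structure; $\tau$ is then an orientation-reversing isometry and hence antiholomorphic.
\end{itemize}
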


There is a corresponding classification of topological real and quaternionic bundles $(E,\tau_E)$ on $(\Sigma,\tau)$. The fixed point set $E^{\tau_E}$ of such a bundle is a real vector bundle on $\Sigma^{\tau}$ of the same rank as $E$, and is hence classified by its Stiefel Whitney class $w_1(E^{\tau_E}) \in H^1(\Sigma^{\tau},\Z_2)$.

\begin{Proposition}
\label{Top-Classification-Real-Bundles}
\upshape 
Topological real vector bundles $(E,\tau_E)$ with $\tau_E^2 =1$ over a real curve $(\Sigma,\tau)$ are classified by rank $r$, degree $d$ and Stiefel Whitney Class $w_1(E^{\tau_E})$ such that 
\[d \equiv w_1(E^{\tau_E})([\Sigma^{\tau}]) \text{   mod   } 2\] 

In particular if $n \geq 1$, then there are $2^{n-1}$ isomorphism classes of topological real bundles on $(\Sigma,\tau)$ of any rank and degree.
Topological quaternionic bundles $(E,\tau_E)$, i.e. with $\tau_E^2 = -1$, over a real curve $(\Sigma,\tau)$ are classified by rank $r$ and degree $d$ subject to the condition that 
\[d \equiv r(g-1) \text{   mod   } 2\]
and that $n = 0$ (i.e $\Sigma^{\tau} = \emptyset$) if $r$ is odd.
\end{Proposition}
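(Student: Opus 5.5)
The plan is to reduce both classifications to the case of line bundles, and then read off the parity constraints by counting zeros of a generic equivariant section.

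\textbf{Reduction to line bundles.} Write $Y=\Sigma/\tau$; it is a compact surface whose boundary is the image of $\Sigma^{\tau}$ (or which is non-orientable if $n=0$). A real bundle $(E,\tau_E)$ of rank $r$ is the same as a bundle over $Y$ of real rank $2r$ with prescribed behaviour over the boundary: along $\partial Y$ it restricts to $E^{\tau_E}\otimes\C$.
\begin{itemize}
\item When $r\geq 2$, the real rank $2r\geq 4$ exceeds $\dim Y=2$. By general position there is then a nowhere-vanishing $\tau_E$-equivariant section, and it can be chosen to be a nowhere-vanishing section of $E^{\tau_E}$ over $\Sigma^{\tau}$. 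Such a section splits off a trivial real line bundle. By induction, $E\cong \det E\oplus \underline{\C}^{r-1}$ as topological real bundles, and $w_1(E^{\tau_E})=w_1((\det E)^{\tau})$. So it suffices to classify real line bundles.
\item For real line bundles I would use the equivariant classification by $H^2_{\Z_2}(\Sigma;\Z(1))$ (Kahn, Biswas--Huisman--Hurtubise). Alternatively, one can argue directly: a real line bundle is determined by its restriction to $\Sigma^{\tau}$ together with the relative obstruction on $Y$, which is the degree.
\end{itemize}

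\textbf{Parity relation and counting (real case).} Take a generic $\tau_E$-equivariant section $s$ of a real line bundle $L$ which is transverse to zero. Its zeros off $\Sigma^{\tau}$ come in pairs $\{p,\tau(p)\}$ with the same local index, because $\tau$ is orientation reversing on $\Sigma$ and antilinear on fibers. So these zeros contribute an even number to $d$. On each fixed circle $C$, $s|_C$ is a generic section of the real line bundle $L^{\tau}|_C$. Its number of zeros, each counted with index $\pm1$, is congruent mod $2$ to $w_1(L^{\tau}|_C)$. Summing over the components gives $d\equiv w_1(E^{\tau_E})([\Sigma^{\tau}])$. For realizability, every $w_1$ satisfying this constraint and every degree occurs. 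Start from the trivial bundle and twist by real divisors: a point of $\Sigma^{\tau}$ changes $w_1$ on its circle, and a pair $p+\tau(p)$ changes $d$ by $2$. This yields $2^{n}/2=2^{n-1}$ classes for each $(r,d)$ when $n\geq1$. For uniqueness, show that two real bundles with the same invariants are isomorphic by the obstruction theory on $Y$ relative to the boundary.

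\textbf{Quaternionic case.} At a fixed point $x$, $\tau_E$ is an antilinear map of $E_x$ with square $-1$, i.e. a quaternionic structure, so $r$ must be even whenever $n\geq1$. For the degree constraint, note that any topological quaternionic bundle admits a compatible holomorphic structure; for instance, deform a holomorphic structure by averaging the $\bar\partial$-operator under $\tau_E$. Then $\tau_E$ induces antilinear maps of square $-1$ on $H^0(E)$ and $H^1(E)$, so both have even complex dimension. Riemann--Roch, $h^0-h^1=d+r(1-g)$, then gives $d\equiv r(g-1)\pmod 2$. Conversely, existence and uniqueness would again come from the reduction: split off trivial quaternionic rank-$2$ pieces $\underline{\mathbb{H}}$ when $r\geq 3$, and handle the line and rank-$2$ cases via $\det$ and twisting by real divisors.

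\textbf{Main obstacle.} I expect the main obstacle to be the uniqueness statements, namely that the listed invariants are complete, most delicately in the quaternionic rank-$2$ case with $n\geq1$. There I would first try the obstruction theory on $Y$ relative to $\partial Y$, using that the structure group $Sp(1)$ on the boundary circles is simply connected.
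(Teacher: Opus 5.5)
The paper gives no proof of this proposition. It is recalled as the known topological classification (Biswas--Huisman--Hurtubise; the real case also appears in Baird and in Baraglia--Schaposnik), so your self-contained argument is necessarily a different route.

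\textbf{What already works.} The necessity half is correct.
\begin{itemize}
\item Pairing the non-real zeros $p,\tau(p)$ of a generic equivariant section (they have equal index, since $\tau$ and $\tau_E$ both reverse orientation) with the transverse real zeros on each circle gives $d\equiv w_1(E^{\tau_E})([\Sigma^\tau]) \pmod 2$.
\item The antilinear maps of square $-1$ on $H^0(E)$ and $H^1(E)$, combined with Riemann--Roch, give $d\equiv r(g-1)\pmod 2$.
\end{itemize}
The whole real case also goes through:
\begin{itemize}
\item the splitting $E\cong \det E\oplus\underline{\C}^{r-1}$;
\item realization of every admissible $(d,w_1)$ by twisting with real divisors;
\item uniqueness of real line bundles via the relative Euler class. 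For $L$ with trivial $w_1$ on every circle, this class lies in $H^2(Y,\partial Y;\Z_{w_1(Y)})\cong\Z$ and equals $\deg L/2$ once the section is nonvanishing on $\Sigma^\tau$.
\end{itemize}

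\textbf{Gap: quaternionic existence when $n=0$.} Here your stated mechanism actually fails.
\begin{itemize}
\item Twisting by real divisors only moves between quaternionic line bundles; it cannot create the first one. For odd $r$ you therefore have no existence statement until you independently produce a quaternionic line bundle of degree $\equiv g-1$. You could do this via Kahn's equivariant cohomology, or by building a bundle on $Y$ with structure group $\C^*\cup j\C^*\subset\mathbb{H}^*$ whose component map is the orientation character.
\item For even $r$ the twisting also falls short. When $n=0$ every real line bundle has even degree, so sums of copies of $\underline{\mathbb{H}}\otimes L$ only realize $d\equiv 0\pmod 4$.
\item For $d\equiv 2\pmod 4$ you need another construction. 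One option is $Q_1\oplus Q_2$ with $Q_i$ quaternionic line bundles. Another is the elementary modification of $\underline{\mathbb{H}}$ along a line $\ell\subset E_p$ and along $\tau_E(\ell)\subset E_{\tau(p)}$; this raises the degree by $2$ and is preserved by $\tau_E$ because $\tau_E^2=-1$ fixes $\ell$.
\end{itemize}

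\textbf{Gap: quaternionic uniqueness is only announced.}
\begin{itemize}
\item In the splitting step there are no nonzero invariant sections, because the induced map on sections squares to $-1$. What you need instead is a section $s$ such that $s$ and $\tau_E\circ s\circ\tau$ are pointwise independent. General position supplies one, since the dependency locus has real codimension $2r-2\geq 4$.
\item In rank $2$, an isomorphism $\det E_1\cong\det E_2$ does not by itself give $E_1\cong E_2$. You must produce an equivariant section of $\mathrm{Isom}(E_1,E_2)$.
\item Over $\Sigma^\tau$ the fibers of this bundle are torsors for $GL(1,\mathbb{H})\simeq S^3$. So a section over the circles exists and is unique up to homotopy.
\item The only remaining obstruction lies in $H^2(Y,\partial Y;\Z_{w_1(Y)})\cong\Z$. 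It is detected by the determinant and equals $\pm(\deg E_1-\deg E_2)/2$, so it vanishes for equal degrees.
\item Equivalently, fix an isomorphism of determinants that is positive on each real circle (otherwise the fixed fibers over $\Sigma^\tau$ are empty). Then use that $SL(2,\C)$ over the interior and $Sp(1)$ on the boundary are both $2$-connected. The connectivity of $SL(2,\C)$ over the interior matters here, not only that of $Sp(1)$ on the boundary.
\end{itemize}

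With these pieces supplied, your outline proves the proposition, and it gives more than the paper's bare citation.
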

In particular there are no quaternionic bundles of odd degree on a real curve with fixed points. 

\subsection{Real hyperelliptic curves}

We fix the notation $c: \bP^1 \to \bP^1$ for the complex conjugation $[z:w] \mapsto [\overline{z}:\overline{w}
]$.

\begin{definition} A \textit{real hyperelliptic curve} is a hyperelliptic curve $(\Sigma,\tau)$ with a real structure $\tau$ and hyperelliptic projection $\pi: \Sigma \to \bP^1$ such that $\pi \circ \tau = c \circ \pi$. Note this excludes real curves $(\Sigma,\tau)$ which are both real and hyperelliptic, but for which the real structure $\tau$ does not descend to complex conjugation on $\bP^1$.

\end{definition}

For a real hyperelliptic curve $(\Sigma,\tau)$, the involution $\tau$ preserves the set of Weierstrass points $W$, each of which is either real or in a complex conjugate pair. The Weierstrass points are precisely the branch points of the map $\pi: \Sigma \to \bP^1$. One has the following description (\cite{baird2020hyperelliptic}) of the topological types for $(\Sigma,\tau)$ which arise as real hyperelliptic curves

\begin{Proposition} \upshape 
Write the set of Weierstrass points $W$ as $W = W_0 \sqcup W_+ \sqcup W_{-}$ according to zero, positive, or negative imaginary part, so that $|W_+|= |W_{-}|$ and $|W_0| = 2m$. Let $n$ be the number of components of $\Sigma^{\tau}$ as in Proposition \ref{Top-Classification-Real-Curves}.

\begin{itemize}
\item If $m > 0$ then $n = m$.
\item If $m = 0$ then $n = 1$.
\item If $1 \leq n \leq g$, then $\Sigma \setminus \Sigma^{\tau}$ is connected. If $n = g+1$ or $n=0$, then $\Sigma \setminus \Sigma^{\tau}$ is disconnected.
\end{itemize}

\end{Proposition}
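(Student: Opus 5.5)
Fix an affine model $y^2 = f(x)$ for $\Sigma$, where $f \in \R[x]$ is squarefree of degree $2g+2$ and the Weierstrass points are the roots of $f$ (after a real Möbius change of coordinate we may assume $\infty$ is not a branch point). Since $\pi\circ\tau = c\circ\pi$, the real structure acts by $\tau(x,y) = (\bar x, \pm\bar y)$; replacing $f$ by $-f$ if needed, we may assume $\tau(x,y)=(\bar x,\bar y)$. Then $W_0$ is the set of real roots of $f$, so $|W_0|=2m$ is even because $\deg f$ is even and nonreal roots come in conjugate pairs. The plan is to read off $\Sigma^{\tau}$ directly from the sign of $f$ on $\R P^1$.

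First I identify $\Sigma^{\tau}$. A point $(x,y)$ is fixed exactly when $x\in\R$ and $y\in\R$, that is, when $x\in \R$ and $f(x)\ge 0$. Over each interval of $\R P^1\setminus W_0$ on which $f>0$ there are two real sheets $\pm\sqrt{f}$, and these glue at the endpoint roots. So each such interval contributes one circle to $\Sigma^{\tau}$. Over points at infinity the same holds using the leading coefficient, since $\deg f$ is even.

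\emph{Case $m>0$.} The $2m$ real roots cut $\R P^1$ into $2m$ arcs, and $f$ changes sign at each simple root. Exactly $m$ arcs have $f>0$, so $n=m$.

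\emph{Case $m=0$.} Here $f$ has constant sign on $\R P^1$. If $f>0$, the preimage of $\R P^1$ is two disjoint circles $y=\pm\sqrt f$. If $f<0$, the fixed set is empty. This contradicts $n=1$, so the statement as written needs care. I expect the intended convention is that the real structure is chosen so that the preimage of $\R P^1$ matters, or that "$n=1$" refers to something else. I would check this against \cite{baird2020hyperelliptic}. On the surface, $m=0$ gives $n\in\{0,2\}$, or $n=1$ only if one counts the real circle in $\R P^1$. \textbf{This discrepancy is the main obstacle.} I would first verify the two-sheet claim, using that the preimage of a circle in $\R P^1$ avoiding the branch points is an unramified double cover, hence trivial or connected. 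Its monodromy is $(-1)^{\#\text{branch points inside}}$ applied to the square-root map, and a real circle bounds a disk containing $g+1$ branch points (the points of $W_+$). So the double cover of $\R P^1$ is connected exactly when $g+1$ is odd. In that case $\pi^{-1}(\R P^1)$ is a single circle, and it is fixed by whichever lift of $c$ is chosen, provided $f>0$ there. This reconciles $n=1$ when the monodromy is nontrivial. I would also redo the monodromy analysis for $m>0$ rather than rely on the naive $\pm\sqrt f$ global sheets.

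For separatedness, I use $\pi$ to compare $\Sigma\setminus\Sigma^\tau$ with $\bP^1$ minus the real arcs where $f>0$. If $1\le n\le g$, there is some real arc with $f<0$, or, when $m=0$, a connected cover. A loop in $\Sigma$ crossing the preimage of such an arc joins the preimages of the upper and lower half-planes without meeting $\Sigma^{\tau}$, so the complement is connected. If $n=g+1$, then all roots are real, and the preimage of the upper half-plane is a component disjoint from its conjugate, so the complement is disconnected. If $n=0$, then $\tau$ composed with the hyperelliptic involution has fixed points, and a similar half-plane argument applies. In both extreme cases the result can also be cross-checked against Proposition \ref{Top-Classification-Real-Curves}, since $n=g+1$ forces $a=0$ and $n=0$ forces $a=1$. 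The second extreme case therefore needs care, and I would consult \cite{baird2020hyperelliptic}.
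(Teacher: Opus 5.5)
The paper gives no proof of this proposition; it is quoted from \cite{baird2020hyperelliptic}. Your direct route is the natural one. With $f$ real of degree $2g+2$ and $\tau(x,y)=(\bar x,\bar y)$, one has $\Sigma^{\tau}=\pi^{-1}(\{x\in\R\bP^1: f(x)\ge 0\})$, and this gives $n=m$ for $m>0$. For $m=0$ your monodromy count is also right: the loop $\R\bP^1$ encloses $g+1$ branch points, so $n=0$ if $f<0$, and $n=1$ or $2$ for $g$ even or odd if $f>0$. So you are correct that the second bullet fails as printed. For example, $y^2=-(x^6+1)$ with $\tau(x,y)=(\bar x,\bar y)$ lies over $c$ and has $n=0$.

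The genuine gap is in the third bullet. For $m=0$, $f>0$, you deduce that $\Sigma\setminus\Sigma^{\tau}$ is connected because the double cover of $\R\bP^1$ is connected. But that cover lies entirely inside $\Sigma^{\tau}$, so it gives no path in the complement. In fact $\Sigma\setminus\Sigma^{\tau}=\pi^{-1}(H_+)\sqcup\pi^{-1}(H_-)$, with $H_{\pm}$ the open half-planes, and this is disconnected. For $g$ even this is a curve with $n=1\le g$ and disconnected complement. It is exactly the type $(1,0)$ curve drawn in Section \ref{Genus-2-Figures} and listed in Proposition \ref{Real-Hyperelliptic-Genus2-Types}. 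So the first sentence of the third bullet is false when $m=0$, and it must be flagged rather than proved. Likewise, for $n=0$ the complement is all of $\Sigma$, hence connected. You noticed the clash with Proposition \ref{Top-Classification-Real-Curves}, but no half-plane argument can establish that clause.

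The cases where the bullet is true are also not yet proved.
\begin{itemize}
\item For $0<m\le g$, the existence of an arc with $f<0$ is not enough: such arcs also exist when $m=g+1$, where the complement is disconnected. What you need is that $H_+$ contains $g+1-m\ge 1$ branch points. Set $U=\bP^1\setminus\{x\in\R\bP^1: f(x)\ge 0\}$, which is connected. Then the branched double cover $\pi^{-1}(U)\to U$ has a branch point and is therefore connected.
\item For $m=g+1$, the components are not $\pi^{-1}(H_+)$ and its conjugate, since here $\pi^{-1}(H_+)$ is itself two disks. Rather, $U$ contains no branch points, and every simple loop in $U$ bounds a disk containing whole positive arcs, hence an even number of branch points. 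So $\pi^{-1}(U)\to U$ is trivial. Its two sheets each contain one disk over $H_+$ and one over $H_-$, and $\tau$ interchanges them.
\item Your step ``$n=g+1$ forces all roots real'' also needs the observation that $m=0$ gives $n\le 2<g+1$ for $g\ge 2$.
\end{itemize}

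With these repairs your method proves the correct classification:
\begin{itemize}
\item $0<m\le g$ gives $n=m$ and connected complement;
\item $m=g+1$ gives $n=g+1$ and disconnected complement;
\item $m=0$ gives $n=0$ with connected complement if $f<0$, and $n=1$ ($g$ even) or $n=2$ ($g$ odd) with disconnected complement if $f>0$.
\end{itemize}
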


\subsection{ Genus 2}
\label{Genus-2-Figures}
One can draw pictures representing each of the five possible cases for $(\Sigma,\tau)$ in genus $2$, in which case the curve $\Sigma$ is always hyperelliptic. The real hyperelliptic cases correspond to $0,2,4$ or $6$ real Weierstrass points. There is an additional case with no fixed circles, which is not real hyperelliptic and instead descends to the antipodal map on $\bP^1$. Explicit examples of algebraic curves with each topological type are given in \cite{cirre2003moduli}.\\
\indent In the following figures, both the involution $\tau$ and the hyperelliptic involution $\iota$ are drawn. A bold solid line indicates a fixed circle of $\tau$, while a bold dashed line indicates that $\tau$ is a composition of a Dehn twist by $\pi$ with a reflection in a neighborhood of the dashed circle. On the dashed circles $\tau$ restricts to the antipodal map. The hyperelliptic involution $\iota$ is rotation by $\pi$ about the axis containing the Weierstrass points labelled $w_1,\dots,w_6$. In each case $\iota,\tau$ commute.\\
\begin{figure}
\label{genus-2-type-(1,0)}
\centering
\includegraphics[width=93mm,height = 88mm]{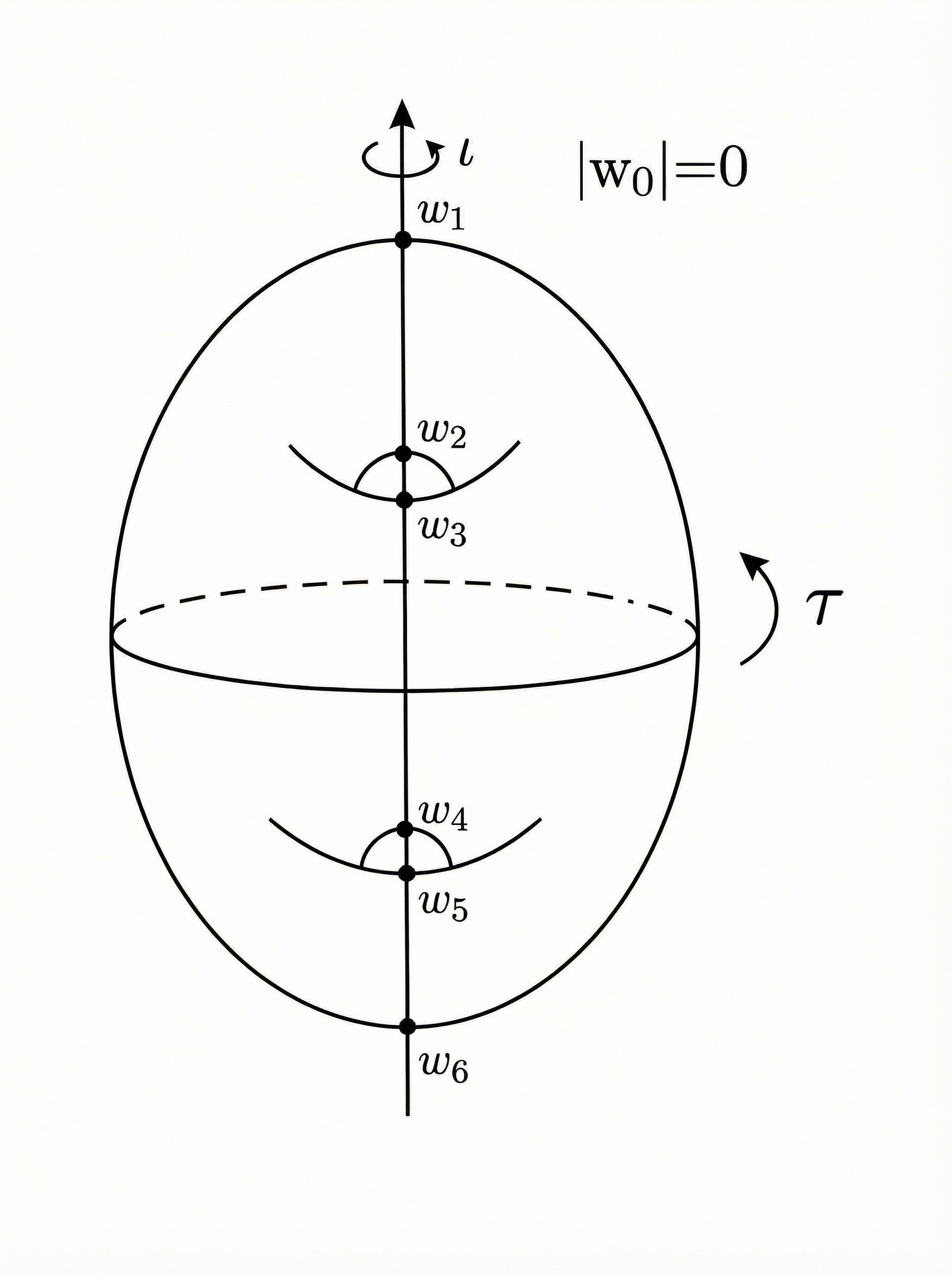}
\caption{A real hyperelliptic genus $2$ curve of type $(n,a) = (1,0)$ with one fixed circle and no real Weierstrass points.}
\end{figure}

\begin{figure}
\label{genus-2-type-(3,0)}
\centering
\includegraphics[width=90mm,height = 88mm]{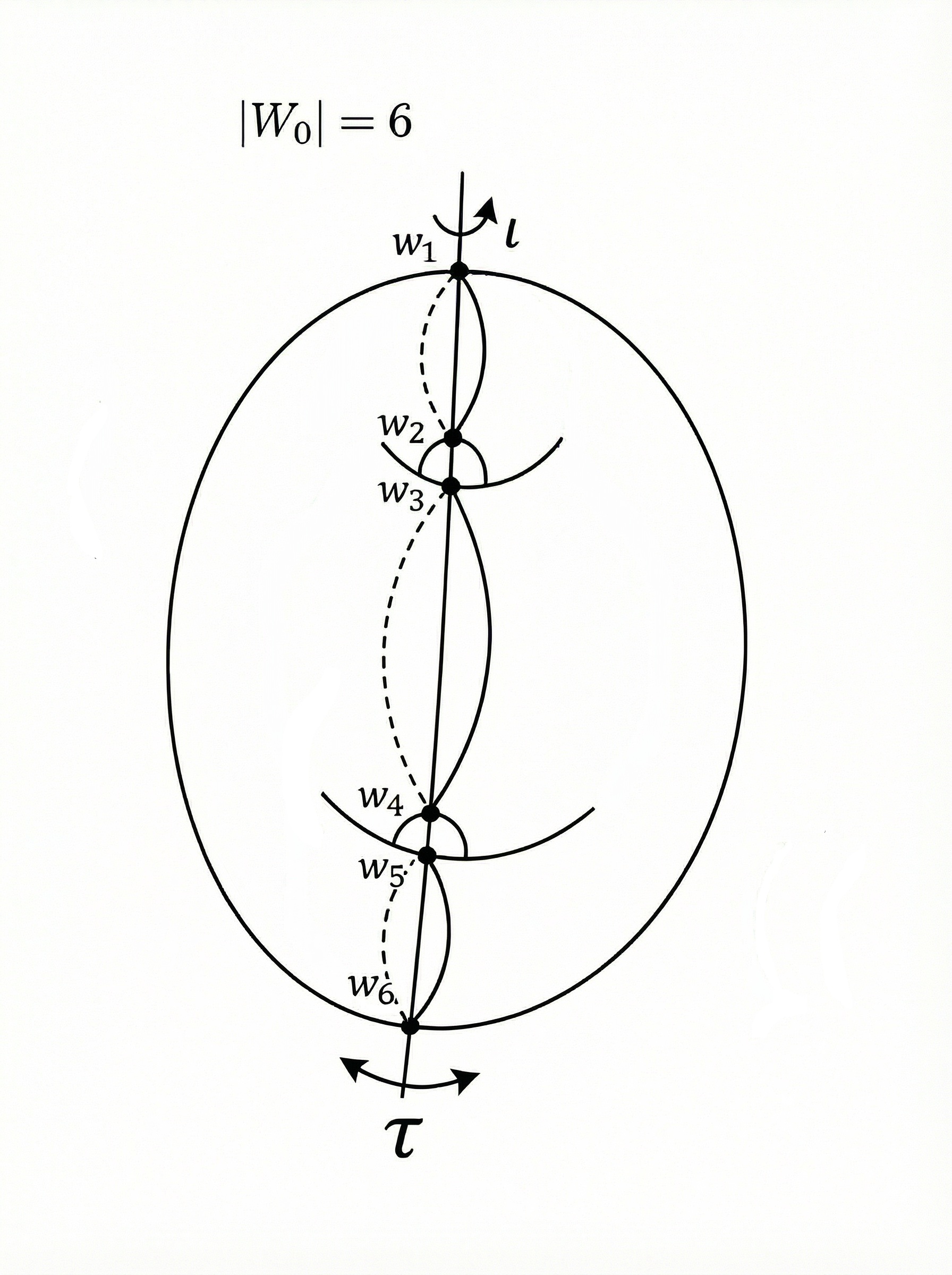}
\caption{A real hyperelliptic genus $2$ curve of type $(3,0)$ with three fixed circles and six real Weierstrass points.}
\end{figure}

\begin{figure}
\label{genus-2-types-(0,1),(1,1),(2,1)}
\centering
\includegraphics[width=175mm,height=95mm]{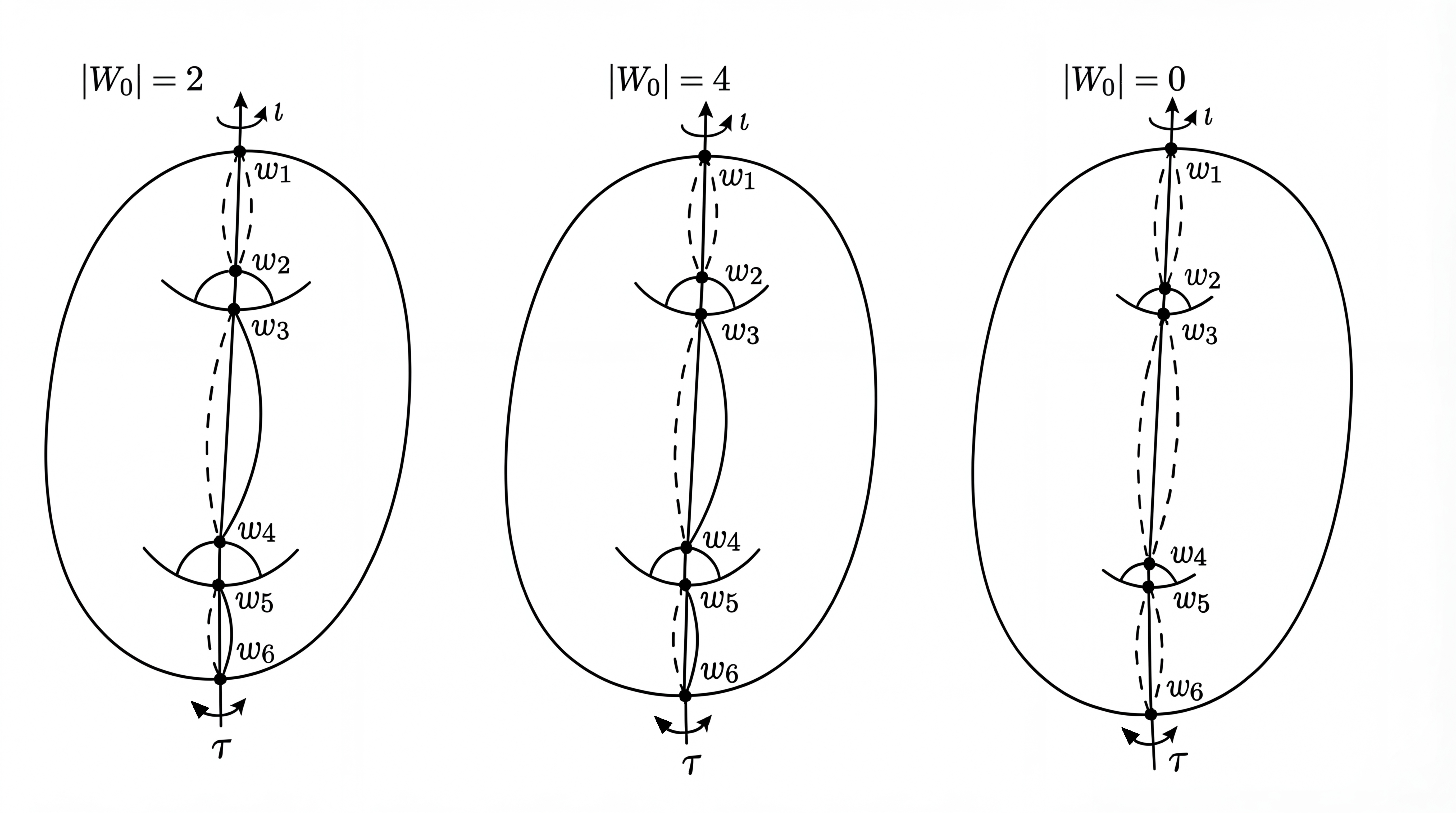}
\caption{From left to right, real genus $2$ curves of types $(1,1),(2,1),(0,1)$. These examples  have $1,2,0$ fixed circles respectively, and $2,4,0$ real Weierstrass points. Only the first two are real hyperelliptic. }
\end{figure}
\newpage

\subsection{Real holomorphic line bundles}

 Suppose now that $(\Sigma,\tau)$ is a real curve, with $\tau: \Sigma \to \Sigma$ an antiholomorphic involution. A real holomorphic line bundle $(L,\tau_L) \to (\Sigma,\tau)$ admits a $\tau_L$ invariant meromorphic section, which determines a divisor $\sum_i m_i p_i$ such that $\tau(\sum_i m_i p_i) = \sum_i m_ip_i$. We will refer to such divisors as \textit{real divisors}. We say that a circle $C \subset \Sigma^{\tau}$ is \textit{odd} for $L$ if the real line bundle $L^{\tau_L} \to C$ is a M\"obius bundle. One can characterize the odd circles (\cite{baird2020hyperelliptic}) by the property 
 \[\sum_{p_i \in C} m_i \equiv 1 \hspace{0.1 cm} \mathrm{ mod } \hspace{0.1 cm} 2\]

 i.e there are an odd number of points in $C$ counting multiplicity.

\section{Real Extension lines}
\label{Real-Extension-Lines-Section}

Let $(\Sigma,\tau)$ be a real curve of genus $g \geq 2$, and $(\Lambda,\tau_L) \to (\Sigma,\tau)$ a real line bundle of degree $1$. Following the work of Mu\~noz \cite{munoz1999quantum}, we will describe rational curves in the moduli space $M_{\Lambda}(2,1)$ of semistable bundles of rank $2$ with determinant $\Lambda$. The space $M_{\Lambda}(2,1)$ admits a symplectic structure which depends up to deformation only on the genus $g(\Sigma)$ of $\Sigma$. It holds that $\pi_2(M_{\Lambda}(2,1)) \cong \Z$. We denote by $A: S^2 \to M_{\Lambda}(2,1)$ the generator which has positive symplectic area. Let $M_{A}$ be the space of degree $1$ rational curves with respect to this generator

\[M_{A}= \{f: \bP^1 \to M_{\Lambda}(2,1) \text{ holomorphic }\mid f_*[\bP^1 ] = A\}\]

For each degree $d$, we denote by $J_d$ the component $\mathrm{Pic}^d(\Sigma)$ of the Picard group of $\Sigma$ consisting of line bundles of degree $d$. We denote by $\mathcal{L}_d \to \Sigma \times J_d$ a choice of universal line bundle on $\Sigma \times J_d$. Note that $\mathcal{L}_d$ it is not uniquely determined. Define for each $d$ the sheaf $\E_d \to J_d$ by 

\[\mathcal{E}xt_p^1(\Lambda \otimes \mathcal{L}_d^{-1},\mathcal{L}_d) = R^1p_*(\mathcal{L}_d^2 \otimes \Lambda^{-1})\]

where $p$ is the projection $p: \Sigma \times J_d \to \Sigma$. For $L \in J_d$, the fiber of $\E_d$ over $J$ is identified with the space $\mathrm{Ext}^1(\Lambda \otimes L^{-1},L)$ of extensions 

\[0 \to L \to E \to \Lambda \otimes L^{-1} \to 0\]

We focus now on the case $d=0$, in which case $\E_0$ is a bundle of rank $g$. Any nontrivial extension class as above determines a stable bundle $E$, and scaling does not change the stable isomorphism class, thus one obtains a map
\[i_{\Lambda}: \bP(\E_0) \to M_{\Lambda}(2,1)\]
where $\bP(\E_0)$ is the projectivized bundle with fibers $\bP_L := \bP(\mathrm{Ext}^1(\Lambda \otimes L^{-1},L))$. The bundle $\bP(\E_0)$ admits a natural symplectic structure, and we denote by $B$ the positive generator of $\pi_2(\bP(\E_0)) \cong \pi_2(\bP^{g-1}) \cong \Z$. One has the following fact

\begin{lemma} (\cite{munoz1999quantum}) 
\label{Degree-1-Lines} \upshape 
Let $M_B$ be the space

\[M_{B}= \{f: \bP^1 \to \bP(\E_0) \text{ holomorphic }\mid f_*[\bP^1 ] = B\}\]
Composition with $i_{\Lambda}$ induces an isomorphism $M_{B} \to M_{A}$. In particular, any element $f \in M_{A}$ factors uniquely through a map $ \bP^1 \to \bP(\E_0)$ landing in a fiber of $\bP(\E_0) \to J_0$. 
\end{lemma}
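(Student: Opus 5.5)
The plan is to build a holomorphic map $\Phi: M_B \to M_A$ given by $f \mapsto i_{\Lambda}\circ f$, and then show that $\Phi$ is bijective and that its inverse is holomorphic. Two things must be checked for well-definedness. First, $i_{\Lambda}$ is holomorphic. This holds because $\bP(\E_0)$ carries a universal extension $0 \to \mathcal{L}_0 \otimes \Ox(1) \to \mathcal{F} \to \Lambda\otimes\mathcal{L}_0^{-1} \to 0$ on $\Sigma \times \bP(\E_0)$, and this family induces a holomorphic classifying map. Second, $(i_\Lambda)_*B = A$. Restricted to a fiber $\bP_L \cong \bP^{g-1}$, the map $i_\Lambda$ pulls back the ample generator $\Theta$ of $\mathrm{Pic}(M_\Lambda(2,1))\cong\Z$ to $\Ox(1)$. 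I would verify this by computing $\det R p_*$ of the universal family restricted to a line, where it differs from the trivial family by one twist of $\Ox(1)$. Since $\pi_2(J_0)=0$, the class $B$ is represented by a line in a fiber, so $i_{\Lambda*}B$ pairs to $1$ with $\Theta$ and therefore equals $A$.

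For injectivity and factorization, I would start from $f \in M_A$ and look at the pulled-back family $\mathcal{E} \to \Sigma \times \bP^1$ of stable bundles. The universal bundle exists because the degree is odd. The key step is to produce a line bundle $L$ of degree $0$ together with a subbundle $L\boxtimes \Ox_{\bP^1}(a) \hookrightarrow \mathcal{E}$ whose quotient is again a line bundle on every slice. Restricting $\mathcal{E}$ to $\{x\}\times\bP^1$ gives a rank $2$ bundle on $\bP^1$. Because $f$ has degree $1$, the second Chern class (equivalently $c_2(\mathrm{End}\,\mathcal{E})$) satisfies $c_2 = $ const, and this pins the splitting type to $\Ox(a)\oplus\Ox(a+1)$ for generic $x$. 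The maximal summand then varies holomorphically in $x$ and glues to a line subbundle $\mathcal{N}\subset\mathcal{E}$ of the form $L\boxtimes\Ox(a+1)$, using that $\mathrm{Pic}(\Sigma\times\bP^1)=\mathrm{Pic}\Sigma\times\Z$. A degree computation $c_1\cdot c_1$ together with stability ($\deg L<1/2$) then forces $\deg L = 0$. As a result each $E_t$ is an extension of $\Lambda L^{-1}$ by the fixed $L$. The extension classes define a map $\bP^1 \to \bP_L$, which is a lift $\tilde f$ with $i_\Lambda\circ\tilde f = f$, and $\tilde f$ has degree $1$ by the previous paragraph.

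For uniqueness, I would take two lifts $\tilde f_1,\tilde f_2$. They would give distinct line subbundles $L_1,L_2$ of degree $0$ for every $E_t$ on the curve. Alternatively, if they land in the same fiber, they must agree, because $i_\Lambda$ is injective on each $\bP_L$: two proportional extension classes give the same bundle, and $\mathrm{Hom}(L,E)$ is one-dimensional for a stable $E$ with $\deg L = 0$. The case of distinct $L_1,L_2$ is handled by a rigidity argument. For a generic $t$, a degree $1$ rational curve through $E_t$ corresponds to a pair (subbundle, point of the line through the class), so distinct lifts produce distinct image curves. Finally, bijectivity of the holomorphic map $\Phi$ between these spaces, together with the construction above producing $\tilde f$ holomorphically in families, gives the isomorphism.

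I expect the main obstacle to be the splitting-type step: showing that the rank $2$ bundles $\mathcal{E}|_{\{x\}\times\bP^1}$ have a uniformly controlled maximal subbundle that assembles into a product line bundle. If the direct Chern class argument is too coarse, I would instead use the deformation theory of Mu\~noz: bound $\dim M_A$ by $H^0(f^*TM)$ and compare it with $\dim M_B = g + 2(g-1)\cdot$ (lines in $\bP^{g-1}$). An open-dense agreement would then be extended by properness.
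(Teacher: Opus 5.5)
Your overall strategy of pulling back the universal bundle to $\Sigma\times\bP^1$ and studying the vertical splitting is the right one; note the paper gives no proof and just quotes Mu\~noz. The preliminary steps are fine: holomorphy of $i_\Lambda$, $(i_\Lambda)_*B=A$, and injectivity of $i_\Lambda$ on a single fibre $\bP_L$ via $\dim\mathrm{Hom}(L,E)=1$. The gap is the factorization step. The Chern classes of $\mathcal{E}$ do not determine the generic splitting type of $\mathcal{E}|_{\{x\}\times\bP^1}$. Nor do you control the finite set where the saturated subsheaf fails to be a subbundle, and that control is exactly what ``the quotient is a line bundle on every slice'' needs. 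Stability must enter here, not only afterwards to get $\deg L=0$.

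Concretely, normalize $c_1(\mathcal{E})=\sigma+h$, where $\sigma,h$ are the point classes of $\Sigma$ and $\bP^1$. Then $\deg f=1$ gives $4c_2-c_1^2=2$ (check it on a line in $\bP_L$), so $c_2=1$. Let $\Ox(a)\oplus\Ox(1-a)$ with $a\geq 1$ be the generic vertical type. Let $\mathcal{N}$ be the saturation of the image of the evaluation map from $\pi_\Sigma^*\pi_{\Sigma*}(\mathcal{E}(-a))\otimes\Ox(a)$. Saturation only adds vertical divisors, so $\mathcal{N}=N\boxtimes\Ox(a)$ and $\mathcal{E}/\mathcal{N}\cong\mathcal{I}_Z\otimes(\Lambda N^{-1}\boxtimes\Ox(1-a))$ with $Z$ finite. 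Hence
\[
1=c_2(\mathcal{E})=c_1(\mathcal{N})\cdot c_1\bigl(\Lambda N^{-1}\boxtimes\Ox(1-a)\bigr)+\ell(Z)=a+n(1-2a)+\ell(Z),\qquad n=\deg N.
\]
Stability of a generic slice $E_t$ gives $n\leq 0$, so $n(1-2a)\geq 0$ and $\ell(Z)\geq 0$. This forces $a=1$, and then $n=\ell(Z)=0$. Without stability the numerics also allow $a=2$, $n=1$, $\ell(Z)=2$, so ``$c_2$ is constant'' alone cannot pin down the splitting type.

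Your uniqueness argument for lifts into different fibres $\bP_{L_1}\neq\bP_{L_2}$ is circular. The claim that ``distinct lifts produce distinct image curves'' is precisely the injectivity of $M_B\to M_A$ you are proving. It follows instead from $Z=\emptyset$. Every vertical restriction is then $\Ox(1)\oplus\Ox$, whose $\Ox(1)$ summand is unique, so the subbundle $L\boxtimes\Ox(1)\subset\mathcal{E}$ is determined by $\mathcal{E}$ alone. A lift into $\bP_{L'}$ yields a family $0\to L'\boxtimes\Ox(1)\to\mathcal{E}'\to\Lambda L'^{-1}\boxtimes\Ox\to 0$. Both families classify $f$ and have simple slices, so $\mathcal{E}'\cong\mathcal{E}\otimes(\Ox_\Sigma\boxtimes\mathcal{P})$ for some line bundle $\mathcal{P}$ on $\bP^1$, and $\mathcal{P}$ is trivial by the $c_1$ normalization. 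Hence $L'\cong L$, which reduces to the single-fibre case you already handled. The same construction works in families, since $h^0(\mathcal{E}|_{\{x\}\times\bP^1}(-1))=1$ for every $x$, and this gives holomorphic dependence of the inverse. Your dimension-count fallback would not close either gap. Matching dimensions and open-dense agreement cannot exclude components of $M_A$ disjoint from the image of $M_B$, and $M_A$ is not compact, so properness is not available.
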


Since $\Lambda$ is real, the real structure $\tau$ induces a real structure on $M_{\Lambda}(2,1)$ and $J_0$ by conjugate pullback $E \mapsto \tau^*(\overline{E})$. We consider the action of the real structure on the projectivized bundle $\bP(\E_0)$.

\begin{Proposition} Suppose $\tau: \Sigma \to \Sigma$ fixes $n \geq 1$ circles. The fixed point set $J^{\tau}_0$ is isomorphic to $J^{\R}_{0} \times (\Z_2)^d$, where $J^{\R}_{0}$ is a real torus of dimension $g$ and $d = 2^{n-1}$.

\end{Proposition}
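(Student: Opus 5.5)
Throughout I read the statement as saying that $J_0^{\tau}$ is a compact abelian real Lie group whose identity component is a real $g$-torus $J_0^{\R}$ and whose group of components is $(\Z_2)^{n-1}$. There are then exactly $2^{n-1}$ components, matching the count of topological real bundle types in Proposition \ref{Top-Classification-Real-Bundles}. The literal exponent $d = 2^{n-1}$ in the statement is presumably this count of components.

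\textbf{Step 1: the identity component.} Write $J_0 = H^1(\Sigma,\Ox)/H^1(\Sigma,\Z)$. The map $L \mapsto \tau^*\overline{L}$ is an antiholomorphic group automorphism of order $2$. It is induced by an antilinear involution $\sigma$ of $V = H^1(\Sigma,\Ox) \cong \C^g$ preserving the lattice $\Lambda = H^1(\Sigma,\Z)$. Its fixed locus is therefore a closed subgroup of the torus $V/\Lambda$, hence a compact abelian Lie group. Its Lie algebra is $V^{\sigma}$, which has real dimension exactly $g$ because $\sigma$ is antilinear. So the identity component $J_0^{\R}$ is a real torus of dimension $g$. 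Since a compact abelian Lie group with torus identity component splits as that torus times its finite component group, it remains to identify the component group.

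\textbf{Step 2: fixed classes are real bundles, and the map to topological types.} Since $n \geq 1$, Proposition \ref{Top-Classification-Real-Bundles} rules out quaternionic line bundles. Hence every $L$ with $\tau^*\overline{L} \cong L$ carries a lift $\tau_L$ with $\tau_L^2 = 1$, unique up to sign. The two signs are conjugate via multiplication by $i$, so they give the same topological type. We thus get a well-defined homomorphism
\[
w : J_0^{\tau} \to H^1(\Sigma^{\tau},\Z_2) \cong (\Z_2)^n, \qquad L \mapsto w_1(L^{\tau_L}).
\]
By Proposition \ref{Top-Classification-Real-Bundles}, $w$ lands in the index-two subgroup where the sum of coordinates is $0$, which is $\cong (\Z_2)^{n-1}$. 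The map $w$ is locally constant, because the Stiefel--Whitney class is constant in continuous families, so it factors through the component group.

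\textbf{Step 3: surjectivity of $w$.} I would use real divisors and the parity criterion for odd circles from the subsection on real holomorphic line bundles. Choose points $p_i \in C_i$ on the fixed circles. For a subset $S$ of circles of even size, the divisor $\sum_{i\in S} p_i - \tfrac{|S|}{2}(p_1 + p_1')$ has degree $0$, where $p_1'$ is a second point of $C_1$ (adjust so the parity on $C_1$ comes out right). This divisor is real, and its odd circles are exactly $S$. Hence every element of the sum-zero subgroup is realized, and there are at least $2^{n-1}$ components.

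\textbf{Step 4: the kernel of $w$ is connected.} This is the main obstacle. I would bound the number of components from above by group cohomology. The component group of the fixed locus of $\sigma$ on $V/\Lambda$ injects into $H^1(\Z_2;\Lambda)$, via the boundary map of the sequence $0\to\Lambda\to V\to V/\Lambda\to 0$, and $H^1(\Z_2;\Lambda)$ is of the form $(\Z_2)^k$. The task is then to compute that $k = n-1$ for the action $-\tau^*$ on $H^1(\Sigma,\Z)$. I would do this using a symplectic basis adapted to the real curve, as in Gross--Harris: the fixed circles contribute $n$ invariant cycles, subject to the one relation that their sum bounds (in the case $a=0$) or is otherwise constrained. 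If this computation is too messy, I would instead show directly that a real degree-$0$ line bundle with $w = 0$ is represented by a real divisor $D - \tau D$ or by a divisor with even multiplicity on each circle. Such divisors can be connected to $0$ by sliding points along the circles and along conjugate pairs. Together with Step 3, this gives exactly $2^{n-1}$ components, proving $J_0^{\tau} \cong J_0^{\R} \times (\Z_2)^{n-1}$.
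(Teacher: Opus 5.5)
Your reading of the statement is the right one. The component group is $(\Z_2)^{n-1}$, so there are $2^{n-1}$ components; the statement's exponent $d=2^{n-1}$ conflates the two, since the paper's own proof calls $d$ ``the number of components.''

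Your route differs from the paper's mainly in how much it actually proves. The paper cites Baraglia--Schaposnik (Lemma 27) and Gross--Harris for the splitting into a $g$-torus times a finite group. It then reads off the count from Proposition \ref{Top-Classification-Real-Bundles}, which tacitly identifies components of $J_0^{\tau}$ with topological types of degree-$0$ real line bundles. Your Steps 1--3 make that identification explicit: the Lie-group splitting, the homomorphism $w$, its local constancy, and its surjectivity via explicit real divisors. This gives a self-contained argument. The paper gains brevity but leaves unargued exactly the point you isolate in Step 4, that one topological type forms one component.

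Step 4 is where your write-up needs repair, since neither route is carried out and each has a flaw as written.

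\emph{Route one.} As sketched, it reduces to the same citation, and it puts the circles in the wrong module. For the action $-\tau^*$, the group $H^1(\Z_2;H^1(\Sigma,\Z))$ consists of $\tau^*$-invariant classes modulo norms $\beta+\tau^*\beta$. By Poincar\'e duality these correspond to $\tau_*$-\emph{anti}-invariant cycles, whereas the fixed circles are $\tau_*$-invariant. The circles enter only through the perfect pairing with $H_1(\Sigma,\Z)^{\tau}/(1+\tau_*)H_1(\Sigma,\Z)$. The upper bound then needs the circles to span that group with a single relation, and that is the real content of Gross--Harris. Also, for $a=1$ the sum of the circles is nonzero even in $H_1(\Sigma;\Z_2)$, so ``their sum bounds'' is not the relation there.

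\emph{Route two.} The divisor $D-\tau D$ is anti-invariant, not real, so drop it. The even-multiplicity version does work:
\begin{enumerate}
\item write the real divisor as $D_+-D_-$, with $D_{\pm}$ real effective of equal parity on each circle;
\item slide both to one standard divisor, moving points along the circles and moving conjugate pairs $q+\tau q$ through $\Sigma\setminus\Sigma^{\tau}$;
\item use $q+\tau q\to 2p$ as $q\to p\in\Sigma^{\tau}$ to pass between the two kinds of moves.
\end{enumerate}

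\emph{Quickest fix.} The real $\overline{\partial}$-operators on a fixed $C^\infty$ real line bundle form a nonempty affine space. By Proposition \ref{Top-Classification-Real-Bundles}, degree and $w_1$ determine the $C^\infty$ real bundle, so each fiber of $w$ is connected. This is precisely what justifies the paper's one-line count.

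A minor point in Step 2: lifts with $\tau_L^2=1$ are unique up to $U(1)$, not just up to sign. They are all conjugate by scalars, since $\mu\tau_L\mu^{-1}=\mu^2\tau_L$ for $|\mu|=1$, so $w$ is still well defined.
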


\begin{proof}
That $J^{\tau}$ has the form $J^{\R}_{\tau} \times (\Z_2)^d$ is proven in (\cite{baraglia2014higgs} Lemma 27, \cite{gross1981real}). The number of components $d$ is $2^{n-1}$ by Proposition \ref{Top-Classification-Real-Bundles}.

\end{proof}

\noindent Applying the involution $E \mapsto \tau^*(\overline{E})$ sends an extension

\[0 \to L \to E \to \Lambda \otimes L^{-1} \to 0 \]
to an extension \[0 \to \tau^*(\overline{L}) \to \tau^*(\overline{E}) \to \Lambda \otimes \tau^*(\overline{L^{-1}}) \to 0\]

\noindent This gives a complex antilinear map $\mathrm{Ext}^1(\Lambda \otimes L^{-1},L) \to \mathrm{Ext}^1(\Lambda\otimes \tau^*(\overline{L})^{-1},\tau^*(\overline{L}))$. We obtain the following result 

\begin{Proposition}
\upshape The bundle $\E$ admits a real structure $\tau_{\E}: \E \to \E$ over the real variety $J_0$ which induces an action $\tau: \bP(\E_0) \to \bP(\E_0)$ for which the fixed point set $\bP(\E)^{\tau}$ admits a fibration

\[\R\bP(\E^{\tau}) \to J^{\tau}_0\]
where the fibers are real projective spaces of dimension $g-1$.
\end{Proposition}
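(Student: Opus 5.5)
The plan is to build $\tau_{\E}$ directly from the antilinear maps on fibers described just above, check that it is an involution covering the real structure on $J_0$, and then identify the fixed locus fiberwise. Fix a real lift $\tau_\Lambda$ on $\Lambda$ with $\tau_\Lambda^2=1$; this exists because $\Lambda$ is real of odd degree and, by Proposition \ref{Top-Classification-Real-Bundles}, there are no quaternionic odd-degree bundles when $n\geq 1$. The construction has three parts. First, for each $L\in J_0$, Serre duality identifies
\[
\mathrm{Ext}^1(\Lambda\otimes L^{-1},L)=H^1(\Sigma, L^2\otimes\Lambda^{-1})\cong H^0(K\otimes\Lambda\otimes L^{-2})^{*}.
\]
On Dolbeault representatives, the map $\alpha\mapsto \tau_{L^2\Lambda^{-1}}\circ\overline{\tau^*\alpha}$ is complex antilinear and sends $H^1(L^2\Lambda^{-1})$ to $H^1(\tau^*\overline{L}^{\,2}\Lambda^{-1})$. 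Here $\tau^*\overline{L}$ is canonically the bundle $\overline{\tau^* L}$ with the conjugate holomorphic structure. These fiberwise maps are canonical: they do not depend on a choice of lift on $L$, because $L$ enters only through the tautological identification $\tau^*\overline{L}$. So they glue to an antiholomorphic bundle map $\tau_\E:\E_0\to\E_0$ covering $L\mapsto\tau^*\overline{L}$ on $J_0$.

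Second, I check $\tau_\E^2=1$. Applying the construction twice gives pullback by $\tau^2=\mathrm{id}$, conjugated twice, twisted by $\tau_\Lambda^{-2}=1$. The only subtlety is the choice of universal bundle $\mathcal{L}_0$. This is the main obstacle: $\mathcal{L}_0$ need not be real, so $\E_0=R^1p_*(\mathcal{L}_0^2\otimes\Lambda^{-1})$ is only defined up to twisting by a line bundle pulled back from $J_0$. The fix I would use is the following. Over the real locus one only needs the fibers, and there $L\cong\tau^*\overline{L}$. Choosing a real lift $\tau_L$ with $\tau_L^2=\pm1$, the lift $\tau_L^{\otimes 2}$ on $L^2$ squares to $+1$ whatever the sign. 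So the induced antilinear map on $H^1(L^2\Lambda^{-1})$ is an honest involution, independent of the choice of $\tau_L$, since rescaling $\tau_L$ by a unit $\lambda$ multiplies it by $\lambda^2$, which is a positive real after normalizing. For the projectivization the scalar ambiguity disappears completely, so $\tau$ on $\bP(\E_0)$ is well defined globally, even where $\E_0$ itself depends on the normalization.

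Third, I identify the fixed locus. A point of $\bP(\E_0)$ fixed by $\tau$ lies over a fixed point $L\in J_0^\tau$. On that fiber $\tau$ is induced by an antilinear involution $\sigma_L$ of the $g$-dimensional space $\mathrm{Ext}^1$; the rank is $g$ by Riemann–Roch, since $h^0(L^2\Lambda^{-1})=0$ in degree $-1$. The fixed lines of $\bP(V)$ under $[\sigma]$ are exactly the complex lines spanned by vectors in $V^{\sigma}$. Indeed, if $\sigma v=\mu v$, then $\sigma^2=1$ forces $|\mu|=1$, and rescaling $v$ by a square root of $\mu$ gives a fixed vector. Also $V^\sigma$ is a real form of dimension $g$, so the fiber of the fixed set is $\R\bP(V^\sigma)\cong\R\bP^{g-1}$. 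As $L$ varies over each component of $J_0^\tau$, the spaces $V^\sigma$ form a real vector bundle $\E^\tau$: it is locally trivial because $\sigma_L$ varies smoothly and $\E_0^\tau\otimes\C=\E_0$. Projecting then gives the claimed fibration $\R\bP(\E^\tau)\to J_0^\tau$.
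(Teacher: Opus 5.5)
Your third step is correct: a $[\sigma]$-fixed point of $\bP(V)$ is spanned by a $\sigma$-fixed vector, and $V^{\sigma}$ is a real form, so each fiber of the fixed set is $\R\bP^{g-1}$. The gap is the global claim in the statement, that $\E$ itself carries a real structure $\tau_{\E}$, so that $\E^{\tau}$ is a real vector bundle over $J_0^{\tau}$.

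\begin{itemize}
\item In your first step, the fiber of $\E_0$ over $[L]$ is $H^1\bigl((\mathcal{L}_0|_{\Sigma\times\{L\}})^{2}\otimes\Lambda^{-1}\bigr)$. The bundle $\tau^*\overline{\mathcal{L}_0|_{\Sigma\times\{L\}}}$ is identified with $\mathcal{L}_0|_{\Sigma\times\{\tau^*\overline{L}\}}$ only up to a nonzero scalar. So your fiberwise maps are not canonical and do not glue without a consistent choice.
\item You notice this in the second step, but the repair rests on a false claim. If $\tau_L^2=\pm1$, the other lifts with the same normalization are $\lambda\tau_L$ with $|\lambda|=1$, because $(\lambda\tau_L)^2=|\lambda|^2\tau_L^2$. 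These multiply $\sigma_L$ by $\lambda^2$, which is an arbitrary element of $U(1)$, not a positive real.
\item Since $(\mu\sigma_L)^2=\sigma_L^2$ for every unit $\mu$, no normalization can remove this phase. Hence $V^{\sigma_L}$ is defined only up to rotation by some $e^{i\theta}$. That leaves $\R\bP(V^{\sigma_L})\subset\bP(V)$ well defined, but gives neither an involution of $\E$ nor a real bundle $\E^{\tau}$.
\item Your local-triviality argument appeals to $\E_0^{\tau}\otimes\C=\E_0$, which presupposes exactly the structure that is missing.
\end{itemize}

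The missing idea is to make the Poincar\'e bundle itself real.
\begin{itemize}
\item Normalize $\mathcal{L}_0$ to be trivial on $\{x_0\}\times J_0$ for a real point $x_0\in\Sigma^{\tau}$, and write $\tau_J(L)=\tau^*\overline{L}$.
\item Then $(\tau\times\tau_J)^*\overline{\mathcal{L}_0}$ restricts to $L$ on each $\Sigma\times\{L\}$ and is trivial on $\{x_0\}\times J_0$. By the seesaw principle it is isomorphic to $\mathcal{L}_0$, which is the same as an antiholomorphic lift $\tilde\tau$ of $\tau\times\tau_J$.
\item $\tilde\tau^2$ is a holomorphic automorphism of a line bundle over a compact connected base, hence a constant $c$. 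Commuting with $\tilde\tau$ forces $c\in\R$, and rescaling gives $\tilde\tau^2=\pm1$.
\item Now your squaring trick works globally: $\tilde\tau^{\otimes 2}$, tensored with the induced real structure on $\Lambda^{-1}$, is an honest real structure on $\mathcal{L}_0^2\otimes\Lambda^{-1}$.
\item The fiberwise $h^0$ vanishes, so $R^1p_*$ is locally free of rank $g$ and commutes with base change. This induces $\tau_{\E}$ with $\tau_{\E}^2=1$, and your third step then identifies $\bP(\E_0)^{\tau}$ with $\R\bP(\E^{\tau})$.
\end{itemize}

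The paper only records the antilinear map on $\mathrm{Ext}^1$ induced by $E\mapsto\tau^*(\overline{E})$ and never addresses the universal-bundle ambiguity. So with this repair your argument is more complete than the paper's, but as written it does not produce $\tau_{\E}$.
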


As a consequence of Lemma \ref{Degree-1-Lines}, we can describe the rational curves $f: \bP^1 \to M_{\Lambda}(2,1)$ in $M_A$ which have image preserved by $\tau$.

\begin{theorem}
\label{Deg-1-Real-Rational-Curves}
Let $f \in M_A$ be a rational curve $f: \bP^1 \to M_{\Lambda}(2,1)$ with image preserved by $\tau$. Then there is a real line bundle $L$ so that $f$ is a composition
\[\bP^1 \xrightarrow[]{h} \bP_L \to \bP(\E_0)\]
where $h$ has degree $1$ and $\bP_L$ is the fiber of $\bP(\E_0)$ over the isomorphism class $[L] \in J_0$ as above. Moreover, up to a change of coordinates, $f$ satisfies $\tau \circ f = f \circ c$, where $c: \bP^1 \to \bP^1$ is the complex conjugation on $\bP^1$. In particular, the image of $f$ intersects the fixed point locus $M_{\Lambda}(2,1)^{\tau}$ in a real line identified with $\R\bP^1$.

\end{theorem}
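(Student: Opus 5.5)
By Lemma \ref{Degree-1-Lines}, $f = i_\Lambda \circ g$ for a unique $g \in M_B$, and $g$ lands in a single fiber $\bP_L$. The plan is to transport the real structure to $\bP(\E_0)$ and use this uniqueness twice: once to see that $L$ is real, and once to see that $\tau$ acts on $\bP^1$ by an antiholomorphic involution.

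\textbf{Step 1: $L$ is real.} The antiholomorphic involution $\tau$ on $M_{\Lambda}(2,1)$ induces an antiholomorphic involution $\tilde\tau$ on $\bP(\E_0)$, described in the preceding Proposition. It sends the extension class $[0\to L\to E\to \Lambda L^{-1}\to 0]$ to the conjugate-pulled-back class, so $i_\Lambda\circ\tilde\tau = \tau\circ i_\Lambda$. Set $f' := \tau\circ f\circ c$. This is holomorphic, has the same image as $f$, and has class $A$: its class is $\pm A$, and it is the image of a holomorphic curve, hence has positive area. We also have $f' = i_\Lambda\circ(\tilde\tau\circ g\circ c)$ with $\tilde\tau\circ g\circ c\in M_B$. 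By the uniqueness in Lemma \ref{Degree-1-Lines}, this lift lies in the fiber over $\tau^*\overline{L}$. On the other hand, $f'$ has the same image as $f$, so $f' = f\circ\phi$ for a biholomorphism $\phi$ of $\bP^1$; here we use that $f$ has degree one and is therefore injective onto its image. Then $g\circ\phi$ is also a lift of $f'$, and it lies in $\bP_L$. Uniqueness forces $\tau^*\overline{L}\cong L$, so $[L]\in J_0^\tau$ and $L$ is real. Moreover $h=g$ is a degree-one map into $\bP_L$, i.e.\ a line.

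\textbf{Step 2: normalizing the real structure on $\bP^1$.} Put $\sigma := \phi\circ c$. By construction $\tau\circ f = f\circ\sigma$, and $\sigma$ is antiholomorphic. Since $\tau^2=1$ and $f$ is injective, $\sigma^2 = 1$. An antiholomorphic involution of $\bP^1$ is conjugate either to $c$ or to the antipodal map. To rule out the antipodal map, note that $\tilde\tau$ restricts on $\bP_L$ to a projectivized antilinear map. The first thing to try is the preceding Proposition: it says $\tilde\tau$ on $\bP_L$ comes from a real structure on $\E$, so the real points of $\bP_L$ form an $\R\bP^{g-1}$. The line $h(\bP^1)$ is $\tilde\tau$-invariant, so it is the projectivization of a $\tau_\E$-invariant 2-dimensional subspace $V$. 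Restricted to $V$, $\tau_\E$ is an antilinear involution, so $V = V^{\R}\otimes\C$ with $V^{\R}$ of real dimension 2. Hence the fixed set of $\tilde\tau$ on the line is $\R\bP^1$, and $\sigma$ has fixed points. Therefore $\sigma$ is conjugate to $c$, and after a coordinate change $\tau\circ f = f\circ c$.

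\textbf{Step 3: the real locus.} The image of $f$ meets $M_{\Lambda}(2,1)^\tau$ in $f(\R\bP^1)$, which is a real line. Here we use that $f$ is injective, so every fixed point of $\tau$ on the image comes from a fixed point of $c$.

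\textbf{Main obstacle.} The delicate point is Step 2: making sure $\tilde\tau$ genuinely squares to $+1$ on $\E_L$, rather than being quaternionic. The preceding Proposition asserts this. If needed, it can be checked directly: $\mathrm{Ext}^1 = H^1(L^2\Lambda^{-1})$, and $L^2\Lambda^{-1}$ has odd degree on a curve with real points. By Proposition \ref{Top-Classification-Real-Bundles} it therefore cannot be quaternionic, so the induced structure on $H^1$ is a real structure.
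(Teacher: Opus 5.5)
Your proposal is correct and follows the paper's route: uniqueness of the lift in Lemma \ref{Degree-1-Lines} forces $\tau^*\overline{L}\cong L$, and you then exclude the antipodal real structure on $\bP^1$. Your Step 2 spells out what the paper compresses into ``since $g$ has degree $1$'': the invariant line is the projectivization of a $\tau_{\E}$-stable plane, so it has real points, because $L^2\Lambda^{-1}$ cannot be quaternionic on a curve with real points. The one claim you assert without proof, injectivity of $f$, does hold, since $i_\Lambda$ is injective on each fiber $\bP_L$: two non-proportional classes giving the same $E$ would yield two independent maps $L\to E$, but stability of the degree-one bundle $E\otimes L^{-1}$ forces $h^0(E\otimes L^{-1})\le 1$.
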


\begin{proof}
By Lemma $3.1$, there is some $L \in J_0$ such that $f$ factors through a degree $1$ map to $\bP_L$ and $\tau \circ f$ factors through a map to $\bP_{\tau^*(\overline{L})}$. By injectivity of the map $M_B \to M_A$, we must then have $L \cong \tau^*(\overline{L})$. On $\bP^1$, there are only two real structures, which are up to a change of coordinates identified with either complex conjugation $c$ or the antipodal map $a:  [z_1,z_2] \mapsto [-\overline{z}_2,\overline{z}_1]$. Thus we have $\tau \circ f = f \circ a$ or $\tau \circ f = f \circ c$, and since $g$ has degree $1$, the latter case must hold.

\end{proof}

From Theorem \ref{Deg-1-Real-Rational-Curves}, we see that a real bundle $E \in M_{\Lambda}(2,1)^{\tau}$ is contained in a real rational curve of degree $1$ if and only if $E$ contains a real line subbundle of degree $0$.\\

\noindent \textbf{Remark:} The action of $\tau$ on extension spaces can be understood more concretely with Dolbeaut cohomology. For an extension
\[0 \to L \to E \to L^{-1}\otimes \Lambda \to 0\]
of holomorphic bundles, one can choose a splitting of $E$ as a $C^{\infty}$ bundle, in which case the operator $\overline{\partial_E}$ of $E$ can be written as 
\[
\begin{bmatrix}
\overline{\partial}_L & B\\
0 & \overline{\partial}_{L^{-1}\otimes \Lambda}
\end{bmatrix}
\]
Where $B \in \Omega^{0,1}(L^2 \otimes \Lambda^{-1})$ is a Dolbeaut representative of the extension class. We can apply $\tau$ componentwise to this matrix, which sends $B$ to $\tau^*(\overline{B}) \in \Omega^{0,1}(\tau^*(\overline{L})^2 \otimes \Lambda^{-1})$. In local coordinates, writing $\overline{\partial}_L = \overline{\partial}+A$ with $A \in \Omega^{0,1}$, we have $\tau^*(\overline{\partial}_L) = \overline{\partial}+ \tau^*(\overline{A})$ as a connection on the bundle $\tau^*(\overline{L})$.
\section{Real Extension lines and subbundles in genus $2$}
\label{Section-Genus-2-Nonsingular}
Throughout this section we will assume that $(\Sigma,\tau)$ is a real hyperelliptic curve of genus $2$. In this case, work of Newstead \cite{newstead1968stable} gives an explicit description of the moduli space $M_{\Lambda}(2,1)$ and the map $i_{\Lambda}: \bP(\E_0) \to M_{\Lambda}(2,1)$. Note that in genus $2$, both $\bP(\E_0)$ and $M_{\Lambda}(2,1)$ have the same complex dimension $3$.

\begin{Proposition} (\cite{newstead1968stable})
\label{Newstead-Description}\upshape 
Let $\Sigma$ be a Riemann surface of genus $2$, and let $\lambda_1,\dots,\lambda_6 \in \bP^1$ be the coordinates of the Weierstrass points of $\Sigma$ in an affine coordinate system $\bP^1 = \C \cup \{\infty\}$. The moduli space $M_{\Lambda}(2,1)$ is isomorphic to the intersection in $\bP^5$ of the two quadrics
\[x_1^2+\dots+x_6^2, \lambda_1 x_1^2 + \dots + \lambda_6 x_6^2\]
The map $i_{\Lambda}: \bP(\E_0) \to M_{\Lambda}(2,1)$ is surjective and generically finite of degree $4$. In particular, a generic element of $M_{\Lambda}(2,1)$ is contained in $4$ rational curves of degree $1$ by Lemma \ref{Degree-1-Lines}.
\end{Proposition}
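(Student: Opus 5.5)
This is Newstead's theorem, and I would reconstruct it in three steps: identify the moduli space with the quadric intersection, recover the extension map from the lines on it, and compute the degree of that map.

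\textbf{Step 1: the quadric intersection.} Let $Q_1 = \sum x_i^2$ and $Q_2 = \sum \lambda_i x_i^2$, and consider the pencil $Q_1 - t Q_2$, $t \in \bP^1$. A member of the pencil is singular exactly when $t = \lambda_i^{-1}$, so the singular members are indexed by the Weierstrass points. The double cover of the pencil parametrizing the two rulings of the smooth quadrics is branched exactly over these six points, and is therefore the curve $\Sigma$ itself. Classical results on intersections $X = Q_1 \cap Q_2 \subset \bP^5$ then give:
\begin{itemize}
\item the variety of lines $F(X)$ is isomorphic to $\mathrm{Jac}(\Sigma)$, hence to $J_0$;
\item through a general point of $X$ there pass exactly $4$ lines.
\end{itemize}
The second fact is a direct count. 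The lines through a point $x \in X$ sweep out the intersection of $X$ with the tangent cone, and in $T_xX \cong \bP^3$ this is cut out by two quadrics and the two tangent conditions. This amounts to $2 \cdot 2 = 4$ points in the $\bP^2$ of directions at $x$.

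\textbf{Step 2: matching with the extension description.} On the moduli side, I would first verify that $M_{\Lambda}(2,1)$ is a smooth Fano threefold of index $2$ with $\pi_2 \cong \Z$ and Picard rank one. Using Lemma~\ref{Degree-1-Lines}, the space of degree-$1$ rational curves $M_A$ is identified with $M_B$. Each fiber $\bP_L \cong \bP^1$, since $\dim \mathrm{Ext}^1(\Lambda \otimes L^{-1}, L) = h^1(L^2 \Lambda^{-1}) = 2$ in genus $2$ by Riemann--Roch. The lines are therefore parametrized by $J_0$. This is consistent with identifying $M_{\Lambda}(2,1)$ with $X$ through its anticanonical half embedding: the generator $\Theta$ of $\mathrm{Pic}$ embeds $M_{\Lambda}(2,1)$ in $\bP^5$, with $h^0 = 6$ by Verlinde or Riemann--Roch.

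The hardest part is proving that the image is exactly the intersection of two quadrics with the stated $\lambda_i$. Newstead's argument uses the pencil of quadrics attached to the $\iota$-invariant structure. My first attempt would be to compute $\Theta^3 = 4$, which would force the image to be a degree-$4$ threefold in $\bP^5$. I would then exhibit two quadrics vanishing on it via the symmetric structure coming from the six odd theta characteristics $w_i - w_j$ twists.

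\textbf{Step 3: surjectivity and degree.} Every stable $E$ of degree $1$ has a degree-$0$ line subbundle. By Riemann--Roch, $\chi(E \otimes L^{-1}) = 1 + 2(1 - 2) = -1$, which is negative, so this needs a different argument. Consider instead the Segre-invariant bound: for rank $2$, the maximal subbundle degree $s$ satisfies $d - 2s \le g$. This gives $1 - 2s \le 2$, so $s \ge 0$. Hence $i_{\Lambda}$ is surjective.

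The degree equals the number of preimages of a general point, which is the number of lines through it. By Step 1 this number is $4$, which gives the generic finiteness of degree $4$.
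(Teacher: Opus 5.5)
The paper gives no proof of this proposition; it simply cites Newstead. Your proposal therefore has to stand on its own.

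\textbf{What works.} The surjectivity argument in Step~3 is correct. Nagata's bound gives a line subbundle of degree $\ge 0$, and stability caps its degree at $0$. The quotient is then $\Lambda L^{-1}$, and the extension is non-split.

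\textbf{The gap: Step~2.} This step is the whole content of Newstead's theorem, and you only sketch it. Two things are missing.
\begin{enumerate}
\item $\Theta^3=4$ for an embedding in $\bP^5$ only gives a nondegenerate threefold of degree $4$, not a complete intersection of two quadrics. You would also need to prove that $\Theta$ is very ample. Beyond that, you need an input such as $h^0(\Ox_M(2\Theta))=h^0(-K_M)=19<21=h^0(\Ox_{\bP^5}(2))$, which produces two independent quadrics through $M$. A degree comparison then shows they cut out $M$. Alternatively, quote the Fujita--Iskovskikh classification of del Pezzo threefolds.
\item More seriously, nothing in your argument pins down the pencil. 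Any smooth $(2,2)$ complete intersection can be written as $\sum x_i^2=\sum \mu_i x_i^2=0$. The theorem says precisely that the hyperelliptic curve $C$ branched at the $\mu_i$ is $\Sigma$. Your consistency check compares $F(X)\cong \mathrm{Jac}(C)$ with the fact that lines on $M$ are parametrized by $J_0$. That yields at best an isomorphism of unpolarized abelian surfaces, which does not determine a genus $2$ curve. You need an invariant that sees the curve itself. One option is the intermediate Jacobian as a principally polarized abelian variety, combined with Torelli. Another is the curve of lines meeting a fixed line. On the moduli side, Lemma~\ref{Lange-Narasimhan-Lemma} identifies it with $\{i_\Lambda(\bP_{L^{-1}\Lambda(-p)})\}_{p\in\Sigma}\cong\Sigma$. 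On the quadric side it is classically $C$, via projection from the line.
\end{enumerate}

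\textbf{A misidentification.} Your appeal to ``the six odd theta characteristics $w_i-w_j$'' is confused. In genus $2$ the odd theta characteristics are the six $\Ox(w_i)$. The bundles $\Ox(w_i-w_j)$ are the fifteen nonzero $2$-torsion points. The tensoring action of $J_0[2]$ does split $H^0(\Theta)$ into six eigenlines, which forces the invariant quadrics to be diagonal. It says nothing about their coefficients.

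\textbf{The degree count.} Your degree computation in Step~3 counts lines on $X$, so it inherits the gap above. You can make it independent of the quadric model using tools already in the paper.
\begin{itemize}
\item From the commutative square of Section~4.1: $\deg i_S=\deg \bP=16$ (the latter uses Lemma~\ref{Fix-Point-Codim-Lemma}) and $\deg i_P=4$ by Lemma~\ref{Atiyah-Theorem}. Since $i_P\circ i_S=\bP\circ i_\Lambda$, this forces $\deg i_\Lambda=4$.
\item Alternatively, by Lemma~\ref{Lange-Narasimhan-Lemma}, a general $E\supset L$ has exactly three further degree-$0$ subbundles $L^{-1}\Lambda(-p)$. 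There is one for each point $p$ in the fibre over $[E]$ of the degree-$3$ map $\Sigma\to\bP_L$.
\end{itemize}
With this, surjectivity and generic degree $4$ are established, and hence the four lines through a generic point follow via Lemma~\ref{Degree-1-Lines}. What your write-up does not yet establish is the isomorphism of $M_{\Lambda}(2,1)$ with the specific quadric intersection determined by the Weierstrass points.
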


Later work of Desale-Ramanan \cite{desale1976classification} in higher genus shows that the moduli space $M_{\Lambda}(2,1)$ can be recovered from the intersection of two quadrics for all hyperelliptic curves. In \cite{baird2020hyperelliptic}, it is proven that one can recover the real moduli space $M_{\Lambda}(2,1)^{\tau}$ as an intersection of real quadrics

\begin{Proposition}(\cite{baird2020hyperelliptic})
\label{Real-Hyperelliptic-Genus2-Types}
\upshape 
Let $(\Sigma,\tau)$ be a real hyperelliptic curve of genus $2$ and $(\Lambda,\tau_{\Lambda})$ a real line bundle on $(\Sigma,\tau)$. The moduli space $M_{\Lambda}(2,1)^{\tau}$ is isomorphic to the intersection of two real quadrics in $\R\bP^5$. For a real hyperelliptic curve with $2m = |W_0|$ real Weierstrass points, such that $\Lambda$ has $k$ odd circles, the diffeomorphism type of $M_{\Lambda}(2,1)^{\tau}$ is as follows

\begin{center}
\begin{tabular}{|c|c|c|}
\hline
$(\Sigma,\tau)$ type & $(m,k)$ & Diffeomorphism type \\
\hline 
$(1,0)$ & $(0,1)$ & $L(4,1)$\\
\hline 
$(1,1)$ & $(1,1)$ & $S^1 \times S^2$\\
\hline 
$(2,1)$ & $(2,1)$ & $\#_2 (S^1 \times S^2)$\\
\hline 
$(3.0)$ & $(3,1)$ & $\#_3 (S^1 \times S^2)$\\
\hline
$(3,0)$ & $(3,3)$ & $T^3$\\
\hline
\end{tabular}
\end{center}

In the leftmost column we include the topological type of the real curve $(\Sigma,\tau)$. These cover all real hyperelliptic types in the figures in section \ref{Genus-2-Figures}. The curves of type $(3,0)$ split into two cases depending on the topological type of the determinant $\Lambda$.
\end{Proposition}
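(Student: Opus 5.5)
The plan is to realize the real locus of Newstead's pencil explicitly and read off its topology. By Proposition \ref{Newstead-Description}, $M_{\Lambda}(2,1)$ is the base locus $Q_0\cap Q_\infty\subset\bP^5$ of the pencil $Q_t=\sum_i(\lambda_i-t)x_i^2$. The first step is to track how the antiholomorphic involution $E\mapsto\tau^*(\overline{E})$ acts in these coordinates. Since $\tau$ descends to $c$ on $\bP^1$, it permutes the $\lambda_i$ by conjugation. The coordinates $x_i$ correspond to the Weierstrass points (via the theta-characteristic/$2$-torsion description in Newstead and Desale--Ramanan), so the induced real structure on $\bP^5$ has the form $x_i\mapsto \epsilon_i\overline{x_{\sigma(i)}}$. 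Here $\sigma$ is the permutation of $W$ induced by $\tau$, and $\epsilon_i\in\{\pm1\}$ are signs determined by the topological type of $\Lambda$, i.e.\ by its odd circles, equivalently by the parity data of a real divisor representing $\Lambda$.

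The second step is a linear change of coordinates that makes the real structure standard. For each conjugate pair $\lambda_j,\overline{\lambda_j}$ we set $x_j=u+iv$ and $x_{\sigma(j)}=u-iv$, and for real $\lambda_i$ with $\epsilon_i=-1$ we replace $x_i$ by $ix_i$. After this, the real points are $\R\bP^5$ and the two quadrics become real quadratic forms:
\begin{itemize}
\item $q_1=\sum \pm y_i^2 + \sum 2(u_j^2-v_j^2)$;
\item $q_2=\sum\pm\lambda_i y_i^2+\sum 2\,\mathrm{Re}\bigl(\lambda_j(u_j+iv_j)^2\bigr)$.
\end{itemize}
The sign pattern is governed by $(m,k)$. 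This is precisely the setting of the real-quadric computation I expect to be quoted from \cite{baird2020hyperelliptic}.

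The third step computes the diffeomorphism type of $\{q_1=q_2=0\}\subset\R\bP^5$. I would use the classical method (Agrachev, or L\'opez de Medrano and Krasnov) for intersections of two real quadrics. One looks at the real pencil $t\mapsto q_2-tq_1$ over $t\in\R\bP^1$ and records its index function, that is, the signature as $t$ crosses the real roots $\lambda_i$; the complex roots contribute no sign changes. The cohomology, and in dimension $3$ the diffeomorphism type, is determined by this index data. Concretely, $2m$ real crossings with index jumps dictated by the $k$ odd signs give the following:
\begin{itemize}
\item $(3,3)$, with alternating signs, gives a product of circles, $T^3$;
\item the cases with $k=1$ give a connected sum $\#_m(S^1\times S^2)$;
\item $m=0$ gives a fibration over a circle with fiber degenerating to a lens space $L(4,1)$, seen directly through the $\pi_1=\Z_4$ of a quotient of $S^3$.
\end{itemize}
To confirm each case I would check $H_1$ via a Mayer--Vietoris/fibration argument over the pencil parameter, and cross-check with the Euler characteristic (zero) and the orientability of each model.

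The main obstacle is the first step: pinning down the signs $\epsilon_i$ from the topological type of $\Lambda$, since a sign error changes the index function and therefore the answer (for instance $(3,1)$ versus $(3,3)$). I would first determine them by choosing $\Lambda=\Ox(p)$ for a real point $p$ and following Newstead's identification of the coordinates with $2$-torsion data explicitly. I would then extend to other $\Lambda$ by tensoring with real $2$-torsion line bundles, which flips exactly the signs attached to the Weierstrass points on the corresponding circles.
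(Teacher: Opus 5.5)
The paper gives no proof of this proposition; it quotes it from Baird. Your outline therefore has to stand on its own.

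\textbf{Steps 1--2.} These are fine. The real structure preserves the pencil, so it permutes the vertices of the six singular members. It also forces the coefficients to have equal squares, so they can be normalized to $\pm1$.

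\textbf{Step 3.} This is where the content lies, and it restates the table rather than deriving it. Its one explicit identification is wrong.

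When all six $\lambda_i$ are real, the forms are diagonal. L\'opez de Medrano's normal form for $\sum_i \epsilon_i(1,\lambda_i)y_i^2=0$ on $S^5$ then describes the double cover $Z$ of the real locus.
\begin{itemize}
\item Strictly alternating signs, $\epsilon=(+,-,+,-,+,-)$, give the index function $3,4,3,4,\dots$. This is the pentagon configuration with multiplicities $(2,1,1,1,1)$, so $Z\cong\#_5(S^1\times S^2)$ and $b_1(Z)=5$. Every double cover of $T^3$ has $b_1\in\{3,6\}$, so this real locus is not $T^3$.
\item A product of circles comes instead from the triangle configuration $(2,2,2)$, for example $\epsilon=(+,+,-,-,+,+)$. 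Its index function $2,3,4,3,2,3,4$ rises and falls in steps of two.
\end{itemize}

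The checks you propose also cannot separate the two $(3,0)$ rows. $T^3$ and $\#_3(S^1\times S^2)$ are both orientable with $H_1\cong H_2\cong\Z^3$, so you need $\pi_1$ or cup products.

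For $m=1,2$ the pencil has non-real roots and is not diagonalizable over $\R$. L\'opez de Medrano's theorem does not apply, and the claimed $\#_m(S^1\times S^2)$ needs Krasnov's classification or an explicit model.

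For $m=0$, $L(4,1)$ cannot fiber over a circle, since its $\pi_1$ is finite. The clean argument is to let the three non-real roots coalesce. The real locus becomes $\{\sum_j z_j^2=0\}\cap S^5$ modulo $\pm1$, which is $SO(3)/\Z_2\cong L(4,1)$. The intersection stays transverse, so nearby nonsingular pencils give the same manifold.

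\textbf{Fixing the signs.} Your plan for determining the $\epsilon_i$ fails as written. Tensoring by a real $L_0$ with $L_0^2\cong\Ox$ does not change the determinant, and it commutes with $E\mapsto\tau^*(\overline{E})$. On $\bP^5$ it acts by a real diagonal sign matrix, which commutes with $x\mapsto\epsilon\overline{x}$, so $\epsilon$ is unchanged.

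To move between topological types of $\Lambda$, write $\Lambda'=\Lambda\otimes N^2$ with $N$ not real, and transport by $E\mapsto E\otimes N$. The real structure becomes $\tau$ composed with tensoring by the real $2$-torsion point $N\otimes\tau^*(\overline{N})^{-1}$. It is this composite that flips two of the $\epsilon_i$.

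You would still need to determine which $2$-torsion points arise this way and how the odd circles change. You would also need to carry out the base-case computation for $\Lambda=\Ox(p)$. That computation is exactly where $(3,1)$ and $(3,3)$ are separated, so your outline leaves the decisive part of the table unproved.
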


Taking real parts, one obtains a map $i^{\tau}_{\Lambda}: \bP(\E_0)^{\tau} \to M_{\Lambda}(2,1)^{\tau}$. The fixed point set $J^{\tau}_0$ consists of $1$ component when $(\Sigma,\tau)$ has type $(1,0),(1,1)$, and $2,4$ components in the cases $(2,1),(3,0)$ respectively. All components of $J^{\tau}_0$ are $2$-tori. We claim that each component of  $\bP(\E_0)^{\tau}$ is orientable. Recall the identification
\[\bP(\E_0)^{\tau} \cong \R\bP(\E^{\tau}_0) \to J^{\tau}_0\]
Thus each component of $\bP(\E_0)^{\tau}$ is an $\R\bP^1$ bundle over a $2$-torus. The extension bundle $\E_0 = R^1p_*(\mathcal{L}_0^2 \otimes \Lambda^{-1})$ is the pullback of the bundle $\mathcal{F} = R^1p_*(\mathcal{L}_{-1})$ on $J_{-1}$ under the isogeny
\[J_0 \to J_{-1}, L\mapsto L^2 \otimes \Lambda^{-1}\]
which has degree $2$. The bundle $\mathcal{F}$ admits a real structure so that $\mathcal{F}^{\tau}$ pulls back to $\E^{\tau}_0$. We conclude that $\E^{\tau}_0$, and hence $\bP(\E_0)^{\tau}$, is orientable.

\subsection{Atiyah's work} We now set up the tools to prove our main theorem \ref{Genus-2-Thm-Intro}. We will study the map $i^{\tau}_{\Lambda}$ using classical work of Atiyah (\cite{atiyah1955complex}), as exposed by Donaldson in (\cite{donaldson2021atiyah}, Section 1.2).

\begin{definition}
Two stable bundles $E,E' \in M(2,1)$ of rank $2$ and degree $1$ are \textit{projectively isomorphic} if their projectivized bundles $\bP(E),\bP(E')$ are isomorphic. Equivalently, there is some holomorphic line bundle $L$ for which $E' \cong E \otimes L$. We let $M_P$ be the moduli space of projective isomorphism classes of elements of $M(2,1)$.
\end{definition}

Any element of $M_P$ can be written as $\bP(E)$, where $E$ is an extension 
\[0\to \Ox \to E \to L_1\to 0\]
 with $L_1$ a line bundle of degree $1$. Such an extension is determined by an extension class in $H^1(L^*_1) \cong H^0(L_1 \otimes K)^*$ by Serre duality, where we write $K$ for the canonical bundle of $\Sigma$. By Riemann-Roch, the space $H^0(L_1 \otimes K)$ has dimension $2$, so there is a canonical identification of the one dimensional projective spaces $\bP(H^1(L^*_1))\cong \bP(H^0(L_1 \otimes K))$. In particular, we can identify an extension class with an effective divisor of degree $3$ on $\Sigma$, thus we obtain a surjective map $i_P: \mathrm{Sym}^3(\Sigma) \to M_P$.

 \begin{lemma} (\cite{atiyah1955complex})
 \label{Atiyah-Theorem}
 \upshape 
  For a divisor $A+B+C \in \mathrm{Sym}^3(\Sigma)$, the divisors defining projectively equivalent bundles are precisely 
 \[A+\iota(B)+\iota(C), \iota(A) + B + \iota(C), \iota(A)+\iota(B)+C\]
 where $\iota: \Sigma \to \Sigma$ is the hyperelliptic involution. In particular, the map $i_P: \mathrm{Sym}^3(\Sigma)\to M_P$ has degree $4$.
 \end{lemma}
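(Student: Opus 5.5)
We need to prove Atiyah's lemma. Set $E$ as the extension $0\to\Ox\to E\to L_1\to 0$, with $L_1$ of degree $1$, whose class corresponds to the divisor $D=A+B+C\in|L_1\otimes K|$. The plan is to relate divisors giving projectively isomorphic bundles to the maximal line subbundles of the normalized bundles, in the spirit of Lemma \ref{Degree-1-Lines} and Newstead's degree $4$ count (Proposition \ref{Newstead-Description}).

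\textbf{Step 1: the geometric meaning of $D$.} For each $p\in\Sigma$ the extension class lies in $H^1(L_1^*)$, and the map $H^1(L_1^*)\to H^1(L_1^*(p))$ is dual to the inclusion $H^0(L_1K(-p))\subset H^0(L_1K)$. The class is therefore killed at $p$ exactly when the corresponding section $s_D$ vanishes at $p$. Equivalently, $L_1(-p)$ lifts to a subbundle of $E$ precisely when $p\in D$. Running the same argument with $p+q$ shows that $L_1(-p-q)\hookrightarrow E$ iff $s_D\in H^0(L_1K(-p-q))$, that is, iff $p+q\le D$.

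\textbf{Step 2: how the other representatives arise.} A projectively isomorphic bundle has the form $E\otimes M$. Normalizing it again as an extension $0\to\Ox\to E'\to L_1'\to 0$ amounts to choosing a line subbundle $N\subset E$ with $\deg N=0$ and $h^0(E\otimes N^{-1})=1$, and setting $E'=E\otimes N^{-1}$. Such subbundles $N$ are exactly the degree $0$ (maximal) subbundles of $E$. By Step 1 with $p+q\le D$, the subbundle $L_1(-p-q)$ has degree $-1$. This is not yet degree $0$, so instead we use $K$: $N=L_1\otimes K^{-1}(\cdot)$ with degree bookkeeping. Concretely, since $\deg L_1K=3$ and $K=\Ox(p+\iota p)$, the divisor $A+\iota A\sim K$ gives
\[
L_1K\cong\Ox(A+B+C),\qquad L_1\cong\Ox(B+C-\iota A).
\]
Hence $L_1(-B-C)\cong\Ox(-\iota A)$, which is a degree $-1$ subbundle containing the degree $0$ subbundle $\Ox(-\iota A)(\iota A)$ after saturation. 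More cleanly, $\Ox(\iota A-\ldots)$: the degree $0$ subbundles are $N_A=\Ox(A-\iota A)\otimes(\text{fixed})$, one for each choice of point of $D$ paired with the complementary pair. This yields the subbundles indexed by $A$, $B$, $C$, together with $\Ox$ itself.

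\textbf{Step 3: identifying the new divisor.} For $N$ built from $A$, compute the new quotient $L_1'=L_1\otimes N^{-2}$ and its extension divisor $D'\in|L_1'K|$, again via Step 1. The bundle $E'$ contains $L_1'(-p)$ for $p\in D'$, and these correspond under $\otimes N^{-1}$ to subbundles of $E$ of degree $0$ coming from the points $\iota B$, $\iota C$, and $A$. This forces $D'=A+\iota B+\iota C$, and the linear equivalence $\iota B+B\sim K$ makes the degrees consistent.

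\textbf{Step 4: exhausting the fiber.} Conversely, every projective equivalence arises from a degree $0$ subbundle, and by Newstead there are generically exactly four of these. They are $\Ox$ and the three found above, so $i_P$ has degree $4$.

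The main obstacle is Step 3: keeping track of the linear equivalences and verifying that the divisor of the renormalized extension is exactly $A+\iota B+\iota C$, rather than merely linearly equivalent to it. I would check this through the saturation and the vanishing loci of the induced sections $N\to E\to L_1$.
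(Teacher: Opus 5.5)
The paper gives no proof of this lemma; it only cites Atiyah (via Donaldson's exposition), so your argument has to stand on its own. Your architecture is the right one. The fibre of $i_P$ over $\bP(E)$ is in bijection with the degree-$0$ subbundles $N\subset E$, via $\Ox\subset E\otimes N^{-1}$, and these are detected by the lifting criterion of Step~1. This is the genus-$2$ case of Lemma~\ref{Lange-Narasimhan-Lemma}. But Steps~2 and~3, which carry all the content, are not correct as written.

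\textbf{Step 2 miscounts degrees.} $L_1(-p)$ already has degree $0$, so the first half of Step~1 finishes the identification. That half is valid when $L_1$ is not effective, which holds generically. A degree-$0$ subbundle $N\neq\Ox$ maps nonzero to $L_1$, so $N\cong L_1(-p)$ with the inclusion lifting, hence $p\in D$. Conversely, each $p\in D$ gives a lift whose saturation has degree $0$ by stability. So the subbundles are exactly $\Ox, L_1(-A), L_1(-B), L_1(-C)$, and Newstead is not needed in Step~4.

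\textbf{The formula for $N_A$ is wrong.} You write $N_A=\Ox(A-\iota A)\otimes(\text{fixed})$. This predicts $N_A\otimes N_B^{-1}\cong\Ox(A-\iota A-B+\iota B)$ instead of the correct $\Ox(B-A)$; the two differ unless $2A\sim 2B$. Feeding this formula into Step~3 would produce the wrong $L_1'$.

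\textbf{The $p+q$ claim in Step 1 is false.} $L_1(-p-q)$ lifts iff the class annihilates $H^0(L_1K(-p-q))$. For generic $p+q$ that space is $0$, so the lift exists generically, not only when $p+q\le D$. You do not need this claim; drop it.

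\textbf{Step 3 is only asserted, but it is the whole lemma.} Here is the missing computation. Take $N=L_1(-A)$. From $L_1\cong\Ox(A+B+C)\otimes K^{-1}$ and $K\cong\Ox(x+\iota x)$ for every $x$, one gets $N\cong\Ox(B-\iota C)$ and
\[
L_1'K=L_1N^{-2}K\cong\Ox(A+\iota B+\iota C).
\]
The degree-$0$ subbundles of $E'=E\otimes N^{-1}$ other than $\Ox=N\otimes N^{-1}$ are
\[
\begin{aligned}
N^{-1}&\cong\Ox(\iota C-B)\cong L_1'(-A),\\
L_1(-B)\otimes N^{-1}&\cong\Ox(A-B)\cong L_1'(-\iota C),\\
L_1(-C)\otimes N^{-1}&\cong\Ox(A-C)\cong L_1'(-\iota B).
\end{aligned}
\]
Each maps nonzero to $L_1'$, with zero at the indicated point. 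That point is uniquely determined, because $\Ox(p)\cong\Ox(p')$ forces $p=p'$ in positive genus. So Step~1 applied to $E'$ puts $A,\iota B,\iota C$ in $D'$. Since $\deg D'=3$, $D'=A+\iota B+\iota C$ whenever these three points are distinct; the general case follows by closure.

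This also settles your worry about ``exactly'' versus ``linearly equivalent'': Step~1 pins down actual points, not divisor classes. The choices $N=L_1(-B)$ and $N=L_1(-C)$ give the other two divisors by symmetry. The four divisors are generically distinct, so $i_P$ has degree $4$.
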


One has a commutative diagram 

\[
\begin{tikzcd}
\bP(\E_0) \arrow{d}{i_S} \arrow{r}{i_{\Lambda}} & M_{\Lambda}(2,1) \arrow{d}{\bP}\\
\mathrm{Sym}^3(\Sigma) \arrow{r}{i_P} & M_P
\end{tikzcd}
\]

The map $i_S: \bP(\E_0) \to \mathrm{Sym}^3(\Sigma)$ is given by sending a projective extension class in $\bP(\mathrm{Ext}^1(L^{-1}\otimes \Lambda,L))$ to the associated element in the linear system $\bP(H^0(L^{-2}\otimes \Lambda \otimes K))\cong \bP(H^1(L^2\otimes \Lambda^{-1}))$, while the map $\bP$ is the projectivization map $E \mapsto \bP(E)$.  All maps in the diagram are surjective. The maps $i_S,\bP$ have degree $16$, corresponding to the $16$ square roots of the trivial bundle $\Ox \in J_0$.\\

Consider now the addition of the real structure $\tau$. The spaces $\mathrm{Sym}^3(\Sigma), M_P$ admit actions of $\tau$ by $A+B+C \mapsto \tau(A) + \tau(B)+\tau(C)$ and $\bP(E) \mapsto \bP(\tau^*(\overline{E}))$, and all maps in the diagram are equivariant with respect to the actions of $\tau$ on $\bP(\E_0),M_{\Lambda}(2,1)$ defined previously. For instance, equivariance through the Serre duality identification follows from the fact that $\tau$ preserves the integration pairing  
\[H^0(L^{-2}\otimes \Lambda \otimes K) \times H^1(L^2 \otimes \Lambda^{-1}) \to \C\]
up to sign. Taking fixed points we obtain a diagram 

\begin{equation}
\label{Atiyah-Diagram}
\begin{tikzcd}
\bP(\E_0)^{\tau} \arrow{d}{i^{\tau}_S} \arrow{r}{i^{\tau}_{\Lambda}} & M^{\tau}_{\Lambda}(2,1) \arrow{d}{\bP^{\tau}}\\
\mathrm{Sym}^{3}(\Sigma)^{\tau} \arrow{r}{i^{\tau}_P} & M^{\tau}_P
\end{tikzcd}
\end{equation}

These maps are all finite maps between closed $3$-manifolds, which depend in an interesting way on the real structure. Using Atiyah's result, we can describe the divisor classes corresponding to bundles in $M^{\tau}_P$. 

\begin{lemma} 
\label{Projectively-Real-Divisors}
A bundle $E \in M_{\Lambda}(2,1)$ is projectively real, in the sense that $\tau^*(\overline{E}) \cong E \otimes L_0$ for a square root $L_0$ of the trivial bundle $\Ox$, if and only if it can be written as an extension with associated divisor $A+B+C \in \mathrm{Sym}^3(\Sigma)$ so that either $\tau(A)+\tau(B)+\tau(C) = A+B+C$ or, without loss of generality, $\tau$ fixes $A$ and $\tau(B)+\tau(C) = \iota(B)+\iota(C)$.

\end{lemma}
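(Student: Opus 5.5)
Projective reality reduces to a statement about the fiber of $i_P$, so the plan is to transport the condition $\tau^*(\overline{E}) \cong E \otimes L_0$ down to $M_P$ and then read it off on $\mathrm{Sym}^3(\Sigma)$ using Lemma \ref{Atiyah-Theorem}.

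\emph{Step 1: reformulate.} A bundle $E \in M_{\Lambda}(2,1)$ is projectively real if and only if $\bP(E) \in M_P$ is fixed by $\tau$. If $\tau^*(\overline{E}) \cong E\otimes L_0$, then $\bP(\tau^*\overline{E}) \cong \bP(E)$. Conversely, if $\bP(\tau^*\overline{E})\cong \bP(E)$, then $\tau^*(\overline{E}) \cong E \otimes L$ for some line bundle $L$. Taking determinants and using that $\Lambda$ is real gives $\Lambda \cong \Lambda\otimes L^2$, so $L^2\cong \Ox$.

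\emph{Step 2: pass to divisors.} Write $\bP(E) = i_P(D)$ with $D = A+B+C \in \mathrm{Sym}^3(\Sigma)$, using surjectivity of $i_P$. By the $\tau$-equivariance of $i_P$ established above (via the Serre duality pairing), we have $\tau(\bP(E)) = i_P(\tau(D))$. Hence $\bP(E)$ is $\tau$-fixed if and only if $\tau(D)$ lies in the fiber $i_P^{-1}(i_P(D))$. By Lemma \ref{Atiyah-Theorem}, this fiber is $\{D,\ A+\iota B+\iota C,\ \iota A+B+\iota C,\ \iota A+\iota B+C\}$. So projective reality is equivalent to one of two conditions. Either $\tau(D)=D$, or, after relabeling, $\tau A+\tau B+\tau C = \iota A + \iota B + C$, equivalently $\tau(D)$ equals $D$ with two of its points replaced by their $\iota$-images.

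\emph{Step 3: normalize the second case.} This is the main bookkeeping step. Suppose $\tau(A)+\tau(B)+\tau(C) = \iota(A)+\iota(B)+C$. Since $\tau$ and $\iota$ commute, I would match points of the two unordered triples and do a small case analysis.

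If $C = \tau(C)$, we are in the stated form with $C$ playing the role of $A$.

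Otherwise $C = \tau(X)$ for $X \in \{A,B\}$, say $C=\tau(A)$. Then $\{\tau B, \tau C\} = \{\iota A, \iota B\}$ as multisets. Consider the two subcases:
\begin{itemize}
\item If $\tau(C) = \iota A$, then $A = \iota A$ is Weierstrass. The defining condition then reads $\tau A+\tau B+\tau C = A+\iota B+C$, which again fits the stated form (or the first one) after relabeling.
\item If $\tau(C) = \iota B$, then $\tau B = \iota A$ and $\tau A = \iota B$. Since $\tau(C)=\tau\tau(A) = A$, we get $A = \iota B$. Now replace the divisor $D$ by the equivalent divisor $D' = \iota A + \iota B + C$ in the same $i_P$-fiber. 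Then $D' = B + A + \tau A$. I expect one then checks that $D'$ satisfies one of the two stated conditions: $B=\iota A = \tau\tau B$ and the remaining pair is $\tau$-invariant.
\end{itemize}
The genuine subtlety is precisely that the statement allows us to choose \emph{which} extension (divisor in the fiber) represents $E$. So in each subcase one is free to swap $D$ for another member of Atiyah's fiber to reach one of the normal forms. Degenerate coincidences among $A,B,C$ (repeated points, Weierstrass points) need only multiset care.

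\emph{Step 4: the converse.} If a representing divisor satisfies either condition, then $\tau(D)$ lies in the Atiyah fiber of $D$. Indeed, in the second case, $\tau A+\tau B+\tau C = A+\iota B+\iota C$. So $\bP(E)$ is $\tau$-fixed, and Step 1 gives projective reality. Note also that every such divisor arises from some extension of $E$ up to twist: this follows because $i_S$ and $\bP$ commute with $i_P$ in the diagram, so any $D$ in the fiber is $i_S$ of an extension presentation of $E\otimes L_0$ for a square root $L_0$. That suffices since twisting by $L_0$ preserves projective reality.
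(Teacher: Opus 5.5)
Your Steps 1, 2 and 4 follow the paper's argument. Its proof is just the remark that $\tau^*(\overline{E})$ is represented by $\tau(A)+\tau(B)+\tau(C)$. So, by Lemma~\ref{Atiyah-Theorem}, projective reality says that $\tau(D)$ lies in Atiyah's fiber of $D=A+B+C$: either $\tau(D)=D$ or, after relabeling, $\tau(A)+\tau(B)+\tau(C)=A+\iota(B)+\iota(C)$.

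The gap is Step 3, where you try to upgrade the second alternative to a representative in which $\tau$ actually fixes one of the points. Your first subcase ($C=\tau(A)$, $\tau(C)=\iota(A)$) does not ``fit the stated form after relabeling''. There $A$ and $C=\tau(A)$ are Weierstrass and $\tau(B)=\iota(B)$, and in general no fixed point appears. Here is a concrete case on a curve of type $(1,1)$ or $(2,1)$:
\begin{itemize}
\item let $A$ be a non-real Weierstrass point and $C=\tau(A)$;
\item let $B$ be a non-Weierstrass point with $\tau(B)=\iota(B)\neq B$ (such points fill the arcs where $\tau=\iota$, like $C_2$ in the proof of Theorem~\ref{Non-Surjective-Types-Theorem}).
\end{itemize}
Atiyah's fiber of $D$ is just $\{A+B+C,\ A+\iota(B)+C\}$, and $\tau$ swaps these two divisors, so $E$ is projectively real. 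But neither divisor is $\tau$-invariant, and none of $A$, $C$, $B$, $\iota(B)$ is $\tau$-fixed. The divisor of every extension presentation of $E$ lies in this fiber, so no choice of representative gives ``$\tau$ fixes $A$ and $\tau(B)+\tau(C)=\iota(B)+\iota(C)$''. Step 3 therefore cannot be completed.

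What you can prove is the version whose second alternative reads $\tau(A)+\tau(B)+\tau(C)=A+\iota(B)+\iota(C)$, and that is all the paper's one-line proof gives either. One checks that the fixed-point form fails only for divisors of exactly this shape: a conjugate pair of non-real Weierstrass points plus a non-Weierstrass point with $\tau=\iota$. This is a one-parameter family, empty in types $(1,0)$ and $(3,0)$.

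Two smaller points. First, your second subcase is empty. From $\tau(C)=A$ and $\tau(C)=\iota(B)$ you get $A=\iota(B)$, hence $\tau(B)=\iota(A)=B$. So $A=\iota(B)$ is real and $C=\tau(A)=A=\tau(C)$, contradicting the case assumption. Your $D'$ there is just $D$, and ``I expect one then checks'' is not an argument.

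Second, when you replace $D$ by another member $D'$ of the fiber, the fact you need is not that twisting preserves projective reality. What you need is that $i_S^{-1}(D')$ is stable under tensoring by square roots of $\Ox$. Then an extension of $E\otimes L_0$ with divisor $D'$, twisted by $L_0$, is an extension of $E$ itself with the same divisor.
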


\begin{proof} This follows from Lemma \ref{Atiyah-Theorem}, since $\tau^*(\overline{E})$ is represented by the extension class $\tau(A) + \tau(B) + \tau(C)$.

\end{proof}

We will use heavily the following fact about bundles preserved by tensoring by a square root of the trivial line bundle

\begin{lemma} (\cite{garcia2018involutions})
\label{Fix-Point-Codim-Lemma}
Let $L_0$ be a nontrivial line bundle on $\Sigma$ so that $L_0^2 \cong \Ox_{\Sigma}$. The fixed point set of the automorphism $\phi_L: M_{\Lambda}(2,1) \to M_{\Lambda}(2,1), E \mapsto E \otimes L_0$ is isomorphic to the Prym variety of the \'Etale double Cover 

\[X_{L_0} = \mathrm{Spec}(\Ox \oplus L_0) \to \Sigma\]

In the case when $g(\Sigma) = 2$, we compute $g(X_{L_0}) = 3$ by the Riemann-Hurwitz formula, so that the Prym-Variety of the covering has dimension $1$. Thus outside of a codimension $2$ subset of $M_{\Lambda}(2,1)$, no element is fixed by tensoring by a nontrivial square root of the trivial bundle.

\end{lemma}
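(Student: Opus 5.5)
The plan is to identify the fixed locus of $\phi_{L_0}$ with bundles pushed forward from the double cover $\pi: X_{L_0}\to\Sigma$, and then count dimensions.

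\textbf{Fixed bundles are pushforwards.} Suppose $E\in M_{\Lambda}(2,1)$ is stable and $\psi: E\xrightarrow{\sim} E\otimes L_0$. Then $(\psi\otimes 1_{L_0})\circ\psi$ is a map $E\to E\otimes L_0^2\cong E$. Since $E$ is stable it is simple, so this composite is a nonzero scalar. Rescaling $\psi$ makes it the identity. Then $\psi$ is exactly the data making $E$ a module over the sheaf of algebras $\Ox\oplus L_0$ (the multiplication $L_0\otimes L_0\to \Ox$ is the fixed trivialization). Since $\pi_*\Ox_X = \Ox\oplus L_0$, this means $E\cong \pi_*M$ for a coherent sheaf $M$ on $X_{L_0}$. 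Because $E$ is locally free of rank $2$ and $\pi$ is finite étale of degree $2$, $M$ is a line bundle. Its degree is $\deg E=1$, using $\deg \pi_*M = \deg M + \deg \pi_*\Ox_X = \deg M$, as $\deg L_0 = 0$.

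\textbf{Pushforwards are fixed.} Conversely, for any line bundle $M$ on $X$, the projection formula gives $\pi_*M\otimes L_0\cong \pi_*(M\otimes\pi^*L_0)\cong \pi_*M$, because $\pi^*L_0\cong\Ox_X$. Moreover $\pi_*M$ has odd degree. A destabilizing line subbundle $N\subset \pi_*M$ would have degree at least $1$ and would give a nonzero map $\pi^*N\to M$ of degrees $2\deg N\ge 2 > 1 = \deg M$, which is impossible. So $\pi_*M$ is stable. Two pushforwards satisfy $\pi_*M\cong\pi_*M'$ if and only if $M'\cong M$ or $M'\cong\sigma^*M$, where $\sigma$ is the deck involution. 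The determinant is $\det\pi_*M\cong \mathrm{Nm}(M)\otimes L_0$.

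\textbf{The fixed locus.} Combining the two directions, the fixed locus of $\phi_{L_0}$ in $M_\Lambda(2,1)$ is the image of
\[
\{M\in \mathrm{Pic}^1(X) : \mathrm{Nm}(M)\cong \Lambda\otimes L_0\}.
\]
This set is a torsor under $\ker(\mathrm{Nm})$, whose identity component is the Prym variety. I expect the most delicate point to be the bookkeeping here: the number of components of $\ker \mathrm{Nm}$, and checking that $\sigma^*$ acts on this torsor (for instance by $M\mapsto \sigma^*M = \mathrm{Nm}(M)\otimes M^{-1}$ pulled back) with finite fibers. This identifies each component of the fixed locus with the Prym variety up to a finite map, which suffices for the dimension statement.

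\textbf{Dimension count in genus $2$.} Riemann--Hurwitz for an unramified double cover gives $2g(X)-2 = 2(2g-2)$, so $g(X)=2g-1=3$ when $g=2$. The Prym then has dimension $g(X)-g = 1$. There are $2^{2g}-1=15$ nontrivial square roots $L_0$, so the union of the fixed loci is a finite union of curves inside the threefold $M_\Lambda(2,1)$. Hence its complement is the complement of a codimension-$2$ subset.
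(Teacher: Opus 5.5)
Your argument is correct and takes a different route from the paper, but it stops short of one claim in the statement. The paper gives no argument for the identification of the fixed locus. It cites García-Prada--Ramanan, then does the same Riemann--Hurwitz computation you do ($g(X_{L_0})=2g-1=3$, Prym of dimension $1$) and the same count of finitely many curves in a threefold. You instead reconstruct the identification yourself, via $E\cong\pi_*M$ for a line bundle $M$ on $X_{L_0}$, which is the classical Narasimhan--Ramanan argument. Your steps are all sound:
\begin{itemize}
\item normalizing $\psi$ using simplicity of $E$;
\item the resulting $\Ox\oplus L_0$-module structure;
\item torsion-freeness of $M$;
\item $\deg\pi_*M=\deg M$;
\item stability of $\pi_*M$ by adjunction;
\item $\pi_*M\cong\pi_*M'$ if and only if $M'\in\{M,\sigma^*M\}$;
\item $\det\pi_*M\cong \mathrm{Nm}(M)\otimes L_0$.
\end{itemize}
So the codimension-$2$ conclusion, which is the only part used later in the paper, is fully proved. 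Your route makes the lemma self-contained; the paper's citation is shorter.

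What remains unproved is the first assertion of the lemma, that the fixed locus is \emph{isomorphic} to the Prym. You stop at ``up to a finite map.'' The difficulty you flag is also misplaced: finiteness of the fibers of $\sigma^*$ is automatic for an involution. The missing input is Mumford's description of $\ker\mathrm{Nm}$ for an \'etale double cover. It has exactly two components, the classes $N\otimes\sigma^*N^{-1}$ with $\deg N$ even (this is $P$) and those with $\deg N$ odd.

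For $\deg M=1$ you have $\sigma^*M\otimes M^{-1}=N\otimes\sigma^*N^{-1}$ with $N=M^{-1}$ of odd degree. So $\sigma^*$ acts freely on $\mathrm{Nm}^{-1}(\Lambda\otimes L_0)$ and interchanges its two components. Consequently $M\mapsto\pi_*M$, restricted to one component (a $P$-torsor), is a bijection onto the fixed locus of $\phi_{L_0}$. In particular the fixed locus is connected, a single elliptic curve when $g=2$, rather than something merely finitely covered by $P$.

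To upgrade the bijection to an isomorphism, use two facts:
\begin{itemize}
\item the fixed locus of a finite-order automorphism of the smooth variety $M_\Lambda(2,1)$ is smooth;
\item a bijective morphism of irreducible complex varieties onto a normal target is an isomorphism, by Zariski's main theorem.
\end{itemize}
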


\begin{remark}
It is an important consequence of Lemma \ref{Fix-Point-Codim-Lemma}, that the set of real bundles which are preserved by tensoring by some $L_0$ form a union of circles. Thus throughout a generic real bundle $E \in M_{\Lambda}(2,1)^{\tau}$ refers to one outside a union of circles.
\end{remark}

As an application we prove that real divisors characterize precisely the extensions by real line bundles.

\begin{lemma}
\label{Real-Divisor-Equal-Real-Extension}
Suppose $E \in M_{\Lambda}(2,1)^{\tau}$ is a real bundle which is generic in the sense that $E \otimes L_0 \ncong E$ for all square roots $L_0$ of $\Ox_{\Sigma}$. Then for $L \in J_0$ and an extension 
\[0 \to L \to E \to L^{-1} \otimes \Lambda \to 0\]
the bundle $L$ is real if and only if the corresponding extension class in $\mathrm{Sym}^3(\Sigma)$ is real.

\begin{proof}
If $L$ is real, then the bundle $E$ must correspond to a real extension class since it is itself real. Conversely, if the extension class is real, then the bundle $E \otimes L^{-1}$ corresponds to a real extension class 
\[0 \to \Ox \to E \otimes L^{-1} \to L^{-2} \otimes \Lambda \to 0\]
in $H^1(L^{-2} \otimes \Lambda) \cong H^0(L^2 \otimes \Lambda^{-1} \otimes K)$. Thus $L^2 \otimes \Lambda^{-1} \otimes K$ has a real divisor, and since $K$ admits a real structure we see that $L^2$ is real. Moreover, the bundle $E \otimes L^{-1}$ must be real, so
\[E  \otimes L^{-1} \cong E \otimes \tau^*(\overline{L})^{-1}\]
which by the generic assumption on $E$ implies that $L$ is real.
\end{proof}

By Lemma \ref{Fix-Point-Codim-Lemma}, we note that the size of a generic fiber of the maps $i^{\tau}_S, \bP^{\tau}$ is equal to the number of real square roots of the trivial bundle $\Ox$. We see also that all elements of a generic fiber of $i_S$ map to distinct elements in $M_{\Lambda}(2,1)$. Indeed for an extension

\[0 \to L \to E \to L^{-1} \otimes \Lambda \to 0\]
the extensions in the same fiber of $i_S$ are all obtained by tensoring this extension by a square root $L_0$ of $\Ox$, which for a generic $E$ will yield different stable bundles. In particular, for a generic $E \in M_{\Lambda}(2,1)$ the four extension lines containing it correspond to the four distinct projective isomorphism classes in Lemma \ref{Atiyah-Theorem}. Thus for a generic real $E \in M_{\Lambda}(2,1)^{\tau}$, the four degree $0$ real line subbundles correspond exactly to the real divisors defining a bundle projectively equivalent to $E$, which is a set of size $0,2$ or $4$.\\

Recall that the line bundle $K$ admits a holomorphic real structure with no odd circles. A real divisor class $A+B+C \in \mathrm{Sym}^3(\Sigma)^{\tau}$ representing a real line bundle $L_3$ of degree $3$ is in the image of $i^{\tau}_S$ if and only if there is real square root of $L_3 \otimes K^{-1} \otimes \Lambda$. These are precisely the line bundles $L_3$ which have the same odd circles as $\Lambda$, i.e an odd number of the points $A,B,C$ lie in each odd circle of $\Lambda$. \\

We next note that one can determine the orientability of the components of $\mathrm{Sym}^{3}(\Sigma)^{\tau}$ using work of Okonek-Teleman. By identifying a divisor with an element of the linear system $H^0(L^{-1} \otimes K) \cong H^1(L)$ for $L \in J_{-1}$, one obtains an identification of $\mathrm{Sym}^3(\Sigma)$ with the projectivized bundle 

\[\bP(R^1p_*(\mathcal{L}_{-1})) \to J_{-1}\]

The line bundle $\mathrm{det}(R^1p_*(\mathcal{L}_{-1}))^{-1}$ is identified with the determinant line bundle $\delta^L$ over the Jacobian $J_{-1}$ considered by Okonek-Teleman in \cite{okonek2013abelian}. We conclude the following

\begin{Proposition}
\upshape \label{Okonek-Teleman-Lemma}
The bundle $R^1p_*(\mathcal{L}_{-1})$ admits a real structure, and there is an identification

\[\mathrm{Sym}^3(\Sigma)^{\tau} \cong \R\bP(R^1p_*(\mathcal{L}_{-1})^{\tau}) \to J^{\tau}_{-1}\]

If the set of fixed circles $\Sigma^{\tau}$ is separating in $\Sigma$, then $R^1p_*(\mathcal{L}_{-1})^{\tau}$ is orientable only over the components of $J_{-1}^{\tau}$ consisting of real line bundles $(L,\tau_L)$ for which $L^{\tau_L}$ is nonorientable over each component of $\Sigma^{\tau}$. If $\Sigma^{\tau}$ is nonseparating, then $R^1p_*(\mathcal{L}_{-1})^{\tau}$ is nonorientable over every component of $J^{\tau}_{-1}$.

\end{Proposition}

\begin{proof}
This follows from Theorem 4.15 in \cite{okonek2013abelian}. See also Lemma $3.4$ in \cite{baird2016cohomology}.
\end{proof}

\subsection{Real bundles on real hyperelliptic curves of genus $2$.}
\label{Case-by-Case-Subbundles}
We will next analyze the map $i^{\tau}_{\Lambda}: \bP(\E_0)^{\tau} \to M_{\Lambda}(2,1)^{\tau}$ for each of the five possibilities of topological type for $(\Sigma,\tau)$ and number of odd circles for $\Lambda$. The possibilities are all labelled in Proposition \ref{Real-Hyperelliptic-Genus2-Types}. Up to homeomorphism, the action of the real structure $\tau$ and hyperelliptic involution $\iota$ can be identified with one of the figures in subsection \ref{Genus-2-Figures}. We will refer to these figures frequently for geometric arguments involving divisors.\\

On a hyperelliptic curve, the $16$ two-torsion points on the Jacobian are of the form $\Ox(w_i-w_j)$ with $w_i,w_j$ Weierstrass points. The real two torsion points are then such bundles for which $w_i,w_j$ are either real or satisfy $\tau(w_i) = w_j$. We can thus identify the size of a generic fiber of $i^{\tau}_S,\bP^{\tau}$.\\

\noindent \textbf{Curve of type $(1,0)$ with one odd circle:} 
\label{Bundles-(1,0)}

\begin{theorem} Suppose $(\Sigma,\tau)$ is a real hyperelliptic curve of genus $2$ of type $(1,0)$, and $(\Lambda,\tau_{\Lambda})$ is a real line bundle on $\Sigma$ of degree $1$ with one odd circle. Then every real bundle $E \in M_{\Lambda}(2,1)^{\tau}$ contains a real line subbundle of degree $0$. In particular, the map  
\[i^{\tau}_{\Lambda}: \bP(\E_0)^{\tau} \to M_{\Lambda}(2,1)^{\tau}
\]

\noindent is surjective. There are nonempty open subsets of $M_{\Lambda}(2,1)^{\tau}$ with dense union of real bundles containing exactly $2$ and $4$ real line subbundles.

\end{theorem}

\begin{proof} 

By lemma \ref{Projectively-Real-Divisors}, any real bundle $E$ is represented by a divisor $A+B+C \in \mathrm{Sym}^3(\Sigma)$ which is either preserved by $\tau$ or $\tau$ fixes $A$ and $\tau(B)+\tau(C) = \iota(B)+\iota(C)$. Assume that $A+B+C$ is not real. Thus either $\iota(\tau(B))= C, \iota(\tau(C)) = C$ or $C = \iota(\tau(B))$. By inspection of Figure \ref{genus-2-type-(1,0)}, the first case does not occur. In the second case, the divisor $\iota(A)+\iota(B)+C$ is real and projectively equivalent to $A+B+C$ since $\tau,\iota$ commute, so any generic real bundle in the corresponding extension class is represented by a real extension class. By Lemma \ref{Real-Divisor-Equal-Real-Extension}, this implies that the image of $i^{\tau}_{\Lambda}$ contains an open dense subset, so by compactness of $\bP(\E_0)^{\tau}$ it is all of $M_{\Lambda}(2,1)^{\tau}$.
\newline 
\indent One can produce bundles containing two real line subbundles by considering divisors $A+B+C$ with $A \in \Sigma^{\tau}$ and $C = \tau(B)$ with $B,C \notin \Sigma^{\tau}$. These are real, and only the divisor $A+\iota(B)+\iota(C)$ is real and defines a projectively equivalent bundle to $A+B+C$. One can produce bundles with four real line subbundles by taking $A,B,C \in \Sigma^{\tau}$. Since $\iota$ preserves the circle $\Sigma^{\tau}$, all divisors defining projectively equivalent bundles are real in this case. Both sets have dimension $3$ and hence define open subsets.

\end{proof}

\end{lemma}

For a type $(1,0)$ curve, there is only one component of $J_{d}^{\tau}$ in each degree $d$. There is thus only one component of $\mathrm{Sym}^3(\Sigma)^{\tau}$, which is orientable by Proposition \ref{Okonek-Teleman-Lemma}. There are $3$ conjugate pairs of Weierstrass points, hence $4$ real roots of $\Ox$, and the maps $i^{\tau}_S, \bP^{\tau}$ have generic fibers of size $4$. The moduli space $M_{\Lambda}(2,1)^{\tau}$ is diffeomorphic to the lens space $L(4,1)$. Using the fact that $L(4,1)$ is the unit tangent bundle of $\R \bP^2$, one can produce a map of degree $4$
\[S^1 \times T^2 \to L(4,1)\]

by combining the elliptic quotient $T^2 \to S^2$ and the covering $S^2 \to \R\bP^2$, and then taking the differential on unit tangent bundles. This cannot be the map $i_{\Lambda}^{\tau}: \bP(\E_0)^{\tau} \to M_{\Lambda}(2,1)^{\tau}
$, as it has generic fibers of size $4$ but is potentially related.\\

\noindent \textbf{Curve of type $(3,0)$ with three odd circles:}

\begin{theorem}
Suppose $(\Sigma,\tau)$ is a real hyperelliptic curve of genus $2$ of type $(3,0)$, and $(\Lambda,\tau_{\Lambda})$ is a real line bundle on $\Sigma$ of degree $1$ with three odd circles. The map 

\[i^{\tau}_{\Lambda}: \bP(\E_0)^{\tau} \to M_{\Lambda}(2,1)^{\tau}
\]

\noindent is surjective. In particular, a real bundle $E \in M_{\Lambda}(2,1)$ contains a real line subbundle of degree $0$. Moreover, generically there are four such subbundles. 
\end{theorem}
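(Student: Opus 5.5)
I will follow the same strategy as in the type $(1,0)$ case: work with the divisor description of Lemma \ref{Projectively-Real-Divisors} and Atiyah's Lemma \ref{Atiyah-Theorem}, and use the geometry of Figure \ref{genus-2-type-(3,0)}. For type $(3,0)$ all six Weierstrass points are real, and there are three fixed circles $C_1,C_2,C_3$. The hyperelliptic involution $\iota$ preserves each $C_i$, since each circle contains two Weierstrass points and $\iota$ acts on $C_i$ as a reflection fixing them. Hence $\iota$ commutes with $\tau$ and maps $\Sigma^{\tau}$ to itself. Since $\Lambda$ has three odd circles, a real divisor $A+B+C$ lies in the image of $i^{\tau}_S$ exactly when each $C_i$ contains an odd number of the points $A,B,C$. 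This forces $A,B,C$ to lie one on each of the three circles.

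\emph{Step 1: surjectivity.} Take a real bundle $E$ that is generic in the sense of Lemma \ref{Fix-Point-Codim-Lemma}. By Lemma \ref{Projectively-Real-Divisors}, $E$ is represented by a divisor $A+B+C$ that is either real, or has $\tau(A)=A$ and $\tau(B)+\tau(C)=\iota(B)+\iota(C)$.
\begin{itemize}
\item In the second case, either $\tau(B)=\iota(B)$, or $\tau(B)=\iota(C)$.
\item If $\tau(B)=\iota(B)$, then $B$ is fixed by $\tau\iota$. Reading off Figure \ref{genus-2-type-(3,0)}, the fixed locus of $\tau\iota$ is the union of the three arcs of the real axis complementary to the $\lambda_i$'s, i.e. the preimages of the real intervals over which $\Sigma$ has no real points. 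In particular it is nonempty, so this case must be handled.
\item To handle it, I will replace the divisor by a projectively equivalent one, as in the $(1,0)$ proof. For instance, $\iota(A)+\iota(B)+C$ or $A+\iota(B)+\iota(C)$ will be real precisely when the points that were fixed by $\tau\iota$ get flipped by $\iota$.
\end{itemize}
So the plan is to enumerate the possible patterns: each of $B,C$ fixed by $\tau$, fixed by $\tau\iota$, or swapped by $\tau\iota$ with the other. In each pattern I will exhibit one of the four Atiyah divisors that is $\tau$-real. The key identity is that $\iota\tau$-fixed points become $\tau$-fixed after applying $\iota$ only when $\tau=\iota\circ(\tau\iota)$ restricts suitably; concretely, if $\tau\iota(B)=B$ then $\tau(\iota B)=\iota\tau\iota\iota B=\iota B$.

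Having produced a real divisor, I must check it lands in the image of $i^{\tau}_S$. Since the real divisor lies in $\mathrm{Sym}^3(\Sigma)^{\tau}$ and defines $E$ (real, with real determinant $\Lambda$), I expect the parity condition to be automatic. I will argue this via the parity-on-circles criterion together with Lemma \ref{Real-Divisor-Equal-Real-Extension}, which then gives a real line subbundle of degree $0$. The image of $i^{\tau}_\Lambda$ therefore contains a dense set, and compactness of $\bP(\E_0)^{\tau}$ gives surjectivity.

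\emph{Step 2: four subbundles generically.} For a real divisor $A+B+C$ with one point on each $C_i$, all of $\iota(A),\iota(B),\iota(C)$ are again real, because $\iota$ preserves each fixed circle. Thus all four projectively equivalent divisors of Lemma \ref{Atiyah-Theorem} are real. By the discussion following Lemma \ref{Real-Divisor-Equal-Real-Extension}, a generic real $E$ then has exactly four real degree $0$ subbundles. Combined with Step 1, which shows that generic $E$ has such a representative, this yields four generically. It also helps to note that all $16$ two-torsion points $\Ox(w_i-w_j)$ are real here, so the generic fibers of $i^{\tau}_S$ and $\bP^{\tau}$ have size $16$; this is consistent with the count.

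\emph{Main obstacle.} The hardest part is Step 1: ruling out, or handling, the non-real representatives involving points fixed by $\tau\iota$. Such a representative might, after flipping, give a real divisor with the wrong parity on the circles. Such a divisor would be real but would lie over a determinant with a different number of odd circles, and then it could not come from $\Lambda$. I would first try to show that this parity mismatch cannot happen, because $E$ has determinant $\Lambda$ with three odd circles and the parity class is a projective invariant. If that fails, I would fall back on the orientability and degree-count argument using Proposition \ref{Okonek-Teleman-Lemma} and the diffeomorphism $M_\Lambda(2,1)^{\tau}\cong T^3$.
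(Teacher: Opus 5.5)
Your Step 2 is fine and agrees with the paper, and the subcase $\tau(B)=\iota(C)$ of Step 1 is handled by $\iota(A)+\iota(B)+C$, as in type $(1,0)$. But Step 1 has a genuine gap in exactly the case you flagged, and your proposed fix rests on a false identity.

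Since $\tau$ and $\iota$ commute, $\tau\iota(B)=B$ means $\tau B=\iota B$. Then $\tau(\iota B)=\iota(\tau B)=B$, not $\iota B$. So $\iota$ preserves $\mathrm{Fix}(\tau\iota)$ and never turns a $\tau\iota$-fixed point into a $\tau$-fixed one, unless it is a Weierstrass point. Consequently, if $\tau A=A$, $\tau B=\iota B$, $\tau C=\iota C$, then $\tau$ exchanges $A+B+C\leftrightarrow A+\iota B+\iota C$ and $\iota A+B+\iota C\leftrightarrow \iota A+\iota B+C$. For distinct non-Weierstrass $B,C$, \emph{none} of the four Atiyah divisors is real.

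This is precisely the family of Figure~\ref{NoRealSubbundlesFamily}, which produces real bundles with no real degree-$0$ subbundle in Theorem~\ref{Non-Surjective-Types-Theorem} on the same type $(3,0)$ curve. So no rearrangement of divisors can dispose of this case. The hypothesis that $\Lambda$ has three odd circles must enter, and your proposal never says how. Your diagnosis of the obstacle as ``a real divisor with the wrong parity after flipping'' is also off, since there is no real divisor to test. The fallback via Proposition~\ref{Okonek-Teleman-Lemma} and $T^3$ is not yet an argument.

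The paper closes this with a dimension count. If $i^{\tau}_{\Lambda}$ were not surjective, there would have to be a $2$-dimensional set of divisors in the component $T$ (one point on each circle) satisfying $\tau(A)=A$ and $\tau(B)+\tau(C)=\iota(B)+\iota(C)$. Inside $T$ this condition forces $B,C$ to be Weierstrass points, which leaves only a $1$-dimensional set.

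A more direct repair fits your framework:
\begin{enumerate}
\item Let $L\subset E$ be a non-real degree-$0$ subbundle with divisor $A+B+C$. Then $\tau^*\overline{L}$ is a different maximal subbundle of $E$. By Lemma~\ref{Lange-Narasimhan-Lemma} it equals $L^{-1}\otimes\Lambda\otimes\Ox(-p)$ for some point $p$ of $A+B+C$.
\item Hence $\Lambda\otimes\Ox(-p)\cong L\otimes\tau^*\overline{L}$ is real. This forces $\Ox(p)$ real, i.e.\ $p\in\Sigma^{\tau}$.
\item The canonical real structure on $L\otimes\tau^*\overline{L}$ has trivial real part on every fixed circle, being spanned by the elements $v\otimes\bar v$. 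Real structures on a line bundle are unique up to isomorphism, so $\Lambda$ has exactly one odd circle, namely the one through $p$.
\end{enumerate}
With three odd circles this is impossible, so every degree-$0$ subbundle of a real $E$ is real. This gives both surjectivity and the generic count of four.
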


\begin{proof}
The real projective bundle $\bP(\E_0)^{\tau}$ has $4$ components, and the image of $i^{\tau}_S$ consists of the component $T \subset \mathrm{Sym}^3(\Sigma)^{\tau}$ diffeomorphic to $T^3$ of divisors $A+B+C$ with $A,B,C$ in distinct components of $\Sigma^{\tau}$. Thus a generic bundle in the image of $i^{\tau}_{\Lambda}$ is in a fiber of $i_{\Lambda}^{\tau}$ of size four, i.e contains four real line subbundles of degree $0$.\\

Suppose that the map $i_{\Lambda}^{\tau}$ is not surjective. Since every element of $M_{\Lambda}(2,1)^{\tau}$ corresponds to a divisor which is projectively real in the sense of Lemma \ref{Projectively-Real-Divisors}, by counting dimensions, there must be a two dimensional subset of the component $T$ consisting of divisors $A+B+C$ which satisfy $\tau(A) = A, \tau(B)+\tau(C) = \iota(B)+\iota(C)$. But for $A,B,C \in \Sigma^{\tau}$ in distinct fixed circles, this condition implies that $B,C$ must be Weierstrass points, hence one can only obtain a $1$ dimensional subset of $T$. 

\end{proof}

 There are four components of $\bP(\E_0)^{\tau}$, which are orientable $S^1$ bundles over $T^2$. Since all six Weierstrass points of $\Sigma$ are real, the fibers of $i^{\tau}_S$ have size $16$, and consist of $4$ points in each component of $\bP(\E_0)^{\tau}$. Since $M_{\Lambda}(2,1)^{\tau}$ is diffeomorphic to $T^3$ by Proposition \ref{Real-Hyperelliptic-Genus2-Types}, one might hope that the map $i^{\tau}_{\Lambda}$ is identified with the trivial cover $\sqcup_4 T^3 \to T^3$. We prove this in section \ref{Lange-Narasimhan-Applications}. In particular, no bundles in the branch locus of the map $i_{\Lambda}$ are real in this case.\\

\noindent \textbf{Curves of type $(1,1),(2,1),(3,0)$ with one odd circle:}

\begin{theorem}
\label{Non-Surjective-Types-Theorem}
Suppose $(\Sigma,\tau)$ is a real hyperelliptic curve of genus $2$ of one of the types  $(1,1),(2,1),(3,0)$, and $(\Lambda,\tau_{\Lambda})$ is a real line bundle on $\Sigma$ of degree $1$ with one odd circle. The map 

\[i^{\tau}_{\Lambda}: \bP(\E_0)^{\tau} \to M_{\Lambda}(2,1)^{\tau}
\]

\noindent is not surjective. In particular, there are real bundles $E \in M_{\Lambda}(2,1)^{\tau}$ with no real line subbundles of degree $0$. Moreover, there are nonempty open sets of real bundles with dense union containing precisely $0,2,4$ real line subbundles of degree $0$.

\end{theorem}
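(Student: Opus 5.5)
The plan is to use the divisor description from Lemma \ref{Atiyah-Theorem}, Lemma \ref{Projectively-Real-Divisors} and Lemma \ref{Real-Divisor-Equal-Real-Extension}. For a generic real $E$, the number of real degree $0$ line subbundles equals the number of \emph{real} divisors among the four projectively equivalent divisors
\[A+B+C,\ A+\iota(B)+\iota(C),\ \iota(A)+B+\iota(C),\ \iota(A)+\iota(B)+C.\]
Here genericity is taken outside the union of circles from Lemma \ref{Fix-Point-Codim-Lemma}. So for each count $0,2,4$ it suffices to exhibit a $3$-dimensional family of projectively real divisors in $\mathrm{Sym}^3(\Sigma)$ whose generic member has exactly that number of real equivalents lying in the image of $i^{\tau}_S$. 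Such a family is automatically open in $M_{\Lambda}(2,1)^{\tau}$. By the discussion after Lemma \ref{Real-Divisor-Equal-Real-Extension}, a real divisor lies in the image of $i^{\tau}_S$ exactly when it has an odd number of points on the odd circle $C_0$ of $\Lambda$ and an even number on every other circle. Non-surjectivity then follows from the family with count $0$.

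\textbf{The case $0$.} The key new input, compared with type $(1,0)$, is the second real structure $\sigma=\iota\circ\tau$, which is antiholomorphic since $\iota,\tau$ commute. In the case $(1,0)$ the configuration with $\sigma(B)=B$, $\sigma(C)=C$ was excluded by inspection of Figure \ref{genus-2-type-(1,0)}. For types $(1,1),(2,1),(3,0)$ I expect $\Sigma^{\sigma}\neq\emptyset$, since $\sigma$ descends to conjugation on $\bP^1$ and is fixed over the complementary real arcs between real Weierstrass points; I would check this from the figures of Section \ref{Genus-2-Figures}.

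Take $A\in\Sigma^{\tau}$ and $B,C\in\Sigma^{\sigma}\setminus W$. Then $\tau(B)+\tau(C)=\iota(B)+\iota(C)$, so by Lemma \ref{Projectively-Real-Divisors} the bundle is real. Now check the four divisors:
\begin{itemize}
\item $A+B+C$ is not real, since $\tau(B)=\iota(B)\neq B$;
\item $A+\iota(B)+\iota(C)$ is not real, since $\tau\iota(B)=B\neq\iota(B)$;
\item $\iota(A)+B+\iota(C)$ and $\iota(A)+\iota(B)+C$ are not real, because they contain one point of $\Sigma^{\sigma}\setminus W$ together with one point of its $\tau$-conjugate-free partner. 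Put differently, the $\tau$-image of the divisor would have to contain $\iota(B)$ or $\iota(C)$, which is impossible for generic choices of $A,B,C$.
\end{itemize}
This family has real dimension $1+1+1=3$. So it gives an open set of real bundles with no real degree $0$ line subbundle, and $i^{\tau}_{\Lambda}$ is not surjective.

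\textbf{The cases $2$ and $4$.} For the count $2$, take $A\in C_0$ and $C=\tau(B)$ with $B$ generic non-real. Then $A+B+C$ and $A+\iota(B)+\iota(C)$ are real, each with exactly one real point, which lies on $C_0$, so both are in the image of $i^{\tau}_S$. The other two equivalents contain $\iota(A)$, and I must check that they are not real. This holds if $\iota(A)\notin\Sigma^{\tau}$, or otherwise by genericity of $B$.

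For the count $4$, take $A,B,C$ real and arranged with the right parity; the simplest choice is all three on $C_0$. I need $\iota$ to preserve $\Sigma^{\tau}$ near these points, which holds on the arcs of $\Sigma^{\tau}$ through real Weierstrass points, where $\iota$ acts as a reflection of the circle. Then all four equivalents are real with the same parity. If $\iota$ does not preserve $C_0$ in some type, I would instead use the real circles meeting $W$, placing two points on an even circle and one on $C_0$.

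\textbf{Main obstacle.} I expect the main difficulty to be the case-by-case topology: identifying $\Sigma^{\sigma}$ and how $\iota$ acts on each component of $\Sigma^{\tau}$ for each of the three types, and, in type $(3,0)$ with one odd circle, making the parity bookkeeping of the count-$4$ family compatible with the single odd circle $C_0$. The density of the union of the three open sets should then follow because their boundaries lie in the codimension-$\ge 1$ loci where points collide with $W$ or cross between $\Sigma^{\tau}$ and $\Sigma^{\sigma}$.
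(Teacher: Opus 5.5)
Your counts $2$ and $4$ are fine: those families consist of real divisors with the right parity on the odd circle, so they lie in the image of $i^{\tau}_S$ and give genuinely real bundles. The count-$0$ step, however, has a genuine gap.

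Lemma \ref{Projectively-Real-Divisors} does not say the bundle is real. A divisor with $\tau(A)=A$ and $\tau(B)+\tau(C)=\iota(B)+\iota(C)$ only gives $\tau^*(\overline{E})\cong E\otimes L_0$ for \emph{some} square root $L_0$ of $\Ox$. So your reduction ``a $3$-dimensional family of projectively real divisors is automatically open in $M_{\Lambda}(2,1)^{\tau}$'' is unjustified exactly when none of the four divisors is real. Such a family is open in the fixed locus of one of the antiholomorphic involutions $E\mapsto\tau^*(\overline{E})\otimes L_0$, and nothing in your argument forces $L_0=\Ox$.

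This is not a formality. Suppose $A$ lies on an even circle of $\Lambda$, which is possible in types $(2,1)$ and $(3,0)$. Then the real divisors $A+B+\iota(B)$ with $B\in\Sigma^{\sigma}$ inside your family have the wrong parity, so none of the sixteen lifts of these projective classes to $M_{\Lambda}(2,1)$ is real. Since $L_0$ is locally constant away from the locus of Lemma \ref{Fix-Point-Codim-Lemma}, the same holds for the nearby count-$0$ members. That part of your family therefore contains no real bundles at all. In type $(3,0)$ this is especially transparent: every two-torsion $M$ is real, so $\tau^*(\overline{E\otimes M})\cong (E\otimes M)\otimes L_0$, and no lift is real once $L_0\neq\Ox$.

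The missing idea is the step the paper flags as requiring ``more care''. First, choose $A$ on the odd circle of $\Lambda$ and $B,C$ on a single component of $\Sigma^{\sigma}$ (the arc where $\tau=\iota$). Then the $3$-dimensional family contains the $2$-dimensional subfamily $C=\tau(B)=\iota(B)$ of real divisors of the correct parity. These come from real extensions, hence from genuinely real bundles.

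Second, show that a suitable lift of the whole family lies in $M_{\Lambda}(2,1)^{\tau}$. The paper argues as follows: after shrinking, every member is fixed by a single involution $L^{\R}_0\otimes\tau$. The fixed loci of two distinct such involutions meet in dimension at most $1$, while the family meets $M_{\Lambda}(2,1)^{\tau}$ in dimension $2$. Equivalently, $L_0$ is locally constant and equals $\Ox$ along the real subfamily.

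With this added, your check that $A+B+C,\ A+\iota(B)+\iota(C),\ \iota(A)+B+\iota(C),\ \iota(A)+\iota(B)+C$ are all non-real for generic members completes the count-$0$ case. The resulting family is the same one the paper uses.
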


\begin{proof}
We may treat all three cases simultaneously by focusing on a slice of the picture around a fixed circle as shown in Figure \ref{NoRealSubbundlesFamily}.
One can produce open sets containing exactly two and four real line subbundles by the same reasoning as for curves of type $(0,1)$. For two subbundles, we pick $A \in \Sigma^{\tau}$ and $B \notin \Sigma^{\tau}$ with $C = \tau(B)$. For four, we consider $A,B,C \in \Sigma^{\tau}$.\newline

Producing open sets of real bundles containing no degree $0$ real line subbundles requires more care. As in Figure \ref{NoRealSubbundlesFamily}, one can produce a three dimensional family of divisors defining projectively real bundles, such that a two dimensional subset consists of real divisors. Label by $U$ the corresponding open set in $M_{\Lambda}(2,1)$. \newline \indent By passing to an open subset of $U$, using Lemma \ref{Fix-Point-Codim-Lemma}, we may assume that all elements $E \in U$ satisfy $\tau(\overline{E}) \cong E \otimes L^{\R}_0$, where $L^{\R}_0$ is a real square root of the trivial bundle $\Ox$. The antiholomorphic involutions  $L^{\R}_{0} \otimes \tau$ on $M_{\Lambda}(2,1)$ all have fixed subsets that are empty or real Lagrangian in $M_{\Lambda}(2,1)$, hence they intersect the fixed point set of $L^{\R}_0$ in a union of circles. In particular, the intersection of the fixed subsets $M_{\Lambda}(2,1)^{L^{\R}_{0}\otimes \tau}, M_{\Lambda}(2,1)^{(L^{\R}_{0})'\otimes \tau}$ for distinct $L^{\R}_{0},(L^{\R}_{0})'$ has dimension $1$. Since $U$ intersects $M_{\Lambda}(2,1)^{\tau}$ in a subset of dimension $2$, it then contains an open subset of $M_{\Lambda}(2,1)^{\tau}$ in the intersection $U \cap M_{\Lambda}(2,1)^{\tau}$.

\end{proof}

\begin{remark}
\label{Wobbly-Remark}
By a  continuity argument, the real bundles in $M_{\Lambda}(2,1)^{\tau}$ with different numbers of real line subbundles of degree $0$ are separated by closed subsets of $M_{\Lambda}(2,1)^{\tau}$ consisting of bundles with strictly less than four line subbundles of degree $0$. These are \textit{wobbly bundles}, as in \cite{Pal2017}. They form the real points of a hypersurface in $M_{\Lambda}(2,1)$ of degree $32$.
\end{remark}

\begin{figure}
\label{NoRealSubbundlesFamily}
\centering
\includegraphics[width=100mm,height = 100mm]{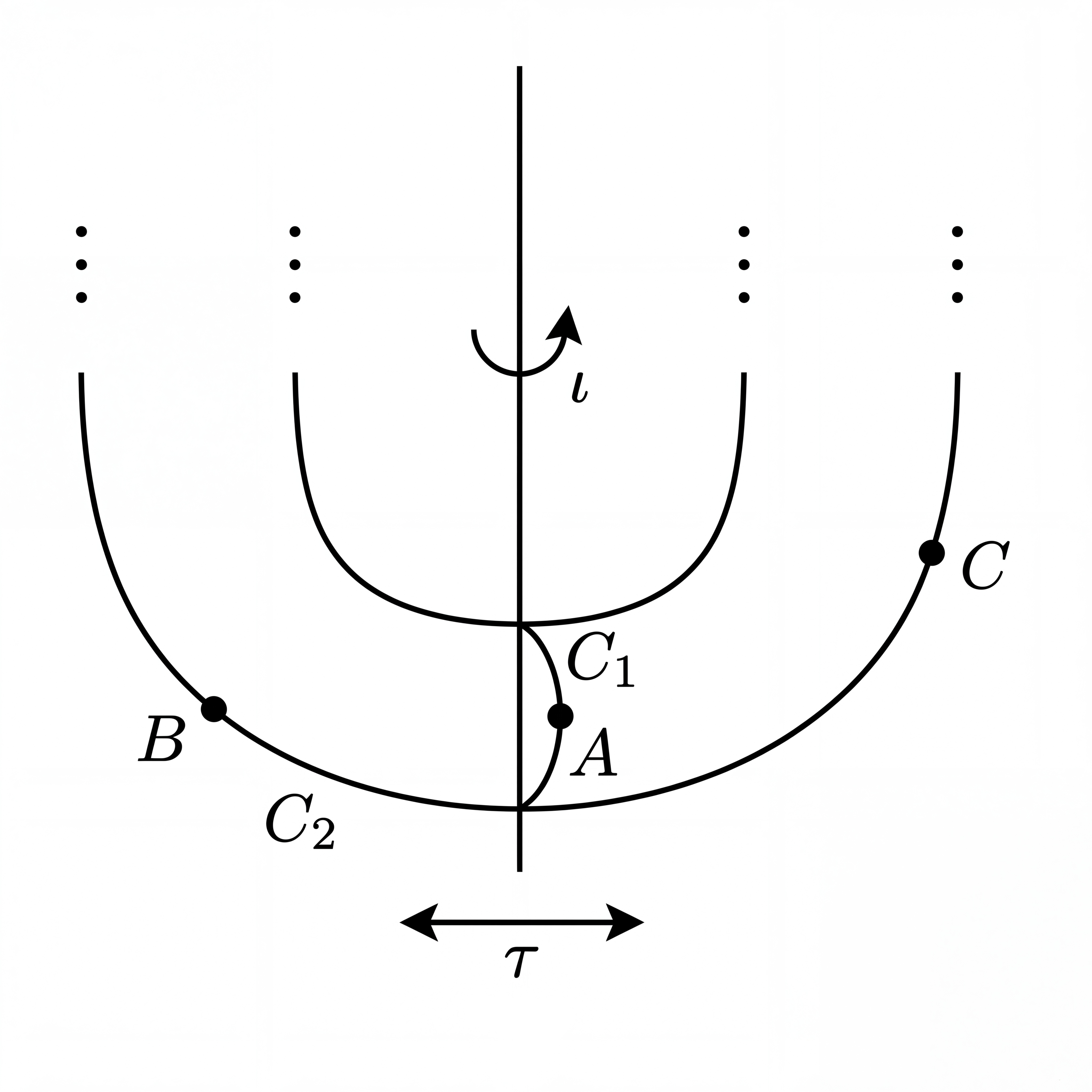}
\caption{The family used in the proof of Theorem \ref{Non-Surjective-Types-Theorem}. One chooses $A$ in the fixed circle $C_1$, and $B,C$ in the arc $C_2$ consisting of points on which $\tau(p) = \iota(p)$. By taking $B = \tau(C)$, we obtain a $2$ dimensional subfamily of real divisors.} 
\end{figure}
\newpage

\section{Applications of work of Lange-Narasimhan}
\label{Lange-Narasimhan-Applications}
We will work now with a real curve $(\Sigma,\tau)$ of arbitrary genus $g = g(\Sigma)$ which has fixed points, and a real point $p \in \Sigma^{\tau}$.  We recall the following result of Lange-Narasimhan \cite{lange1983maximal} (see also \cite{oxbury1998subvarieties}).

\begin{lemma}
\label{Lange-Narasimhan-Lemma}
Let $F$ be an extension 
\[0 \to n_0 \to F \to n_0^{-1} \otimes \mathrm{det} (F) \to 0 \]
where $n_0 \subset F$ is a maximal subbundle. There is a bijection, given by $\Ox(D) = n^{-1}n^{-1}_0 \otimes \mathrm{det}(F)$, between

\begin{enumerate}
\item Line subbundles $n \subset F$ distinct from $n_0$ and of maximal degree
\item Line bundles $\Ox(D)$ with degree $\mathrm{deg}(D) = \mathrm{deg}(F)-2\mathrm{deg}(n_0)$ and such that the extension class of $F$ lies in the linear span of the image of $D$ under the map 
\[\Sigma \xrightarrow[]{} \bP(H^0(Kn_0^{-2} \otimes \mathrm{det}(F))^*)\]
\end{enumerate}
\end{lemma}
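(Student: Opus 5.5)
The plan is to prove the Lange--Narasimhan correspondence directly, by composing with the quotient map and reading extension classes geometrically. Write $M = n_0^{-1}\otimes \det(F)$ for the quotient, so $F$ is given by a class $e \in H^1(n_0 \otimes M^{-1}) = H^1(n_0^2 \det(F)^{-1})$. By Serre duality,
\[ H^1(n_0^2 \det(F)^{-1}) \cong H^0(K n_0^{-2}\det(F))^*, \]
which explains where the target of the map $\Sigma \to \bP(H^0(Kn_0^{-2}\det(F))^*)$ comes from.

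Let $n \subset F$ be a line subbundle with $n \neq n_0$ and of the same (maximal) degree. The composite $n \to F \to M$ is then a nonzero holomorphic map, so it is a section of $n^{-1}M = n^{-1}n_0^{-1}\det(F)$. It vanishes on an effective divisor $D$, which gives $\Ox(D) = n^{-1}n_0^{-1}\det(F)$ and $\deg D = \deg F - 2\deg n_0$ (using $\deg n = \deg n_0$). The subbundle $n$ lifts $M(-D) \hookrightarrow M$ to $F$. By the standard criterion, $M(-D) \to M$ lifts to $F$ exactly when $e$ maps to zero under
\[ H^1(n_0 M^{-1}) \to H^1(n_0 M^{-1}(D)). \]

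Next, dualise the sequence $0 \to n_0M^{-1} \to n_0M^{-1}(D) \to n_0M^{-1}(D)|_D \to 0$. This identifies the kernel of the map above with the image of the connecting map from $H^0(n_0M^{-1}(D)|_D)$. By Serre duality, this kernel is the annihilator of $H^0(K n_0^{-1}M(-D)) \subset H^0(Kn_0^{-1}M)$. That annihilator is exactly the linear span of the image of $D$ under the canonical-type map $\varphi: \Sigma \to \bP(H^0(Kn_0^{-2}\det F)^*)$, with multiplicities handled scheme-theoretically as for osculating spans. So $n$ gives a line bundle $\Ox(D)$ whose divisor has $e$ in the span of $\varphi(D)$.

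Conversely, suppose $D$ is effective of the stated degree and $e$ lies in the span of $\varphi(D)$. Then $M(-D)$ lifts to $F$, and saturating gives a line subbundle $n \supseteq M(-D)$, so $\deg n \geq \deg n_0$. Maximality of $n_0$ forces equality, so $n = M(-D)$ is already saturated and $n \neq n_0$. The two constructions are inverse: $n$ is recovered as the image of the lift, and the lift is unique because $\mathrm{Hom}(M(-D), n_0) = H^0(n_0 M^{-1}(D))$ vanishes. This vanishing holds since the degree $\deg D - (\deg F - 2\deg n_0)$ is $0$, and the bundle is nontrivial unless $n \cong n_0$. If $n \cong n_0$, the difference of two lifts is a constant multiple, and those give distinct subbundles, which I also need to track.

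I expect the main obstacle to be this last point: the bijection is really between subbundles and pairs consisting of $D$ together with a lift. I will handle it by noting that when $n \cong n_0$ the stability/maximality assumption plus $F$ being nonsplit still forces a unique subbundle through the image. A second, more routine, point is the precise identification of the kernel with the span of $\varphi(D)$ when $D$ has repeated points; for this I would cite Lange--Narasimhan's treatment of secant varieties.
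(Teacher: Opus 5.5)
The paper gives no proof of this lemma; it quotes it from Lange--Narasimhan. Your argument is essentially theirs. It uses the lifting criterion for $M(-D)\hookrightarrow M$ and the long exact sequence of $0\to L\to L(D)\to L(D)|_D\to 0$ with $L=n_0M^{-1}$. Serre duality then identifies the kernel of $H^1(L)\to H^1(L(D))$ with the annihilator of $H^0(KL^{-1}(-D))$, which is by definition the scheme-theoretic span of $D$. The forward construction of $D$ and the saturation argument in the converse are correct.

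The gap is the case $M(-D)\cong n_0$, and your proposed fix points the wrong way. Suppose that case could occur, and let $s\colon M(-D)\cong n_0\hookrightarrow F$. Then every $\tilde{\phi}+\lambda s$ with $\lambda\in\C$ lifts $M(-D)\hookrightarrow M$, each is saturated by your own degree count, and their images are pairwise distinct. A single $D$ would then correspond to a one-parameter family of maximal subbundles, and injectivity would fail; maximality cannot single one of them out.

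What is true is that this case never arises.
\begin{itemize}
\item If $\deg F-2\deg n_0=0$, nonsplitness already excludes any $n\neq n_0$, since such an $n$ would give $D=0$ and split the sequence.
\item If $\deg F-2\deg n_0\geq 1$, suppose $n\neq n_0$ is a maximal subbundle with $n\cong n_0$. The two inclusions give generically independent maps $s_1,s_2\colon n_0\to F$. So $n_0^{\oplus 2}\to F$ has determinant a nonzero section of $n_0^{-2}\det F$, which has positive degree. At a zero $p$ of this section some combination $as_1+bs_2$ vanishes, so it factors through $n_0(p)\to F$. Its saturation has degree $>\deg n_0$, contradicting maximality.
\end{itemize}
Hence $h^0(F\otimes n_0^{-1})=1$ and the lift is unique.

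You need the same observation a second time. You have produced a bijection with effective divisors $D$, while the statement is phrased with line bundles $\Ox(D)$. Running the argument with any maximal $n$ in place of $n_0$ shows that distinct maximal subbundles are never isomorphic. So two distinct divisors satisfying the span condition are never linearly equivalent, which is what makes $D\mapsto\Ox(D)$ injective on your set.
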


\subsection{Topological types in genus $2$}
We return now to the case of a real curve $(\Sigma,\tau)$ of genus $2$, and a real line bundle $\Lambda$ of degree $1$. Lemma \ref{Lange-Narasimhan-Lemma} allows us to identify the topological types of real subbundles of a stable real bundle $E \in M_{\Lambda}(2,1)$ 

\begin{theorem}
\label{Topological-Types-Theorem}
Let $(\Sigma,\tau)$ be a real hyperelliptic curve of genus $2$, and $\Lambda$ a real line bundle on $\Sigma$ of degree $1$. The following hold for a stable real bundle $E \in M_{\Lambda}(2,1)$.

\begin{enumerate}
\item If $\Lambda$ has one odd circle, then there are at most two distinct topological isomorphism classes of real line subbundles of degree $0$ of $E$.
\item If $\Lambda$ has three odd circles, there are four distinct topological isomorphism classes of real line subbundles of degree $0$ of $E$.
\end{enumerate}
\end{theorem}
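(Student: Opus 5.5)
Fix a real maximal subbundle $n_0\subset E$ of degree $0$; if there is none, both statements hold vacuously (for (2) there always is one, by the surjectivity result for three odd circles). By Lemma \ref{Lange-Narasimhan-Lemma}, any other maximal line subbundle $n$ satisfies
\[\Ox(D) \cong n^{-1}n_0^{-1}\otimes\Lambda,\]
where $D$ is effective of degree $\deg(F)-2\deg(n_0)=1$. So $D=p$ is a single point, and the extension class of $E$ lies on the image of $p$ under the canonical-type map $\Sigma\to\bP(H^0(K\Lambda n_0^{-2})^*)\cong\bP^1$. When $n$ and $n_0$ are both real, $\Ox(p)\cong n^{-1}n_0^{-1}\Lambda$ is real. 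A real line bundle of degree $1$ with a nonzero section has $h^0=1$ in genus $2$ unless it is special, and degree-$1$ bundles are never special here. Hence $p$ is uniquely determined and $\tau(p)=p$, so $p\in\Sigma^{\tau}$. The topological type of $n$ is then determined by $w_1$ on each circle:
\[w_1(n^{\tau}) = w_1(\Lambda^{\tau}) + w_1(n_0^{\tau}) + w_1(\Ox(p)^{\tau}).\]
The class $w_1(\Ox(p)^{\tau})$ is the indicator of the single circle containing $p$, by the odd-circle description in \S2. The task therefore reduces to tracking which circle $p$ lies in as $n$ ranges over the real maximal subbundles.

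For (1), suppose $\Lambda$ has one odd circle $C_\Lambda$. Since $n$ has degree $0$, $w_1(n^{\tau})$ evaluated on $[\Sigma^\tau]$ must be even, and the same holds for $n_0$. The displayed formula then forces the circle containing $p$ to be $C_\Lambda$, because the sum of the evaluations must stay even. Consequently $w_1(n^{\tau})=w_1(n_0^{\tau})+\mathbf 1_{C_\Lambda}+\mathbf 1_{C_\Lambda}=w_1(n_0^{\tau})$. This holds whenever $C_\Lambda$ is the unique odd circle and $p\in C_\Lambda$. I have to be careful here, because it would give only one class, whereas the claim is "at most two". The discrepancy comes from the ambiguity in choosing the real structure on $\Ox(p)$ and on $n$: a real line bundle with a given underlying holomorphic bundle has two real structures, $\pm\tau_L$, but these have the same $w_1$. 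I therefore expect the honest count to be in terms of the type of the pair. I will restate it as: every real $n$ has $w_1(n^{\tau})\in\{w_1(n_0^{\tau}),\, w_1(n_0^{\tau})+\mathbf 1_{C_\Lambda}+\mathbf 1_{C'}\}$, with $C'$ the circle of $p$. The parity argument then constrains $C'$, giving at most two classes in every case. This parity bookkeeping is the main obstacle.

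For (2), $\Sigma$ has type $(3,0)$ and $\Lambda$ is odd on all three circles. I will use the earlier description: the image of $i_S^\tau$ is the $T^3$ component of divisors $A+B+C$ with one point on each circle. The four real subbundles, which exist generically and everywhere by continuity and the $T^3$ covering picture, correspond to $A+B+C$, $A+\iota B+\iota C$, and the analogous divisors. Applying the formula with $p$ ranging over the three circles gives
\[w_1(n^{\tau}) = w_1(n_0^{\tau}) + \mathbf 1_{C_i} + \mathbf 1_{C_\Lambda\text{-correction}}.\]
This yields $n_0$ together with three bundles whose $w_1$ differ from $w_1(n_0^{\tau})$ by the three distinct even classes $\mathbf 1_{C_j}+\mathbf 1_{C_k}$. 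These four $w_1$ classes are distinct, so there are exactly four topological types. Finally, I will check from Lemma \ref{Lange-Narasimhan-Lemma} that the three points $p$ indeed lie in distinct circles, since the corresponding $\Ox(p)$ differ by $\Ox(A-\iota A)$-type twists.
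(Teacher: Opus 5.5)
\textbf{Gap in part (1).} The parity step is vacuous. Evaluating your formula on $[\Sigma^{\tau}]$ gives $w_1(n^{\tau})[\Sigma^{\tau}] = 1 + 0 + 1 \equiv 0$ whatever circle contains $p$, because $\Lambda$ and $\Ox(p)$ each have exactly one odd circle. So nothing forces $p\in C_\Lambda$, and nothing constrains the circle $C'$ in your restated version either. The $\pm\tau_L$ ambiguity is irrelevant, as you note yourself. If you treat each real subbundle separately, you cannot exclude real subbundles with $p\in C_1$, $p\in C_2$, $p\in C_3$ on a type $(3,0)$ curve with $\Lambda$ odd only on $C_1$. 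That would give three classes: $w_1(n_0^{\tau})$, $w_1(n_0^{\tau})+\mathbf 1_{C_1}+\mathbf 1_{C_2}$ and $w_1(n_0^{\tau})+\mathbf 1_{C_1}+\mathbf 1_{C_3}$.

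\textbf{The missing idea}, which is the paper's argument, is to use all the points $p$ at once. They are the real points of the fiber of the degree $3$ map $\Sigma \to \bP(H^0(K\Lambda n_0^{-2})^*)\cong\bP^1$ over the real point $[E]$. This fiber is a real effective divisor in $|K\Lambda n_0^{-2}|$; in fact it is the Atiyah divisor $A+B+C$ of the extension $0\to n_0\to E$. Since $K$ and $n_0^{-2}$ have no odd circles, $K\Lambda n_0^{-2}$ has the same odd circles as $\Lambda$. Hence this degree $3$ divisor has an odd number of points on each odd circle of $\Lambda$ and an even number on every other circle.
\begin{itemize}
\item With one odd circle, its real points lie in $C_\Lambda$ and at most one other circle $C'$. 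This gives $w_1(n^{\tau})\in\{w_1(n_0^{\tau}),\,w_1(n_0^{\tau})+\mathbf 1_{C_\Lambda}+\mathbf 1_{C'}\}$, so at most two classes.
\item With three odd circles, it has exactly one point on each circle.
\end{itemize}

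\textbf{Consequences for part (2).} The three-odd-circle observation is what actually justifies your final step in (2); the ``$\Ox(A-\iota A)$-type twist'' remark does not. It also shows directly that every real $E$ has three further real subbundles, pairwise distinct and on distinct circles. So you need neither continuity, which cannot rule out collisions of subbundles, nor the $T^3$-covering corollary. Citing that corollary would be circular, since the paper deduces it from this theorem.

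\textbf{A minor point.} $\Ox(p)$ is special in genus $2$, since $h^0(K(-p))=1$. The uniqueness of $p$ holds instead because $h^0(\Ox(p))=1$ on any curve of positive genus.
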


\begin{proof}
Assume $E$ has a real line subbundle $L$ of degree $0$. Applying Lemma \ref{Lange-Narasimhan-Lemma}, the other line subbundles of $E$ of degree $0$ are in bijection with the points in $\Sigma$ mapping to the class of $E$ under the degree $3$ map
\[|KL^{-2}\otimes \Lambda|: \Sigma \to \bP^1 \]
A point $p \in \Sigma$ mapping to $E$ corresponds to a subbundle $L \otimes \Lambda^{-1} \otimes \Ox(p)$, which is real if and only if $p \in \Sigma^{\tau}$. If $E$ is real, its extension class lies in the real locus $(\bP^1)^{\tau} \cong \R \bP^1$ and so the real line subbundles of $E$ distinct from $L$ are parametrized by real points in the real divisor obtained by pulling back $E \in \bP^1$ to $\Sigma$ under $|KL^{-2}\otimes \Lambda|$.\\

When $\Lambda$ has one odd circle, the real divisors of $KL^{-2}\otimes \Lambda$ have real points in atmost two components of $\Sigma^{\tau}$, wheres they have one real point in each component of $\Sigma^{\tau}$ when $\Lambda$ has three odd circles. This completes the proof by counting odd circles.
\end{proof}

\begin{corollary}
Suppose $(\Sigma,\tau)$ is a real hyperelliptic curve of genus $2$ of type $(3,0)$, and $(\Lambda,\tau_{\Lambda})$ is a real line bundle on $\Sigma$ of degree $1$ with three odd circles. The map 

\[i^{\tau}_{\Lambda}: \bP(\E_0)^{\tau} \to M_{\Lambda}(2,1)^{\tau}
\]

\noindent from the real extension bundle $\bP(\E_0)^{\tau}$ is surjective, and is identified with the trivial $4$ sheeted covering map of the $3$-torus
\[\sqcup_4 T^3 \to T^3\]
\end{corollary}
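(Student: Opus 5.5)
Surjectivity of $i^{\tau}_{\Lambda}$ in this case is already given by the earlier theorem for curves of type $(3,0)$ with three odd circles. What remains is to show that $i^{\tau}_{\Lambda}$ is a trivial $4$-sheeted covering, i.e. a local diffeomorphism whose fibers all have exactly four points, one in each of the four components of $\bP(\E_0)^{\tau}$, and that each component maps diffeomorphically onto $M_{\Lambda}(2,1)^{\tau}\cong T^3$.

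\textbf{Step 1: every fiber has four points, one in each component.} Let $E\in M_{\Lambda}(2,1)^{\tau}$ be arbitrary. By surjectivity, $E$ contains a real line subbundle $L$ of degree $0$. Part (2) of Theorem \ref{Topological-Types-Theorem} then gives four real line subbundles of degree $0$ with pairwise distinct topological types. To see this concretely, follow the proof of that theorem. The real divisor of $KL^{-2}\otimes\Lambda$ pulled back from the class of $E$ has exactly one real point in each of the three components of $\Sigma^{\tau}$. Each such point $p$ gives the real subbundle $L\otimes\Lambda^{-1}\otimes\Ox(p)$, and the three points are distinct because they lie in distinct circles.

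Together with $L$ this gives four distinct real subbundles, and by Lemma \ref{Lange-Narasimhan-Lemma} there are no others. Since the map $L'\mapsto L'$ from the fiber of $i^{\tau}_{\Lambda}$ to $J^{\tau}_0$ records the component, and $J^{\tau}_0$ has four components indexed by topological type, each fiber contains exactly one point in each component of $\bP(\E_0)^{\tau}$. Hence each component $P_j$ maps bijectively onto $M_{\Lambda}(2,1)^{\tau}$.

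\textbf{Step 2: each $P_j\to M_{\Lambda}(2,1)^{\tau}$ is a diffeomorphism.} A continuous bijection from a compact space to a Hausdorff space is a homeomorphism, so it suffices to show that $i^{\tau}_{\Lambda}$ is an immersion at real points. For this, use that $i_{\Lambda}$ is a finite degree-$4$ map between smooth threefolds, so it is étale exactly where the fiber has four distinct points. Since the fiber over every real $E$ consists of four distinct points, none of which can be a ramification point (otherwise the fiber count would drop), the map $i_{\Lambda}$ is étale over real points. Its real part is therefore a local diffeomorphism. Combined with bijectivity, each $P_j\to T^3$ is a diffeomorphism, and $i^{\tau}_{\Lambda}$ is the trivial cover $\sqcup_4T^3\to T^3$. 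As a byproduct, the branch locus of $i_{\Lambda}$ (the wobbly bundles) has no real points.

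\textbf{Main obstacle.} The delicate step is the étaleness claim: it requires that the length-$4$ fiber scheme of $i_{\Lambda}$ is reduced whenever the four underlying line subbundles are distinct. I would first try to justify this via Lemma \ref{Lange-Narasimhan-Lemma}. Multiplicities in the fiber correspond to ramification of $|KL^{-2}\otimes\Lambda|:\Sigma\to\bP^1$ over the class of $E$, and a real divisor with one simple real point in each of three circles is reduced, so no ramification occurs. A lesser point to check is that the four components of $\bP(\E_0)^{\tau}$ really correspond bijectively to the four topological types in Theorem \ref{Topological-Types-Theorem}.
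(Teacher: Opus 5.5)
Your proposal is correct and follows the paper's argument: the paper also uses Theorem~\ref{Topological-Types-Theorem} to show that each of the four components of $\bP(\E_0)^{\tau}$ contains exactly one point of every fiber, so $i^{\tau}_{\Lambda}$ is a bijection, hence a homeomorphism, from each component onto $M_{\Lambda}(2,1)^{\tau}\cong T^3$. Your extra \'etaleness step is sound, because a finite surjective map between smooth threefolds is flat and so has fibers of constant length $4$; this upgrades the homeomorphism to a diffeomorphism, and since it makes the image of each component open and closed in the connected $T^3$, it even gives surjectivity without relying on the earlier theorem.
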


\begin{proof}
As in section \ref{Case-by-Case-Subbundles}, there are four components of $\bP(\E_0)^{\tau}$. By Lemma \ref{Topological-Types-Theorem}, the map $i_{\Lambda}^{\tau}$ has degree $1$ on each component, so is a homeomorphism on each component. We conclude by recalling that $M_{\Lambda}(2,1)^{\tau}\cong T^3$ by Proposition \ref{Real-Hyperelliptic-Genus2-Types}. 
\end{proof}

\subsection{Higher genus results}

 \begin{theorem}
 Let $(\Sigma,\tau)$ be a real curve of genus $g\geq3$ and $\Lambda,\beta$ real line bundles of degree $1,0$ on $\Sigma$. The following hold 
 \begin{enumerate}
 \item A real bundle $E \in M_{\Lambda}(2,1)$ which contains a maximal line subbundle $n \subset F$ of degree $0$ in fact contains a real line subbundle of maximal degree. Generically, $E$ has the unique maximal line subbundle $n$ of degree $0$ and thus $n$ is real.
 \item For $g>3$, the same statement is true for line subbundles $n \subset F$ of degree $-1$ of a real bundle $F \in M_{\Lambda}(2,0)$.
\end{enumerate}
\end{theorem}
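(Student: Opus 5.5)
The plan is to combine the finiteness of maximal subbundles with the Lange--Narasimhan description, Lemma \ref{Lange-Narasimhan-Lemma}, and a parity argument for the action of the real structure. Let $E$ be real, with real structure $\tau_E$ lifting $\tau$, so that $\tau^*(\overline{E})\cong E$. Conjugate pullback $n \mapsto \tau^*(\overline{n})$ sends a line subbundle of $E$ to a line subbundle of $\tau^*(\overline{E}) \cong E$ of the same degree. Concretely, $\tau_E(n)\subset E$ is again a holomorphic line subbundle. Hence $\tau$ induces an involution on the set $S$ of maximal line subbundles of degree $0$. By Lemma 4.7 of \cite{pal2016moduli}, $S$ is finite. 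A subbundle $n\in S$ with $\tau_E(n)=n$ inherits the restricted real structure and is therefore a real line subbundle. So it suffices to show that $|S|$ is odd: an involution of a set of odd cardinality always has a fixed point.

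To control $|S|$, fix $n_0\in S$ and apply Lemma \ref{Lange-Narasimhan-Lemma} with $\deg D = 1$. The other elements of $S$ then correspond to points $p\in\Sigma$ whose image under
\[\phi:\Sigma\to \bP(H^0(Kn_0^{-2}\Lambda)^*)\]
coincides with the extension class $e$. Since $\deg(Kn_0^{-2}\Lambda)=2g-1$ and $h^0 = g$, this is a curve in $\bP^{g-1}$. For $g\ge 3$ a general point of $\bP^{g-1}$ does not lie on it, so generically $S=\{n_0\}$. This settles the ``generically unique'' statement, and such an $n_0$ is fixed by the involution, hence real. In general, $|S| = 1 + \#\phi^{-1}(e)$ counted set-theoretically. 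The map $\phi$ is birational onto its image unless $\Sigma$ has special structure, so $\phi^{-1}(e)$ has at most one point away from singular points of the image. Since a single point $p$ gives two elements $n_0, n$ of $S$, each fixed or swapped as a pair, I will argue as follows. If $|S|=1$, we are done. If $|S|=2$, then the involution either fixes both elements or swaps them.

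The main obstacle is the swapping case and cases with larger $|S|$ (singular points of $\phi(\Sigma)$). For the swap $\tau(n_0)=n$ I would use the Lange--Narasimhan identification $n_0\otimes n\cong \Lambda(-p)$. Reality of $n_0 n$ then forces $p$ to be real, i.e.\ $p\in\Sigma^\tau$. I would then try to show that the induced map $n_0\oplus n\to E$ gives $E$ as an elementary modification at the real point $p$. If $\tau_E$ swaps the two factors, one should be able to produce a $\tau_E$-invariant subbundle of degree $0$ inside the span of $n_0$ and $n$, but this contradicts finiteness unless such a subbundle coincides with one of them. I expect this step to need genuine care, possibly via the real structure on the fiber at $p$. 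Failing that, I would first attempt a degree-parity count using the formula that the number of maximal subbundles is a secant-count of $\phi(\Sigma)$, showing the count is odd generically. I would then extend to all $E$ by closedness of the locus of real subbundles in the compact family $\bP(\E_0)^\tau$.

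Part (2) runs identically. For $F\in M_\Lambda(2,0)$ with maximal subbundle $n_0$ of degree $-1$, we have $\deg D=2$. The map goes into $\bP^{g}$, since $h^0(Kn_0^{-2}\Lambda)=g+1$, and the secant variety of the curve has dimension $3<g$ exactly when $g>3$. Hence a generic extension class lies on no secant line, $n_0$ is the unique maximal subbundle, and the same involution argument applies.
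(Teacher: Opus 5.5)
\textbf{The gap is the case where $\tau_E$ swaps two maximal subbundles, and it cannot be closed.}

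Your parity reduction applies only when $|S|$ is odd, but $|S|=2$ occurs. In the swap case $\tau_E(n_0)=n\neq n_0$, the contradiction you propose has nothing to work with. The subbundles $n_0$ and $n$ span $E$, and the only degree-$0$ subbundles of $E$ are the elements of $S$, so no new invariant subbundle can be produced. In fact swaps genuinely occur. Here is an example:
\begin{itemize}
\item[(a)] Take $q\in\Sigma^\tau$ and $\Lambda=\Ox(q)$, which has one odd circle.
\item[(b)] Take $n_0\in\mathrm{Pic}^0(\Sigma)$ with $\tau^*(\overline{n_0})\cong n_0^{-1}$ and $n_0^2\not\cong\Ox(y-x)$ for all $x,y\in\Sigma$. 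Such $n_0$ exist: the anti-real degree-$0$ bundles contain a real $g$-dimensional torus, while the excluded set has real dimension at most $2<g$.
\item[(c)] Let $E$ be the elementary modification of $n_0\oplus n_0^{-1}$ at $q$ in a direction off both axes. Equivalently, $E$ is the nonsplit extension of $n_0^{-1}(q)$ by $n_0$ with class $\phi(q)$.
\end{itemize}
Then $E$ is stable with $\det E=\Lambda$, and $\tau^*(\overline{E})\cong E$ because conjugation reproduces the same data. Suppose $n'$ were a degree-$0$ subbundle other than $n_0$ and $n_0^{-1}$. Your identity $n_0\otimes n\cong\Lambda(-p)$, applied to each of the two, gives $n'\cong n_0^{-1}(q-x)\cong n_0(q-y)$. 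Hence $n_0^2\cong\Ox(y-x)$, which was excluded. So $S=\{n_0,n_0^{-1}\}$. Since $\tau_E(n_0)\cong n_0^{-1}\not\cong n_0$, the involution $\tau_E$ swaps the two. Thus $E$ is a real bundle with a degree-$0$ subbundle but no real one, contrary to part (1).

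\textbf{The same example defeats your fallback.} The real bundles admitting a real degree-$0$ subbundle are exactly the image of the compact space $\bP(\E_0)^\tau$, so they form a closed set, and $E$ lies outside it. Hence that image is not dense in $M^\tau$. These examples form a family of real dimension about $g$, well below the $2g-1$ of the main part, so a genericity argument never sees them. This density claim (``dense image \dots\ all of $M^\tau$ by compactness'') is exactly the step the paper's own proof rests on. So the defect is in the statement as written, not only in your route. Part (2), which you treat ``identically'', inherits the same unresolved swap case.

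\textbf{What your swap analysis does prove is a corrected statement.} For a swapped pair, $n_0\otimes\tau^*(\overline{n_0})$ is a norm and therefore has no odd circles. By contrast, $\Lambda(-p)$ with $p\in\Sigma^\tau$ has no odd circles only when $\Lambda$ has exactly one odd circle and $p$ lies on it. So if $\Lambda$ has at least three odd circles, no swaps can occur, every element of $S$ is real, and part (1) holds. This is consistent with the paper's genus-$2$ results for three versus one odd circle.
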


\begin{proof} We consider the extension space  $\bP(H^0(Kn^{-2} \otimes \mathrm{det}(E))^*)$,  $\bP(H^0(Kn^{-2} \otimes \mathrm{det}(F))^*)$ in both cases. When $n$ has degree $0$, the projectivized extension space has dimension $g-1$, and maximal line subbundles of $E$ distinct from $n$ are determined by the points in $\Sigma$ mapping to the extension class of $E$ under the map from the linear system $|Kn^{-2}\otimes \mathrm{det}(E)|$. When $g\geq 3$, the image of $\Sigma$ has positive codimension, so generically $n$ is the only subbundle of $E$ of degree $0$. \\

Let $M \subset M_{\Lambda}(2,1)$ be the subvariety of bundles containing a maximal degree $0$ line subbundle, then $\tau$ preserves $M$ and the image $\bP(\E_{0})^{\tau} \to M^{\tau}$ of the projectived real extension bundle defined in Section \ref{Real-Extension-Lines-Section} has dense image. It is thus all of $M^{\tau}$ by compactness.\\

When $n$ has degree $-1$, the projectivized extension space $\bP(H^0(Kn^{-2} \otimes \mathrm{det}(F))^*)$ has dimension $g$, and line subbundles of degree $-1$ distinct from $n$ are parametrized by divisors $p_1+p_2$, such that the class of $F$ lies in the linear span of $p_1+p_2$ under the map. For $g > 3$, the secant variety of $\Sigma$ in $\bP(H^0(Kn^{-2} \otimes \mathrm{det}(F))^*)$ has positive codimension, so we obtain the desired result.

\end{proof}

It is in fact true in any genus $g\geq 2$ that the maximal number of subbundles of degree $0$ of an element $E \in M_{\Lambda}(2,1)$ is four, and three on non-hyperelliptic curves (see \cite{oxbury1998subvarieties}, Corollary $2.2$). One can adapt the construction of such bundles in \cite{oxbury1998subvarieties} to the real setting, to produce bundles where all three or four maximal line subbundles are real. On a genus $3$ curve, a bundle $F \in M_{\beta}(2,0)$ generically has $8$ line subbundles of degree $-1$ (see \cite{oxbury1998subvarieties}). We will prove that it is possible for no real line subbundles to exist in this case in future work.

\bibliography{references}
\bibliographystyle{alpha}

\end{document}